\definecolor{sepia}{rgb}{0,0.3,0.2}
\newtheorem{prop}{Proposition}
\newtheorem{corol}{Corollary}
\newtheorem{rmk}{Remark}
\newtheorem{lm}{Lemma}
\newtheorem{assumption}{Assumption}
\definecolor{colo}{rgb}{0,0,0.5}
\DeclareMathOperator{\V}{Var}
\DeclareMathOperator{\cov}{cov}
\newcommand{\R}{\mathbb{R}}
\newcommand{\N}{\mathbb{N}}
\newcommand{\E}{\mathbb{E}}
\newcommand{\PP}{\mathbb{P}}
\newcommand{\be}{\begin{eqnarray}}
\newcommand{\ee}{\end{eqnarray}}
\newcommand{\beq}{\begin{eqnarray*}}
\newcommand{\eeq}{\end{eqnarray*}}
\begin{document}
\author[1]{Baptiste Broto}

\author[2]{Fran\c{c}ois Bachoc }

\author[1]{Marine Depecker}

\author[3]{Jean-Marc Martinez}

\affil[1]{CEA, LIST, Universit\'e Paris-Saclay, F-91120, Palaiseau, France}
\affil[2]{Institut de Math\'ematiques de Toulouse, Universit\'e Paul Sabatier, F-31062 Toulouse, France}
\affil[3]{CEA, DES/DM2S, Universit\'e Paris-Saclay, F-91191 Gif-sur-Yvette, France}

\title{Gaussian linear approximation for the estimation of the Shapley effects}
\date\today

\maketitle

\begin{abstract}
  In this paper, we address the estimation of the sensitivity indices called "Shapley effects". These sensitivity indices enable to handle dependent input variables. The Shapley effects are generally difficult to estimate, but they are easily computable in the Gaussian linear framework. The aim of this work is to use the values of the Shapley effects in an approximated Gaussian linear framework as estimators of the true Shapley effects corresponding to a non-linear model. First, we assume that the input variables are Gaussian with small variances. We provide rates of convergence of the estimated Shapley effects to the true Shapley effects. Then, we focus on the case where the inputs are given by an non-Gaussian empirical mean. We prove that, under some mild assumptions, when the number of terms in the empirical mean increases, the difference between the true Shapley effects and the estimated Shapley effects given by the Gaussian linear approximation converges to 0. Our theoretical results are supported by numerical studies, showing that the Gaussian linear approximation is accurate and enables to decrease the computational time significantly.
\end{abstract}

\section{Introduction}

Sensitivity analysis, and particularly sensitivity indices, have became important tools in applied sciences. The aim of sensitivity indices is to quantify the impact of the input variables $X_1,\cdots,X_p$ on the output $Y=f(X_1,\cdots,X_p)$ of a model $f$. This information improves the interpretability of the model. In global sensitivity analysis, the input variables are assumed to be random variables. In this framework, the Sobol indices \cite{sobol_sensitivity_1993} were the first suggested indices to be applicable to general classes of models. Nevertheless, one of the most important limitations of these indices is the assumption of independence between the input variables. Hence, many variants of the Sobol indices have been suggested for dependent input variables \cite{mara_variance-based_2012,chastaing_indices_2013,mara_non-parametric_2015,chastaing2012generalized}.

Recently, Owen defined new sensitivity indices in \cite{owen_sobol_2014} called "Shapley effects". These sensitivity indices have many advantages over the Sobol indices for dependent inputs \cite{iooss2019shapley}. For general models, \cite{song_shapley_2016} suggested an estimator of the Shapley effects. However, this estimator requires to be able to generate samples with the conditional distributions of the input variables. Then, a consistent estimator has been suggested in \cite{broto_variance_2018}, requiring only a sample of the inputs-output. However, in practice, this estimator requires a large sample and is very costly in terms of computational time.\bigskip

Let us now consider the framework when the distribution of $X_1,\cdots,X_p$ is Gaussian and $f$ is linear, that we call the Gaussian linear framework. This framework is considered relatively commonly (see for example \cite{kawano_evaluation_2006,hammer_approximate_2011,rosti_linear_2004,clouvel_quantification_2019}), since the unknown function $f(X_1,\cdots,X_p)$ can be approximated by its linear
approximation around $\E(X)$. The Gaussian linear setting is highly beneficial, since the theoretical values of the Shapley effects can be computed explicitly \cite{owen_shapley_2017,iooss2019shapley,broto_sensitivity_2019,broto_block-diagonal_2019}. These values
depend on the covariance matrix  of the inputs and on
the coefficients of the linear model. An algorithm enabling to compute these values is implemented as the function  "ShapleyLinearGaussian" in the R package {\verb sensitivity } \cite{iooss_sensitivity_2020}. It is shown in \cite{broto_sensitivity_2019} that this computation is almost instantaneous when the number $p$ of input variables is smaller than 15, but becomes more difficult for $p\geq 25$. However, "ShapleyLinearGaussian" uses the possible block-diagonal structure of the covariance matrix to reduce the dimension, thereby reducing the computation cost \cite{broto_sensitivity_2019}.\\

The aim of this paper is to use the Shapley values computed from a Gaussian linear model as estimates of the true Shapley values corresponding to a non-linear model $f$. We provide convergence guarantees, as the Gaussian linear approximation is more and more accurate. We address the two following settings.

First, we assume that $X=(X_1,\ldots,X_p)$ is a Gaussian vector with variances decreasing to 0, and $f$ is not linear. We give the rate of convergence of the difference between the true Shapley effects and the ones given by the first-order Taylor polynomial of $f$ at the mean of $X$. To estimate the Shapley effects in a broader setting, we also provide the rate of convergence when the Taylor polynomial is unknown and the linear approximation is given by a finite difference approximation and a linear regression. To strengthen these theoretical results, we compare the three linear approximations on simulated data.

Second, we consider the case where the input vector is non-Gaussian and given by an empirical mean and the model $f$ is non-linear. We address the estimators of the Shapley values obtained by treating the input vector as Gaussian and the model as linear. We show that, as the number of summands goes to infinity, the estimators of the Shapley values converge to the true Shapley values, corresponding to the non-Gaussian input vector and the non-linear model. Then, we treat the particular case when the Shapley effects evaluate the impact of the individual estimation errors on a global estimation error. In numerical experiments, we compare the estimator of the Shapley effects given by the Gaussian linear framework with the estimator of the Shapley effects given by the general procedure of \cite{broto_variance_2018}, to the advantage of the former.

The rest of the article is organized as follows. In Section \ref{section_shapley}, we recall the definition of the Shapley effects and we detail the particular form of the Gaussian linear framework. 
Section \ref{section_approx_linear} provides the rates of convergence for Gaussian inputs and non-linear models. In Section \ref{section_approx_gaussian}, we address the case where the inputs are given by an empirical mean and $f$ is non-linear. The conclusions are given in Section \ref{section_conclu}. All the proofs are postponed to the supplementary material.

\section{The Shapley effects}\label{section_shapley}

Let $(X_i)_{i\in [1:p]}$ be random input variables on $\R^p$ and let $Y=f(X)$ be the real random output variable which is squared integrable . We assume that $\V(Y)\neq 0$. Here, $f:\R^p \to \R$ can be a numerical simulation model \cite{santner_design_2003}.

If $u\subset[1:p]$ and $x=(x_i)_{i \in [1:p]}\in \R^p$, we write $x_u:=(x_i)_{i\in u}$.
We can define the Shapley effects as in \cite{owen_sobol_2014}, where for each input  variable $X_i$, the Shapley effect is:
\begin{equation}\label{Shapley}
\eta_i(X,f):=\frac{1}{p\V(Y)}\sum_{u\subset -i}  \begin{pmatrix}
p-1\\ |u|
\end{pmatrix} ^{-1}\left(\V(\E(Y|X_{u\cup \{i\}}))-\V(\E(Y|X_u)) \right)
\end{equation}
where $-i$ is the set $[1:p]\setminus \{i\}$. We let $\eta(X,f)$ be the vector of dimension $p$ composed of $\eta_1(X,f),...,\eta_p(X,f)$. One can see in Equation \eqref{Shapley} that adding $X_i$ to $X_u$ changes the conditional expectation of $Y$, and increases the variability of this conditional expectation. The Shapley effect $\eta_i(X,f)$ is large when, on average, the variance of this conditional expectation increases significantly when $X_i$ is observed. Thus, a large Shapley effect $\eta_i(X,f)$ corresponds to an important input variable $X_i$.

The Shapley effects have interesting properties for global sensitivity analysis. Indeed, there is only one Shapley effect for each variable (contrary to the Sobol indices). Moreover, the sum of all the Shapley effects is equal to $1$ (see \cite{owen_sobol_2014}) and all these values lie in $[0,1]$ even with dependent inputs. This is very convenient for the interpretation of these sensitivity indices.\\

An estimator of the Shapley effects has been suggested in \cite{song_shapley_2016}. It is implemented in the R package {\verb sensitivity } as the function "shapleyPermRand". However, it requires to be able to generate samples with the conditional distributions of the inputs, which limits the application framework. \cite{broto_variance_2018} suggested another estimator which requires only a sample of the inputs-output. This estimator uses nearest-neighbour methods to mimic the generation of samples from these conditional distributions. It is implemented in the R package {\verb sensitivity } as the function "shapleySubsetMC". However, in practice, this estimator requires a large sample and is very costly in terms of computational time.

Consider now the case where $X \sim \mathcal{N}(\mu,\Sigma)$, with $\Sigma \in S_p^{++}(\R)$ and where the model is linear, that is $f:x\longmapsto \beta_0+\beta^T x$, for a fixed $\beta_0\in \R$ and a fixed vector $\beta$.
In this framework, the sensitivity indices can be calculated explicitly \cite{owen_shapley_2017}:
\begin{eqnarray}\label{eq_varianceShapley}
\eta_i(X,f):=\frac{1}{p\V(Y)}\sum_{u\subset -i}  \begin{pmatrix}
p-1\\ |u|
\end{pmatrix}^{-1}\left(\V(Y|X_u)-\V(Y|X_{u\cup \{i\}}) \right)
\end{eqnarray}
with
\begin{equation}\label{eq_V}
\V(Y|X_u)=\V(\beta_{-u}^T X_{-u}|X_u)=\beta_{-u}^T(\Sigma_{-u,-u}-\Sigma_{-u,u}\Sigma_{u,u}^{-1}\Sigma_{u,-u})\beta_{-u}
\end{equation}
where $\Gamma_{v,w}:=(\Gamma_{i,j})_{i\in v, j\in w}$. 
Thus, in the Gaussian linear framework, the Shapley effects are functions of the parameters $\beta$ and $\Sigma$. The Gaussian linear framework is thus very beneficial from an estimation point of view, because in general one needs to estimate conditional moments of the form $\V(\E(Y|X_v))$ for $v \subset [1,p]$, using nearest-neighbour methods, while in the Gaussian linear framework, only standard matrix vector operations are required.

\section{Approximation of a model by a linear model}\label{section_approx_linear}

\subsection{Introduction and notation}
To model uncertain physical values, it can be convenient to consider them as a Gaussian vector.  For example, the international libraries \cite{ENDF,JEFF,JENDL} on real data from the field of nuclear safety provide the average and covariance matrix of the input variables, so it is natural to model them with the Gaussian distribution. Hence, to quantify the impact of the uncertainties of the physical inputs of a model on a quantity of interest, it is commonly the case to estimate the Shapley effects of Gaussian inputs. The model $f$ is in general non-linear and the estimation procedures dedicated to non-linear models \cite{song_shapley_2016,broto_variance_2018} are typically computationally costly, with an accuracy that can be sensitive to the specific situation. Nevertheless, when the uncertainty on the inputs become small, the input vector converges to its mean $\mu$, and a linear approximation of the model at $\mu$ seems more and more appropriate.

To formalize this idea, let $X^{\{n\}}\sim\mathcal{N}(\mu^{\{n\} },\Sigma^{\{n\} })$ be the input vector, with a sequence of mean vectors $(\mu^{\{n\}})$ and a sequence of covariance matrices $(\Sigma^{\{n\}})$. The index $n$ can represent for instance the number of measures of an uncertain input, in which case the covariance matrix $\Sigma^{\{n\}}$ will decrease with $n$. 

\begin{assumption}\label{assum_approx_linear}
The covariance matrix $\Sigma^{\{n\} }$ decreases to $0$ such that the eigenvalues of $a^{\{n\}} \Sigma^{\{n\}}$ are lower-bounded and upper-bounded in $\R_+^*$, with $a^{\{n\}} \underset{n \to +\infty}{\longrightarrow}+\infty$. Moreover, $\mu^{\{n\}} \underset{n \to +\infty}{\longrightarrow}\mu$, where $\mu$ is a fixed vector.
\end{assumption}

In Assumption 1, the condition on the eigenvalues of $a^{\{n\}} \Sigma^{\{n\}}$ means that the correlation matrix obtained from $\Sigma^{\{n\}}$ can not get close to a singular matrix. This condition is necessary in our proofs.

If $j\in \N$ and if $f$ is $\mathcal{C}^{j}$ at $\mu^{\{n\}}$, we will write $f_j^{\{n\}}(x)=\frac{1}{j!} D^jf(\mu^{\{n\}})(x-\mu^{\{n\}})$ (where $D^j(\mu^{\{n\}})(z)$ is the image of $(z,z,\cdots,z)\in (\R^p)^j$ through the multilinear function $D^jf(\mu^{\{n\}})$, which gathers all the partial derivatives of order $j$ of $f$ at $\mu^{\{n\}}$) and $R_j^{\{n\}}(x)=f(x)-\sum_{l=0}^j f_{l}^{\{n\}}(x)$ the remainder of the $j$-th order Taylor approximation of $f$ at $\mu^{\{n\}}$. In particular, $f_1^{\{n\}}(x)=Df(\mu^{\{n\}})(x-\mu^{\{n\}})$, where $Df=D^1f$. We identify the linear function $Df(\mu^{\{n\}})$ with the corresponding row gradient vector of size $1\times p$ and the bilinear function $D^2f(\mu^{\{n\}})$ with the corresponding Hessian matrix of size $p\times p$. We also write $f_1(x)=Df(\mu)(x-\mu)$.

Finally, we assume that the function $f$ is subpolynomial, that is, there exist $k \in \N$ and $C>0$ such that,
$$
\forall x \in \R^p,\; |f(x)| \leq C(1+\|x\|^k).
$$

\subsection{Theoretical results}

\subsubsection{First-order Taylor polynomial}

First, we study the asymptotic difference between the Shapley effects given by the true model $f$ and the ones given by the first-order Taylor polynomial of $f$ at $\mu^{\{n\}}$. Remark that adding a constant to the function does not affect the values of the Shapley effects. Thus, the Shapley effects $\eta(X^{\{n\}},f(\mu^{\{n\}})+f_1^{\{n\}})$ given by the first-order Taylor polynomial of $f$ at $\mu^{\{n\}}$ are equal to $\eta(X^{\{n\}},f_1^{\{n\}})$. In the next proposition, we show that approximating the true Shapley effects of the non-linear $f$ by the Shapley effects of the linear approximation $f_1^{\{n\}}$ yields a vanishing error of order $1 / a^{\{n\}}$ as $n \to \infty$.

\begin{prop}\label{prop_linear1}
Assume that $X^{\{n\}}\sim \mathcal{N}(\mu^{\{n\}},\Sigma^{\{n\}})$, Assumption \ref{assum_approx_linear} holds and $f$ is subpolynomial and $\mathcal{C}^3$ on a neighbourhood of $\mu$ and $Df(\mu)\neq 0$. Then,
$$
\|\eta(X^{\{n\}}, f)-\eta(X^{\{n\}},f_1^{\{n\}})\| = O\left(\frac{1}{a^{\{n\}}}\right).
$$
\end{prop}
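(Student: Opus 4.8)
The plan is to write $Y=f(\mu^{\{n\}})+L^{\{n\}}+R^{\{n\}}$, where $L^{\{n\}}:=f_1^{\{n\}}(X^{\{n\}})=Df(\mu^{\{n\}})(X^{\{n\}}-\mu^{\{n\}})$ is the linear part and $R^{\{n\}}:=R_1^{\{n\}}(X^{\{n\}})$ the remainder. Since adding a constant does not change the Shapley effects, I may drop $f(\mu^{\{n\}})$. Writing each Shapley effect as a ratio $\eta_i=V_i/(p\,\V(Y))$, where $V_i$ is the fixed linear combination (with the binomial weights of \eqref{Shapley}) of the conditional-variance terms $\V(\E(Y|X_v))$, $v\subset[1:p]$, the whole problem reduces to comparing these terms computed from $f$ with the corresponding ones computed from $f_1^{\{n\}}$, and then controlling the resulting ratio.

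For each $v$ I would expand
\[
\V(\E(Y|X_v))=\V(\E(L^{\{n\}}|X_v))+2\cov\bigl(\E(L^{\{n\}}|X_v),\E(R^{\{n\}}|X_v)\bigr)+\V(\E(R^{\{n\}}|X_v)),
\]
and estimate the three pieces via Gaussian moment bounds. Under Assumption \ref{assum_approx_linear} the eigenvalues of $\Sigma^{\{n\}}$ are of order $1/a^{\{n\}}$, so $\E\|X^{\{n\}}-\mu^{\{n\}}\|^{2m}=O((1/a^{\{n\}})^m)$. Hence $\V(\E(L^{\{n\}}|X_v))$ is of exact order $1/a^{\{n\}}$ (here $Df(\mu)\neq 0$ enters, ensuring $\V(L^{\{n\}})=Df(\mu^{\{n\}})\Sigma^{\{n\}}Df(\mu^{\{n\}})^T\asymp 1/a^{\{n\}}$ and in particular bounded below), while the quadratic leading part of the remainder gives $\V(\E(R^{\{n\}}|X_v))=O((1/a^{\{n\}})^2)$. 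The decisive step is the cross term: a crude Cauchy--Schwarz bound yields only $O((1/a^{\{n\}})^{3/2})$, which is not enough. Instead I would use the second-order expansion $R^{\{n\}}=Q^{\{n\}}+S^{\{n\}}$ with $Q^{\{n\}}=\tfrac12(X^{\{n\}}-\mu^{\{n\}})^TD^2f(\mu^{\{n\}})(X^{\{n\}}-\mu^{\{n\}})$ quadratic and $S^{\{n\}}=O(\|X^{\{n\}}-\mu^{\{n\}}\|^3)$. Conditionally on $X_v$, the term $\E(L^{\{n\}}|X_v)$ is an odd (linear) function of the centered Gaussian $X_v-\mu_v^{\{n\}}$, whereas $\E(Q^{\{n\}}|X_v)$ is an even (quadratic plus constant) function, so their covariance vanishes because odd central moments of a Gaussian are zero. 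Only the cubic part survives, and Cauchy--Schwarz together with $\V(S^{\{n\}})=O((1/a^{\{n\}})^3)$ gives $\cov(\E(L^{\{n\}}|X_v),\E(R^{\{n\}}|X_v))=O((1/a^{\{n\}})^2)$.

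Summing over $v$ yields $V_i(X^{\{n\}},f)=V_i(X^{\{n\}},f_1^{\{n\}})+O((1/a^{\{n\}})^2)$ and, by the same parity argument applied unconditionally, $\V(Y)=\V(L^{\{n\}})+O((1/a^{\{n\}})^2)$ with $\V(L^{\{n\}})\asymp 1/a^{\{n\}}$. Plugging these into the ratio gives, with $\varepsilon_N,\varepsilon_D=O((1/a^{\{n\}})^2)$,
\[
\eta_i(X^{\{n\}},f)-\eta_i(X^{\{n\}},f_1^{\{n\}})=\frac{1}{p}\cdot\frac{\varepsilon_N\,\V(L^{\{n\}})-V_i(X^{\{n\}},f_1^{\{n\}})\,\varepsilon_D}{\bigl(\V(L^{\{n\}})+\varepsilon_D\bigr)\V(L^{\{n\}})}.
\]
Since $V_i(X^{\{n\}},f_1^{\{n\}})$ and $\V(L^{\{n\}})$ are of order $1/a^{\{n\}}$ (the latter bounded below), the numerator is $O((1/a^{\{n\}})^3)$ and the denominator $\asymp (1/a^{\{n\}})^2$, so the difference is $O(1/a^{\{n\}})$, as claimed.

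The main obstacle is not this algebra but making the moment and Taylor estimates rigorous when $f$ is only $\mathcal{C}^3$ on a neighbourhood of $\mu$, while $X^{\{n\}}$ has full support. I would handle this by splitting every expectation into the event $\{\|X^{\{n\}}-\mu\|\le\delta\}$, on which the Taylor bounds with controlled remainders apply, and its complement, on which the subpolynomial bound $|f(x)|\le C(1+\|x\|^k)$ combined with the Gaussian concentration of $X^{\{n\}}$ around $\mu$ makes the contribution super-polynomially small in $a^{\{n\}}$, hence negligible against every power of $1/a^{\{n\}}$. Carrying this localization uniformly through all subsets $v\subset[1:p]$ and through all the conditional expectations is the technical heart of the argument; the vanishing of the leading cross term by Gaussian parity is what upgrades the rate from $1/\sqrt{a^{\{n\}}}$ to $1/a^{\{n\}}$.
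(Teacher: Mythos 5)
Your proposal is correct and follows essentially the same route as the paper: the decisive cancellation of the covariance between the conditional expectations of the linear and quadratic Taylor terms by Gaussian parity is exactly the paper's Lemma \ref{lm_cov_f1_f2}, the Cauchy--Schwarz treatment of the cubic remainder and the localization via the subpolynomial bound reproduce Lemmas \ref{lm_f2_f3} and \ref{lm_bound_var_cov}, and your final ratio computation is Lemma \ref{lm_Su1}. The only cosmetic difference is that the paper rescales everything by $\sqrt{a^{\{n\}}}$ so the leading quantities are $O(1)$, whereas you carry the unnormalized orders $1/a^{\{n\}}$ throughout.
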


We remark that, when $f$ is a computer model, it can be the case that the gradient vector is available. First, the computer model can already provide them, by means of the Adjoint Sensitivity Method \cite{cacuci2003sensitivity}. Second, automatic differentiation methods can be used on the source file of the code and yield a differentiated code \cite{hascoet2004tapenade}.

\begin{rmk}\label{rmk}
The rate $O(1\slash a^{\{n\}})$ is the best rate that we can reach under the assumptions of Proposition \ref{prop_linear1}. Indeed, letting $X^{\{n\}}=(X^{\{n\}}_1,X^{\{n\}}_2)\sim \mathcal{N}(0,\frac{1}{a^{\{n\}}} I_2)$ and $Y^{\{n\}}=f(X^{\{n\}})=X_1^{\{n\}}+X_2^{\{n\}2}$, we have  $\eta_1(X^{\{n\}},f_1^{\{n\}})=1$ and $\eta_2(X^{\{n\}},f_1^{\{n\}}) =0$. Moreover, $\eta_1(X^{\{n\}},f)= \frac{a^{\{n\}}}{a^{\{n\}}+2}$ and $
\eta_2(X^{\{n\}},f)=\frac{2}{a^{\{n\}}+2}$.
Thus, the rate of the difference between $\eta(X^{\{n\}}, f)$ and $\eta(X^{\{n\}},f_1^{\{n\}})$ is exactly $1\slash a^{\{n\}}$.
\end{rmk}

In Proposition \ref{prop_linear1}, we bound the difference between the Shapley effects given by $f$ and the ones given by the first-order Taylor polynomial of $f$. Moreover, when the matrix $a^{\{n\}}\Sigma^{\{n\}}$ converges, Proposition \ref{prop_linear2} shows that the Shapley effects given by the Taylor polynomial converge.

\begin{prop}\label{prop_linear2}
Assume that $X^{\{n\}}\sim \mathcal{N}(\mu^{\{n\}},\Sigma^{\{n\}})$, Assumption \ref{assum_approx_linear} holds, $f$ is $\mathcal{C}^1$ on a neighbourhood of $\mu$, $Df(\mu)\neq 0$ and $a^{\{n\}} \Sigma^{\{n\}} \underset{n \to +\infty}{\longrightarrow}\Sigma \in S_p^{++}(\R)$. Then, if $X^*\sim \mathcal{N}(\mu,\Sigma)$,
$$ 
\|\eta(X^{\{n\}},f_1^{\{n\}})-\eta(X^*,f_1)\|=O(\| \mu^{\{n\}}-\mu\|)+O(\|a^{\{n\} }\Sigma^{\{n\}}-\Sigma\| ).
$$
\end{prop}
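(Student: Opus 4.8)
The plan is to exploit the explicit formula for the Shapley effects in the Gaussian linear framework (Equations \eqref{eq_varianceShapley} and \eqref{eq_V}) and to show that $\eta(X^{\{n\}},f_1^{\{n\}})$ is a continuous function of the pair $(\text{gradient}, \text{covariance matrix})$, evaluated at a point that converges to $(Df(\mu), \Sigma)$. The key observation is that the Shapley effects are scale-invariant in the covariance matrix: multiplying $\Sigma^{\{n\}}$ by the scalar $a^{\{n\}}$ multiplies every conditional variance $\V(Y|X_u)$ in \eqref{eq_V} by $a^{\{n\}}$, and since each $\eta_i$ is a ratio of such variances (the denominator $p\V(Y)$ carries the same factor), the common factor cancels. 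Hence $\eta(X^{\{n\}},f_1^{\{n\}})=\eta(X^{\{n\}}, g^{\{n\}})$ where $g^{\{n\}}$ is the linear model with gradient $Df(\mu^{\{n\}})$ and the input has covariance $a^{\{n\}}\Sigma^{\{n\}}$. This reduces the problem to comparing $\eta$ at $(Df(\mu^{\{n\}}), a^{\{n\}}\Sigma^{\{n\}})$ with $\eta$ at $(Df(\mu), \Sigma)$.

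First I would write $\eta_i$ explicitly as a rational function $\Phi_i(\beta, \Gamma)$ of the gradient vector $\beta$ and a positive-definite matrix $\Gamma$, by substituting \eqref{eq_V} into \eqref{eq_varianceShapley}. Each term $\beta_{-u}^T(\Gamma_{-u,-u}-\Gamma_{-u,u}\Gamma_{u,u}^{-1}\Gamma_{u,-u})\beta_{-u}$ is a smooth function of $(\beta,\Gamma)$ on the open set where $\Gamma\in S_p^{++}(\R)$, because matrix inversion is smooth on the set of invertible matrices (the Schur complements $\Gamma_{u,u}$ stay invertible since $\Gamma$ is positive definite). The denominator $\V(Y)=\beta^T\Gamma\beta$ is nonzero precisely when $\beta\neq 0$, which is guaranteed in the limit by the hypothesis $Df(\mu)\neq 0$. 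Therefore $\Phi_i$ is a $\mathcal{C}^1$ (indeed smooth) function in a neighbourhood of the limit point $(Df(\mu),\Sigma)$.

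Next I would invoke the convergence of the arguments. Since $f$ is $\mathcal{C}^1$ near $\mu$ and $\mu^{\{n\}}\to\mu$, continuity of the gradient gives $Df(\mu^{\{n\}})\to Df(\mu)$, with the error controlled by $\|\mu^{\{n\}}-\mu\|$ (this is where the $O(\|\mu^{\{n\}}-\mu\|)$ term originates, via the mean value inequality applied to the $\mathcal{C}^1$ map $x\mapsto Df(x)$, or simply from uniform continuity on a compact neighbourhood). By hypothesis, $a^{\{n\}}\Sigma^{\{n\}}\to\Sigma$, contributing the $O(\|a^{\{n\}}\Sigma^{\{n\}}-\Sigma\|)$ term. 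A first-order Taylor expansion of the smooth map $\Phi=(\Phi_1,\dots,\Phi_p)$ around $(Df(\mu),\Sigma)$ then yields
$$
\|\eta(X^{\{n\}},f_1^{\{n\}})-\eta(X^*,f_1)\|=\|\Phi(Df(\mu^{\{n\}}),a^{\{n\}}\Sigma^{\{n\}})-\Phi(Df(\mu),\Sigma)\|\leq L\big(\|Df(\mu^{\{n\}})-Df(\mu)\|+\|a^{\{n\}}\Sigma^{\{n\}}-\Sigma\|\big)
$$
for some local Lipschitz constant $L$, which gives exactly the stated bound.

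The main obstacle I anticipate is not the convergence bookkeeping but verifying the scale-invariance cleanly and confirming that the arguments stay inside the region where $\Phi$ is smooth along the whole sequence, not merely in the limit. Specifically I must ensure that $a^{\{n\}}\Sigma^{\{n\}}$ remains in a compact subset of $S_p^{++}(\R)$ on which $\Phi$ has a uniform Lipschitz constant: this is delivered by Assumption \ref{assum_approx_linear}, whose eigenvalue bounds keep $a^{\{n\}}\Sigma^{\{n\}}$ bounded away from singular matrices, so that all the Schur-complement inverses $(a^{\{n\}}\Sigma^{\{n\}})_{u,u}^{-1}$ are uniformly bounded. Combined with $Df(\mu)\neq 0$ keeping the denominators bounded away from $0$, this secures a single constant $L$ valid for all large $n$, completing the argument.
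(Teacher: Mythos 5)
Your argument is correct and rests on the same foundation as the paper's proof (the explicit Gaussian--linear formulas), but it is packaged differently. The paper proceeds through the closed Sobol indices: it writes the difference of variances as $Df(\mu^{\{n\}})\Gamma^{\{n\}}Df(\mu^{\{n\}})^T - Df(\mu)\Sigma Df(\mu)^T$ with $\Gamma^{\{n\}}=a^{\{n\}}\Sigma^{\{n\}}$, telescopes it into three terms each of order $O(\|Df(\mu^{\{n\}})-Df(\mu)\|)$ or $O(\|\Gamma^{\{n\}}-\Sigma\|)$, does the analogous computation for $\V(\E(\cdot\mid X_u^{\{n\}}))$ via the law of total variance and the Schur-complement formula, and concludes by linearity of $\eta$ in the $S_u^{cl}$. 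Your normalization by scale invariance of the Shapley effects plays exactly the role of the paper's multiplication by $\sqrt{a^{\{n\}}}$, and your appeal to local Lipschitz continuity of the rational map $\Phi$ on $\{\beta\neq 0\}\times S_p^{++}(\R)$ is a soft, compactness-based substitute for the paper's explicit telescoping; both deliver the same bound, yours with less computation, the paper's with explicit constants. One caveat, which the paper shares: to convert $\|Df(\mu^{\{n\}})-Df(\mu)\|$ into $O(\|\mu^{\{n\}}-\mu\|)$ you need $Df$ to be locally Lipschitz, which does not follow from the stated $\mathcal{C}^1$ hypothesis; your first suggested fix (mean value inequality applied to $Df$) implicitly upgrades the hypothesis to $\mathcal{C}^2$, exactly as the paper's proof does when it invokes the continuity of $D^2f$, whereas your fallback via uniform continuity would only yield $o(1)$ rather than the claimed rate.
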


Proposition \ref{prop_linear1} shows that replacing $f$ by its first-order Taylor polynomial $f_1^{\{n\}}$ does not impact significantly the Shapley effects when the input variances are small. Thus, the knowledge of $f_1^{\{n\}}$ would enable us to use the explicit expression \eqref{eq_V} of the Gaussian linear case, and for instance the  function "ShapleyLinearGaussian" of the package {\verb sensitivity }, to estimate the true Shapley effects $\eta(X^{\{n\}},f)$. However, in practice, the first-order Taylor polynomial $f_1^{\{n\}}$ is not always available, except for instance in situations described above. Thus, one may be interested in replacing the true first-order Taylor polynomial $f_1^{\{n\}}$ by an approximation. We will study two such approximations given by finite difference and linear regression.

\subsubsection{Finite difference approximation}\label{section_linear_approx_finite}

For $h=(h_1,\cdots, h_p)\in (\R_+^*)^p$ and writing $(e_1,\cdots,e_p)$ the canonical basis of $\R^p$, let 
\begin{equation}\label{eq_diff_finie}
\widehat{D}_h f(x):=\left( \frac{f\left( x+e_1 h_1\right)-f\left( x-e_1 h_1\right)}{2h_1}, \cdots,\frac{f\left( x+e_p h_p\right)-f\left( x-e_p h_p\right)}{2h_p}\right),
\end{equation}
be the approximation of the differential of $f$ at $x$ with the steps $h_1,\cdots,h_p$. 
If $(h^{\{n\}})_n$ is a sequence of $(\R_+^*)^p$ converging to $0$, let
$$
\tilde{f}_{1,h^{\{n\}}}(x):=\tilde{f}_{1,h^{\{n\}},\mu^{\{n\}}}(x):=\widehat{D}_{h^{\{n\}}} f(\mu^{\{n\}})(x-\mu^{\{n\}})
$$
be the approximation of the first-order Taylor polynomial of $f-f(\mu^{\{n\}})$ at $\mu^{\{n\}}$ with the steps $h_1,\cdots,h_p$. The next proposition ensures that the Shapley effects computed from the true Taylor polynomial and the approximated one are close, for small steps.

\begin{prop}\label{prop_linear_Dh}
Under the assumptions of Proposition \ref{prop_linear1}, we have
$$
\|\eta(X^{\{n\}},f_1^{\{n\}})-\eta(X^{\{n\}},\tilde{f}_{1,h^{\{n\}}}^{\{n\}}) \|=O\left( \|h^{\{n\}}\|^2\right).
$$
\end{prop}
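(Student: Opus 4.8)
The plan is to reduce the comparison of the two sets of Shapley effects to a perturbation analysis of the explicit Gaussian-linear formula \eqref{eq_varianceShapley}--\eqref{eq_V}, exploiting two structural features of that formula: its invariance under rescaling of the covariance matrix, and the fact that after such a rescaling everything lives on a fixed compact set of well-conditioned matrices and nonzero gradients, on which the formula is smooth. Both $f_1^{\{n\}}$ and $\tilde{f}_{1,h^{\{n\}}}^{\{n\}}$ are linear functions of the Gaussian input, so their Shapley effects are given by the same map, and the whole difference is controlled by the discrepancy between the two coefficient vectors.

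First I would record the finite-difference error. Writing $\beta^{\{n\}}:=Df(\mu^{\{n\}})^T$ and $\widehat{\beta}^{\{n\}}:=\widehat{D}_{h^{\{n\}}}f(\mu^{\{n\}})^T$, so that $f_1^{\{n\}}(x)=(\beta^{\{n\}})^T(x-\mu^{\{n\}})$ and $\tilde{f}_{1,h^{\{n\}}}^{\{n\}}(x)=(\widehat{\beta}^{\{n\}})^T(x-\mu^{\{n\}})$, I use that $f$ is $\mathcal{C}^3$ on a neighbourhood of $\mu$ and that $\mu^{\{n\}}\to\mu$, $h^{\{n\}}\to 0$. Hence for large $n$ the points $\mu^{\{n\}}\pm e_i h_i^{\{n\}}$ lie in a fixed compact neighbourhood of $\mu$ on which the third partial derivatives of $f$ are bounded. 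A componentwise third-order Taylor expansion of the central difference \eqref{eq_diff_finie} then cancels the first and second order terms and leaves $\widehat{\beta}_i^{\{n\}}-\beta_i^{\{n\}}=O((h_i^{\{n\}})^2)$, so that $\|\widehat{\beta}^{\{n\}}-\beta^{\{n\}}\|=O(\|h^{\{n\}}\|^2)$.

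Next I would use the homogeneity of the formula in the covariance matrix. Let $\Psi(\beta,\Gamma)$ denote the vector of Shapley effects produced by \eqref{eq_varianceShapley}--\eqref{eq_V} for a coefficient vector $\beta$ and a covariance matrix $\Gamma$. Inspecting \eqref{eq_V}, under $\Gamma\mapsto c\Gamma$ the Schur complement $\Gamma_{-u,-u}-\Gamma_{-u,u}\Gamma_{u,u}^{-1}\Gamma_{u,-u}$ scales by $c$, so each conditional variance is homogeneous of degree one in $\Gamma$; since every $\eta_i$ in \eqref{eq_varianceShapley} is a ratio of such terms, $\Psi(\beta,c\Gamma)=\Psi(\beta,\Gamma)$ for all $c>0$. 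Taking $c=a^{\{n\}}$ and setting $\tilde{\Sigma}^{\{n\}}:=a^{\{n\}}\Sigma^{\{n\}}$, I obtain $\eta(X^{\{n\}},f_1^{\{n\}})=\Psi(\beta^{\{n\}},\tilde{\Sigma}^{\{n\}})$ and $\eta(X^{\{n\}},\tilde{f}_{1,h^{\{n\}}}^{\{n\}})=\Psi(\widehat{\beta}^{\{n\}},\tilde{\Sigma}^{\{n\}})$. By Assumption \ref{assum_approx_linear} the eigenvalues of $\tilde{\Sigma}^{\{n\}}$ stay in a fixed interval $[m,M]\subset\R_+^*$, and since $\beta^{\{n\}}\to Df(\mu)^T\neq 0$ while $\widehat{\beta}^{\{n\}}-\beta^{\{n\}}\to 0$, both $\beta^{\{n\}}$ and $\widehat{\beta}^{\{n\}}$ eventually lie in a compact set $K$ of vectors bounded away from $0$. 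On the compact set $K\times\{\Gamma:\,mI\preceq\Gamma\preceq MI\}$ the map $\Psi$ is $\mathcal{C}^1$ (its only possible singularities come from $\V(Y)=\beta^T\Gamma\beta=0$ or a non-invertible $\Gamma_{u,u}$, neither of which occurs for $\Gamma$ positive definite and $\beta\neq 0$), so its differential in $\beta$ is bounded there by some constant $L$. As the segment $[\beta^{\{n\}},\widehat{\beta}^{\{n\}}]$ stays in this set for large $n$, the mean value inequality gives
\[
\|\Psi(\beta^{\{n\}},\tilde{\Sigma}^{\{n\}})-\Psi(\widehat{\beta}^{\{n\}},\tilde{\Sigma}^{\{n\}})\|\leq L\,\|\beta^{\{n\}}-\widehat{\beta}^{\{n\}}\|=O(\|h^{\{n\}}\|^2),
\]
which is the claim.

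The main obstacle is making the Lipschitz constant genuinely uniform in $n$: this requires confining $(\beta^{\{n\}},\tilde{\Sigma}^{\{n\}})$ to one compact set on which $\Psi$ is smooth and stays away from its singular locus. This is exactly what the lower bound $Df(\mu)\neq 0$ and the two-sided eigenvalue control in Assumption \ref{assum_approx_linear} are there to provide; without the rescaling by $a^{\{n\}}$, the covariance $\Sigma^{\{n\}}\to 0$ would push $(\beta^{\{n\}},\Sigma^{\{n\}})$ toward the degenerate locus and the bound would fail. By contrast, the finite-difference estimate of the first step is routine once the $\mathcal{C}^3$ regularity places $\mu^{\{n\}}\pm e_i h_i^{\{n\}}$ inside a neighbourhood with bounded third derivatives.
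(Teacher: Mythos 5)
Your proof is correct and follows the same overall decomposition as the paper's: first the third-order Taylor expansion of the central difference, giving $\|\widehat{D}_{h^{\{n\}}}f(\mu^{\{n\}})-Df(\mu^{\{n\}})\|=O(\|h^{\{n\}}\|^2)$ (this is exactly Lemma \ref{lm_linear_Dh1}), and then a stability estimate for the Shapley effects of a Gaussian linear model with respect to the coefficient vector. Where you differ is in how the second step is carried out. The paper proves a quantitative lemma (Lemma \ref{lm_linear_Dh2}): for any two linear forms $l_1,l_2$, one has $|\V(\E(l_1(X^{\{n\}})|X_u^{\{n\}}))-\V(\E(l_2(X^{\{n\}})|X_u^{\{n\}}))|\leq (C_{\sup}/a^{\{n\}})\|l_1-l_2\|$, obtained from the explicit Gaussian conditioning map $x_u\mapsto \E(X^{\{n\}}|X_u^{\{n\}}=x_u)$; it then propagates this bound through the closed Sobol indices, whose denominators are lower-bounded after rescaling by $a^{\{n\}}$, and finishes by linearity of the Shapley effects in the Sobol indices. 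You instead observe that the Shapley map $\Psi(\beta,\Gamma)$ is invariant under $\Gamma\mapsto c\Gamma$, rescale to $\tilde{\Sigma}^{\{n\}}=a^{\{n\}}\Sigma^{\{n\}}$, and invoke smoothness of $\Psi$ on a compact set of well-conditioned matrices and coefficient vectors bounded away from zero, together with the mean value inequality along the segment $[\beta^{\{n\}},\widehat{\beta}^{\{n\}}]$. Your route is softer: it avoids both the explicit conditional-variance computation and the passage through the Sobol indices, at the cost of not exhibiting constants, whereas the paper's explicit Lemma \ref{lm_linear_Dh2} is reused verbatim in the proof of Proposition \ref{prop_linear_regression}. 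Both arguments consume the same non-degeneracy inputs (the two-sided eigenvalue control in Assumption \ref{assum_approx_linear} and $Df(\mu)\neq 0$) at the same point, so the proofs are equivalent in substance; I see no gap in yours.
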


Then, the next corollary extends Propositions \ref{prop_linear1} and \ref{prop_linear2} to the approximated Taylor polynomial based on finite differences.

\begin{corol}\label{corol_linear_Dh1}
Under the assumptions of Proposition \ref{prop_linear1}, and if $\| h^{\{n\}}\|\leq  \frac{C_{\sup}}{\sqrt{a^{\{n\}}}}$ (for example, choosing $h_i^{\{n\}}:=\sqrt{\V(X_i^{\{n\}})}$, the standard deviation of $X_i^{\{n\}}$), we have
$$
\|\eta(X^{\{n\}},f)-\eta(X^{\{n\}},\tilde{f}_{1,h^{\{n\}}}^{\{n\}})\|=O(\frac{1}{a^{\{n\}}}).
$$
Moreover, if $a^{\{n\}} \Sigma^{\{n\} } \underset{n \to +\infty}{\longrightarrow}\Sigma$, then, letting $X^*\sim \mathcal{N}(\mu,\Sigma)$,
$$
\| \eta(X^{\{n\}},\tilde{f}_{1,h^{\{n\}}}^{\{n\}}) - \eta(X^*,f_1)\|=O(\| \mu^{\{n\}}-\mu\|)+O(\| a^{\{n\}}\Sigma^{\{n\}}-\Sigma\| )+ O\left(\frac{1}{a^{\{n\}}}\right).
$$
\end{corol}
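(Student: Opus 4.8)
The plan is to derive both bounds directly from the three results already established, using the triangle inequality on the Shapley-effect vectors together with the hypothesis $\|h^{\{n\}}\|\leq C_{\sup}/\sqrt{a^{\{n\}}}$ to absorb the finite-difference error into the rate $1/a^{\{n\}}$. No new analytic estimate is needed: the corollary is a bookkeeping combination of Propositions \ref{prop_linear1}, \ref{prop_linear2} and \ref{prop_linear_Dh}, and the only genuinely substantive observation is that the step bound is exactly what makes the finite-differencing error negligible relative to the intrinsic Taylor error.

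For the first bound, I would split with the triangle inequality,
\begin{equation*}
\|\eta(X^{\{n\}},f)-\eta(X^{\{n\}},\tilde{f}_{1,h^{\{n\}}}^{\{n\}})\| \leq \|\eta(X^{\{n\}},f)-\eta(X^{\{n\}},f_1^{\{n\}})\| + \|\eta(X^{\{n\}},f_1^{\{n\}})-\eta(X^{\{n\}},\tilde{f}_{1,h^{\{n\}}}^{\{n\}})\|.
\end{equation*}
The first term is $O(1/a^{\{n\}})$ by Proposition \ref{prop_linear1}, and the second is $O(\|h^{\{n\}}\|^2)$ by Proposition \ref{prop_linear_Dh}. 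Since $\|h^{\{n\}}\|^2\leq C_{\sup}^2/a^{\{n\}}$, the second term is also $O(1/a^{\{n\}})$, so the sum is $O(1/a^{\{n\}})$, which is the first claim.

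For the second bound I would split the other way,
\begin{equation*}
\|\eta(X^{\{n\}},\tilde{f}_{1,h^{\{n\}}}^{\{n\}})-\eta(X^*,f_1)\| \leq \|\eta(X^{\{n\}},\tilde{f}_{1,h^{\{n\}}}^{\{n\}})-\eta(X^{\{n\}},f_1^{\{n\}})\| + \|\eta(X^{\{n\}},f_1^{\{n\}})-\eta(X^*,f_1)\|.
\end{equation*}
The first term is again $O(\|h^{\{n\}}\|^2)=O(1/a^{\{n\}})$ by Proposition \ref{prop_linear_Dh} and the step bound, while the second is $O(\|\mu^{\{n\}}-\mu\|)+O(\|a^{\{n\}}\Sigma^{\{n\}}-\Sigma\|)$ by Proposition \ref{prop_linear2}, whose hypotheses ($f$ of class $\mathcal{C}^1$ near $\mu$, $Df(\mu)\neq 0$, and $a^{\{n\}}\Sigma^{\{n\}}\to\Sigma\in S_p^{++}(\R)$) all hold here. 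Summing the three contributions yields the stated rate.

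Finally, to justify the concrete choice $h_i^{\{n\}}=\sqrt{\V(X_i^{\{n\}})}$, I would note that $\V(X_i^{\{n\}})=\Sigma_{i,i}^{\{n\}}$ and that, by Assumption \ref{assum_approx_linear}, each diagonal entry of the positive-definite matrix $a^{\{n\}}\Sigma^{\{n\}}$ is at most its largest eigenvalue, hence bounded by some constant $C$. Therefore $\Sigma_{i,i}^{\{n\}}\leq C/a^{\{n\}}$ for every $i$, giving $\|h^{\{n\}}\|\leq\sqrt{pC}/\sqrt{a^{\{n\}}}$, which is of the required form. There is no real obstacle here; the argument is entirely routine, and the single point worth emphasizing is that the condition $\|h^{\{n\}}\|\leq C_{\sup}/\sqrt{a^{\{n\}}}$ ensures the $O(\|h^{\{n\}}\|^2)$ finite-difference error never dominates the $O(1/a^{\{n\}})$ Taylor error, so that replacing the exact gradient by finite differences is free of charge in the final convergence rate.
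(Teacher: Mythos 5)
Your proof is correct and matches the paper's (implicit) argument: the corollary is indeed obtained by the triangle inequality from Propositions \ref{prop_linear1}, \ref{prop_linear2} and \ref{prop_linear_Dh}, with the step bound $\|h^{\{n\}}\|\leq C_{\sup}/\sqrt{a^{\{n\}}}$ turning the $O(\|h^{\{n\}}\|^2)$ finite-difference error into $O(1/a^{\{n\}})$. Your added justification that $h_i^{\{n\}}=\sqrt{\V(X_i^{\{n\}})}$ satisfies this bound (via the upper bound on the eigenvalues of $a^{\{n\}}\Sigma^{\{n\}}$ from Assumption \ref{assum_approx_linear}) is also correct.
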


\subsubsection{Linear regression}

For $n\in \N$ and $N\in \N^*$, let $(X^{\{n\}(l)})_{l\in [1:N]}$ be an i.i.d. sample of $X^{\{n\}}$ of size $N$ and assume that we compute the image of $f$ at each sample point, obtaining the vector $Y^{\{n\}}$. Then, we can approximate $f$ with a linear regression, by least squares. In this case, we estimate the coefficients of the linear regression by the vector:
$$
\begin{pmatrix}
\widehat{\beta}_0^{\{n\} }\\
\widehat{\beta}^{\{n\}}
\end{pmatrix}
=\left(A^{\{n\}T}A^{\{n\}}\right)^{-1} A^{\{n\}T} Y^{\{n\}},
$$
where $A^{\{n\}}\in \mathcal{M}_{N,p+1}(\R)$ is such that, for all $j\in [1:N]$, the $j$-th line of $A^{\{n\}}$ is $(1\; X^{\{n\}(j)T})$.
The function $f$ is then approximated by 
$$
\widehat{f}_{lin}^{\{n\}(N)}: x \longmapsto \widehat{\beta}^{\{n\}}_0 + \widehat{\beta}^{\{n\}T} x.
$$
Remark that the linear function $\widehat{f}_{lin}^{\{n\}(N)}$ is random and so, the deduced Shapley effects $\eta(X^{\{n\}},\widehat{f}_{lin}^{\{n\}(N)})$ are random variables.
The next proposition and corollary correspond to Proposition \ref{prop_linear_Dh} and Corollary \ref{corol_linear_Dh1}, for the linear regression approximation of $f$.

\begin{prop}\label{prop_linear_regression}
Under Assumption \ref{assum_approx_linear}, if $f$ is $\mathcal{C}^2$ on a neighbourhood of $\mu$ with $Df(\mu)\neq 0$, there exist $C_{\inf}>0$, $C_{\sup}^{(1)}<+\infty$ and $C_{\sup}^{(2)}<+\infty$ such that, with probability at least $1 -C_{\sup}^{(1)}\exp(-C_{\inf} N)$, we have
$$
\|\eta(X^{\{n\}},f_1^{\{n\}})-\eta(X^{\{n\}},\widehat{f}_{lin}^{\{n\}(N)})\|\leq C_{\sup}^{(2)} \frac{1}{\sqrt{a^{\{n\}}}} .
$$
\end{prop}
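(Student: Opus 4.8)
The plan is to reduce the statement to a comparison between the two slope vectors and then to control the deviation of the regression slope from the true gradient. Both $f_1^{\{n\}}$ and $\widehat{f}_{lin}^{\{n\}(N)}$ are affine, so their Shapley effects are given by the explicit Gaussian linear formula \eqref{eq_V}, and depend only on the respective slope vectors $Df(\mu^{\{n\}})^{T}$ and $\widehat{\beta}^{\{n\}}$, together with the covariance $\Sigma^{\{n\}}$. Since the ratio in \eqref{eq_varianceShapley}–\eqref{eq_V} is invariant under scaling of the slope vector and under scaling of $\Sigma^{\{n\}}$, I would replace $\Sigma^{\{n\}}$ by $a^{\{n\}}\Sigma^{\{n\}}$, whose spectrum lies in a fixed compact subset of $\R_+^*$ by Assumption \ref{assum_approx_linear}, and replace each slope by its normalization on the unit sphere. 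The Gaussian linear Shapley map is Lipschitz in the direction of the slope, uniformly over covariance matrices with spectrum in a fixed compact set; this uniform continuity already underlies the proofs of Propositions \ref{prop_linear1} and \ref{prop_linear_Dh}, and I would reuse it here. Because $Df(\mu)\neq 0$, $\mu^{\{n\}}\to\mu$ and $f$ is $\mathcal{C}^2$ near $\mu$, the norm $\|Df(\mu^{\{n\}})\|$ is bounded below by a positive constant, so the distance between the directions of $\widehat{\beta}^{\{n\}}$ and $Df(\mu^{\{n\}})^{T}$ is controlled by $\|\widehat{\beta}^{\{n\}}-Df(\mu^{\{n\}})^{T}\|$. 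The whole statement thus reduces to showing that this quantity is $O(1/\sqrt{a^{\{n\}}})$ on an event of probability at least $1-C_{\sup}^{(1)}\exp(-C_{\inf}N)$.

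To bound $\|\widehat{\beta}^{\{n\}}-Df(\mu^{\{n\}})^{T}\|$, I would work on a localization event on which the empirical covariance $\widehat{\Sigma}^{\{n\}}$ is well conditioned (of order $1/a^{\{n\}}$, with inverse of order $a^{\{n\}}$) and all the sample points lie in the neighbourhood where $f$ is $\mathcal{C}^2$. On this event the normal equations give
\[
\widehat{\beta}^{\{n\}}-Df(\mu^{\{n\}})^{T}=\widehat{\Sigma}^{\{n\}-1}\,\frac{1}{N}\sum_{l=1}^{N}\big(X^{\{n\}(l)}-\bar{X}^{\{n\}}\big)\,r^{\{n\}(l)},
\]
where $r^{\{n\}(l)}$ is the residual of the affine fit, i.e. the nonlinear part of $f$ at $X^{\{n\}(l)}$. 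Taylor expanding $f$ at $\mu^{\{n\}}$, the quadratic part of $r^{\{n\}(l)}$ is an even function of $X^{\{n\}(l)}-\mu^{\{n\}}$ and therefore has vanishing population correlation with the (odd) centred design $X^{\{n\}(l)}-\mu^{\{n\}}$, because the corresponding third Gaussian moments are zero; its empirical correlation is then a mean-zero fluctuation that concentrates by Gaussian concentration, while the genuine second-order remainder $R_2$ is $o(\|X^{\{n\}}-\mu^{\{n\}}\|^{2})=o(1/a^{\{n\}})$ and contributes only a higher-order bias. Multiplying the resulting sum, of order $1/a^{\{n\}3/2}$, by $\widehat{\Sigma}^{\{n\}-1}$ of order $a^{\{n\}}$ yields $\|\widehat{\beta}^{\{n\}}-Df(\mu^{\{n\}})^{T}\|=O(1/\sqrt{a^{\{n\}}})$ on this event, and the Lipschitz reduction of the first paragraph then delivers the claimed inequality. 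The exponential confidence level comes from the Gaussian concentration of $\widehat{\Sigma}^{\{n\}}$ and of the centred cross term, each valid with probability at least $1-C_{\sup}^{(1)}\exp(-C_{\inf}N)$.

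The hard part is that $f$ is assumed $\mathcal{C}^2$ only on a neighbourhood of $\mu$, with no global growth control, whereas a least-squares slope is sensitive to sample points far from $\mu$, where a single large response $f(X^{\{n\}(l)})$ could dominate the fit. The delicate point is therefore to set up the localization event so that every Taylor expansion used is valid and the empirical design mimics its Gaussian population counterpart, while ensuring that the probability of its complement is absorbed into the $C_{\sup}^{(1)}\exp(-C_{\inf}N)$ term and that the constants $C_{\inf}$, $C_{\sup}^{(1)}$, $C_{\sup}^{(2)}$ remain independent of $n$ and $N$. Reconciling the exponential-in-$N$ confidence level with the smallness in $a^{\{n\}}$ of the Gaussian tails, so that contributions from points outside the neighbourhood do not spoil the $O(1/\sqrt{a^{\{n\}}})$ rate, is where the quantitative concentration estimates for the empirical Gram matrix and the cross term must be combined most carefully, and I expect this to be the main technical obstacle of the proof.
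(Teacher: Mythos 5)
Your proposal follows essentially the same route as the paper's proof: a localization and concentration event controlling the empirical Gram matrix (Lemma \ref{lm_linear_regression1}), the normal equations combined with the first-order Taylor remainder on that event to get $\|\widehat{\beta}^{\{n\}}-\nabla f(\mu^{\{n\}})\|\leq C_{\sup}/\sqrt{a^{\{n\}}}$ with probability $1-C_{\sup}\exp(-C_{\inf}N)$ (Lemma \ref{lm_linear_regression2}), and the Lipschitz dependence of the Gaussian linear conditional variances on the slope vector (Lemma \ref{lm_linear_Dh2}, plus a lower bound on $\V(\sqrt{a^{\{n\}}}\widehat{\beta}^{\{n\}T}X^{\{n\}})$) to pass to the Shapley effects. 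The only differences are cosmetic — you use the centred normal equations where the paper applies $(A^{\{n\}T}A^{\{n\}})^{-1}A^{\{n\}T}$ to the residual vector directly, your odd-moment cancellation is unnecessary since the crude bound on the cross term already yields the $O(1/\sqrt{a^{\{n\}}})$ rate, and the tension you rightly flag between the $\exp(-C_{\inf}N)$ confidence level and the $N\exp(-C_{\inf}a^{\{n\}})$ probability of the localization event is present in (and handled rather loosely by) the paper's own argument as well.
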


\begin{corol}
Under the assumptions of Proposition \ref{prop_linear1}, there exist $C_{\inf}>0$, $C_{\sup}^{(1)}<+\infty$ and $C_{\sup}^{(2)}<+\infty$ such that, with probability at least $1 -C_{\sup}^{(1)}\exp(-C_{\inf} N)$, we have
$$
\|\eta(X^{\{n\}},f)-\eta(X^{\{n\}},\widehat{f}_{lin}^{\{n\}(N)})\|\leq C_{\sup}^{(2)} \frac{1}{\sqrt{a^{\{n\}}}} .
$$
Moreover, if $a^{\{n\}} \Sigma^{\{n\} } \underset{n \to +\infty}{\longrightarrow}\Sigma$, then, letting $X^*\sim \mathcal{N}(\mu,\Sigma)$, there exists $C_{\sup}^{(3)}<+\infty$ such that,  with probability at least $1 -C_{\sup}^{(1)}\exp(-C_{\inf} N)$,
$$
\| \eta(X^{\{n\}},\widehat{f}_{lin}^{\{n\}(N)}) - \eta(X^*,f_1)\|\leq C_{\sup}^{(3)}\left( \| \mu^{\{n\}}-\mu\|+ \| a^{\{n\}}\Sigma^{\{n\}}-\Sigma\| + \frac{1}{\sqrt{a^{\{n\}}}}\right).
$$
\end{corol}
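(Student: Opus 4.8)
The plan is to obtain both inequalities from the triangle inequality, combining the deterministic Taylor-approximation estimates of Propositions \ref{prop_linear1} and \ref{prop_linear2} with the high-probability regression estimate of Proposition \ref{prop_linear_regression}. Since all the stochastic content is already packaged in Proposition \ref{prop_linear_regression}, both displayed bounds will be established on one and the same event of probability at least $1-C_{\sup}^{(1)}\exp(-C_{\inf}N)$, so no union bound is required and the constants $C_{\inf}$ and $C_{\sup}^{(1)}$ are inherited verbatim from that proposition. I first note that the hypotheses of the corollary (namely those of Proposition \ref{prop_linear1}, so $f$ is subpolynomial and $\mathcal{C}^3$ near $\mu$ with $Df(\mu)\neq 0$, together with Assumption \ref{assum_approx_linear}) simultaneously imply the hypotheses of Propositions \ref{prop_linear1}, \ref{prop_linear2}, and \ref{prop_linear_regression}.

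For the first bound, I would split
$$\|\eta(X^{\{n\}},f)-\eta(X^{\{n\}},\widehat{f}_{lin}^{\{n\}(N)})\|\leq \|\eta(X^{\{n\}},f)-\eta(X^{\{n\}},f_1^{\{n\}})\|+\|\eta(X^{\{n\}},f_1^{\{n\}})-\eta(X^{\{n\}},\widehat{f}_{lin}^{\{n\}(N)})\|.$$
Proposition \ref{prop_linear1} controls the first term by a constant times $1/a^{\{n\}}$, while Proposition \ref{prop_linear_regression} controls the second term by $C_{\sup}^{(2)}/\sqrt{a^{\{n\}}}$ on the event above. Because $a^{\{n\}}\to+\infty$ under Assumption \ref{assum_approx_linear}, one has $1/a^{\{n\}}\leq 1/\sqrt{a^{\{n\}}}$ for $n$ large, so the Taylor term is dominated by the regression term and the sum is of order $1/\sqrt{a^{\{n\}}}$; enlarging $C_{\sup}^{(2)}$ if necessary gives the claimed bound.

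For the second bound I would similarly split
$$\|\eta(X^{\{n\}},\widehat{f}_{lin}^{\{n\}(N)})-\eta(X^*,f_1)\|\leq \|\eta(X^{\{n\}},\widehat{f}_{lin}^{\{n\}(N)})-\eta(X^{\{n\}},f_1^{\{n\}})\|+\|\eta(X^{\{n\}},f_1^{\{n\}})-\eta(X^*,f_1)\|.$$
The first term is again bounded by $C_{\sup}^{(2)}/\sqrt{a^{\{n\}}}$ via Proposition \ref{prop_linear_regression}, and the second term is bounded by $O(\|\mu^{\{n\}}-\mu\|)+O(\|a^{\{n\}}\Sigma^{\{n\}}-\Sigma\|)$ via Proposition \ref{prop_linear2}, whose additional hypothesis $a^{\{n\}}\Sigma^{\{n\}}\to\Sigma\in S_p^{++}(\R)$ is precisely the extra assumption of this part of the corollary. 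Collecting the three contributions and taking $C_{\sup}^{(3)}$ to be the largest of the constants produced yields the stated inequality on the same event.

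The argument is essentially bookkeeping and presents no real difficulty. The only points deserving care are, first, verifying that the assumptions of Propositions \ref{prop_linear1}, \ref{prop_linear2} and \ref{prop_linear_regression} are all implied by the corollary's hypotheses, and second, ensuring that both displayed inequalities are asserted on a common event so that the probability $1-C_{\sup}^{(1)}\exp(-C_{\inf}N)$ is not degraded by intersecting distinct events.
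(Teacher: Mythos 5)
Your proof is correct and matches the paper's intended argument: the corollary is a direct consequence of Propositions \ref{prop_linear1}, \ref{prop_linear2} and \ref{prop_linear_regression} via the triangle inequality through $\eta(X^{\{n\}},f_1^{\{n\}})$, with the deterministic $O(1/a^{\{n\}})$ term absorbed into the $1/\sqrt{a^{\{n\}}}$ term and both bounds holding on the single high-probability event from Proposition \ref{prop_linear_regression}. The paper leaves this bookkeeping implicit, and your write-up supplies exactly the missing details.
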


\subsection{Numerical experiments}

In this section, we compute the Shapley effects of the true function $f$ and the ones obtained from the three previous linear approximations to illustrate the previous theoretical results.
Let $p=4$ and 
$$
f(x)=\cos(x_1)x_2+\sin(x_2)+2\cos(x_3)x_1-\sin(x_4).
$$
This function is $1$-Lipschitz continuous and $\mathcal{C}^{\infty}$ on $\R^4$. We choose $\Sigma^{\{n\}}=\frac{1}{n^2}\Sigma$ (that is, $a^{\{n\} }=n^2$), where $\Sigma$ is defined by:
$$
\Sigma= A^T A,\;\;\; A=\begin{pmatrix}
-2 & -1 & 0 & 1 \\ 2 & -2 & -1 & 0 \\ 1 & 2 & -2 & -1\\ 0 & 1 & 2 & -2
\end{pmatrix}.
$$
Let $\mu=(1, 0 ,2 ,1)$ and $\mu^{\{n\}}=\mu+\frac{1}{n}(1,1,1,1)$.

\begin{figure}[ht]
    \centering
    \includegraphics[width=13cm,height=10cm]{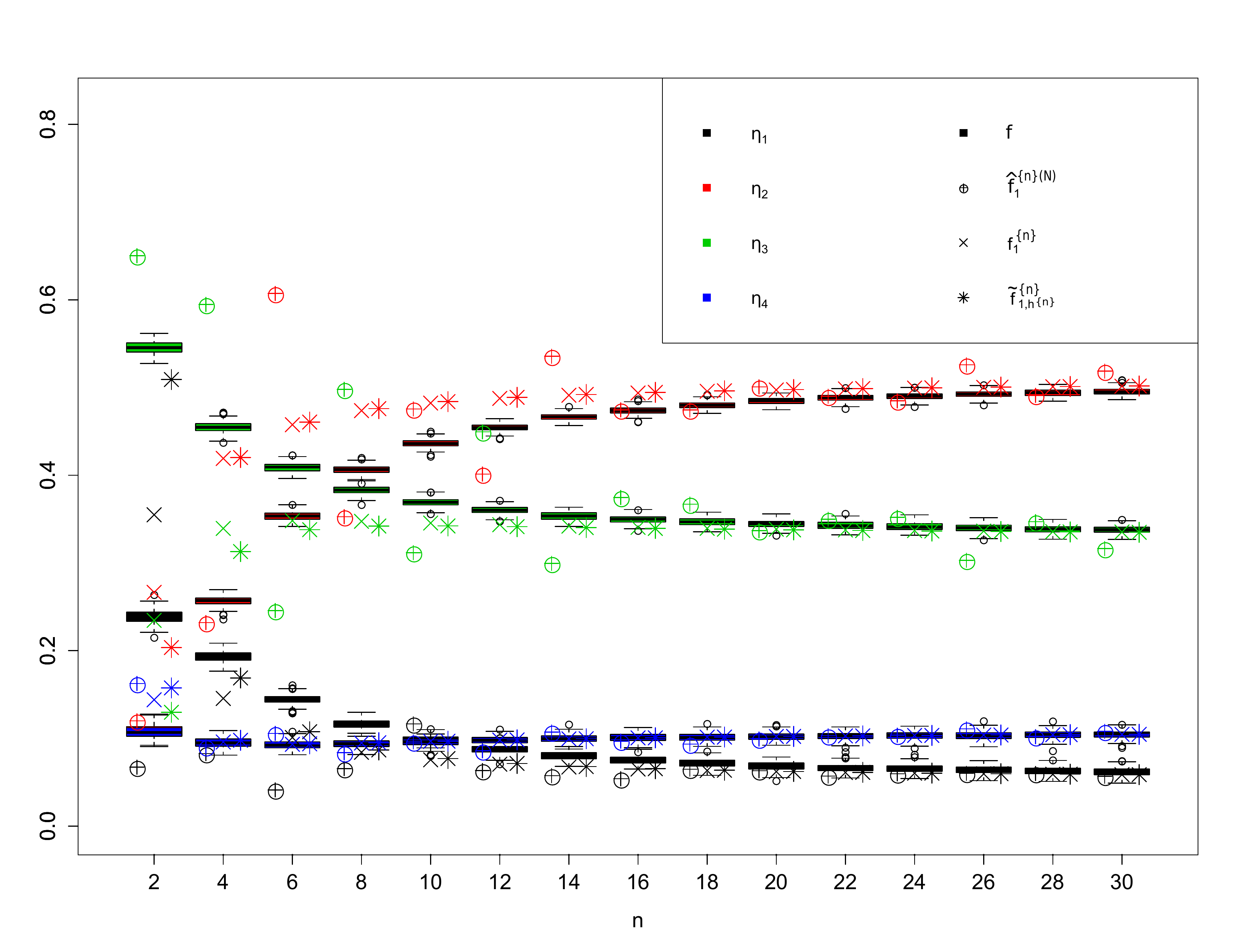}
    \caption{Shapley effects of the linear approximations $\widehat{f}_{lin}^{\{n\}(N)}$, $f_1^{\{n\}}$, $\tilde{f}_{1,h^{\{n\}}}^{\{n\}}$ and boxplots of estimates of the Shapley effects of the function $f$.}
    \label{fig_linear}
\end{figure}

On Figure \ref{fig_linear}, we plot, for different values of $n$, the vector $\eta(X^{\{n\}},\widehat{f}_{lin}^{\{n\}(N)})$ (given by the linear regression), the vector $\eta(X^{\{n\}},f_1^{\{n\}})$ (given by the true Taylor polynomial), the vector $\eta(X^{\{n\}},\tilde{f}_{1,h^{\{n\}}}^{\{n\}})$ (given by the finite difference approximation of the derivatives) and the boxplots of 200 estimates of $\eta(X^{\{n\}},f)$ computed by the R function "shapleyPermRand" from the R package {\verb sensitivity } (see \cite{song_shapley_2016,iooss2019shapley}), which is adapted to non-linear functions, with parameters $N_V=10^5$, $m=10^3$ and $N_I=3$. To compute the linear regression, we observed a sample of size $N=40$. To compute the finite difference approximation, we took $h_i^{\{n\}}= \sqrt{\V(X_i^{\{n\}})}$.

The differences between the Shapley effects given by $f$ and the ones given by the linear approximations of $f$ seem to converge to $0$, as it is proved by Propositions \ref{prop_linear1}, \ref{prop_linear_Dh} and \ref{prop_linear_regression}. Moreover, Figure \ref{fig_linear} emphasizes that the Shapley effects obtained from the linear regression get closer slower to the true ones than the ones given by the other linear approximations.

We remark that we have here $\Sigma^{\{n\}}=\frac{1}{a^{\{n\}}}\Sigma$ and thus the assumptions of Proposition \ref{prop_linear2} hold. Hence, the values of the true Shapley effects $\eta(X^{\{n\}},f)$ converge, as we can see on Figure \ref{fig_linear}.

The computation time for each estimate of the Shapley effects is around 5 seconds using "shapleyPermRand", $1.9\times 10^{-3}$ using the linear approximation $f_1^{\{n\}}$ or $\tilde{f}_{1,h^{\{n\}}}^{\{n\}}$ and $2.4\times 10^{-3}$ using the linear approximation $\widehat{f}_{lin}^{\{n\}(N)}$.
Remark that this time difference can become more accentuated if the function $f$ is a costly computer code.

\section{Approximation of the empirical mean by a Gaussian vector}\label{section_approx_gaussian}

\subsection{Theoretical results}\label{section_approx_gaussian_theore}

Here, we extend the results of Section \ref{section_approx_linear} to the case where the distribution of the input (that we now write  $\widehat{X}^{\{n\}}$) is close to a Gaussian distribution $X^{\{n\}}$. We focus on the setting where the input vector is an empirical mean
$$
\widehat{X}^{\{n\}}=\frac{1}{n}\sum_{l=1}^n U^{(l)},
$$
where $(U^{(l)})_{l\in [1:n]}$ is an i.i.d. sample of a random vector $U$ in $\R^p$ such that $\E(\|U\|^2)<+\infty$ and $\V(U)\neq 0$. Let $\mu:=\E(U)$ and $\Sigma$ be the covariance matrix of $U$. Remark that, as is Section \ref{section_approx_linear}, the input vector $\widehat{X}^{\{n\}}$ is a random vector converging to its mean, and its covariance matrix $\Sigma^{\{n\}}$ is equal to $\frac{1}{n}\Sigma$.

Contrary to Section \ref{section_approx_linear}, $\widehat{X}^{\{n\}}$ is not Gaussian, but, thanks to the central limit theorem, its distribution is close to $\mathcal{N}(\mu, \frac{1}{n}\Sigma)$. Hence,
we would like to estimate the Shapley effects $\eta(\widehat{X}^{\{n\}},f)$ by $\eta(X^*,Df(\mu))$, where $X^*\sim \mathcal{N}(0,\Sigma)$, since $\eta(X^*,Df(\mu))$ can be computed using the explicit expression \eqref{eq_V} of the Gaussian linear case, and for instance the function "ShapleyLinearGaussian" of the package {\verb sensitivity }.

\begin{prop}\label{prop_approx_gaussien}
Assume that $f$ is $\mathcal{C}^3$ on a neighbourhood of $\mu$ with $Df(\mu)\neq 0$ and that $f$ is subpolynomial, that is there exist $k\in \N^*$ and $C>0$ such that for all $x\in \R^p$, we have $|f(x)|\leq C(1+\|x\|^k)$. If $\E(\|U\|^{4k})<+\infty$ and if $U$ has a bounded probability density function, then
$$
\eta(\widehat{X}^{\{n\}},f)\underset{n \to +\infty}{\longrightarrow}\eta(X^*,Df(\mu)).
$$
\end{prop}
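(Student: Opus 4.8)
The plan is to reduce the statement to ordinary moment computations for a rescaled output. Since the Shapley effects are unchanged under any affine transformation of the output (constants are already noted to be harmless, and a global scaling cancels in the normalisation by $\V(Y)$), I may replace $f$ by $g_n(x):=\sqrt{n}\,(f(x)-f(\mu))$, so that $\eta(\widehat{X}^{\{n\}},f)=\eta(\widehat{X}^{\{n\}},g_n)$. Writing $Z^{\{n\}}:=\sqrt{n}\,(\widehat{X}^{\{n\}}-\mu)$ (so that conditioning on $\widehat{X}^{\{n\}}_u$ and on $Z^{\{n\}}_u$ coincide) and $V^{\{n\}}:=g_n(\widehat{X}^{\{n\}})=\sqrt{n}\,(f(\mu+Z^{\{n\}}/\sqrt{n})-f(\mu))$, the multivariate central limit theorem gives $Z^{\{n\}}\Rightarrow X^{*}\sim\mathcal{N}(0,\Sigma)$, while a first-order Taylor expansion of the $\mathcal{C}^{1}$ function $f$ at $\mu$ shows that $z\mapsto\sqrt{n}\,(f(\mu+z/\sqrt{n})-f(\mu))\to Df(\mu)z$ locally uniformly. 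Inspecting \eqref{Shapley}, it then suffices to prove that, for every $u\subset[1:p]$,
\[
\V\!\big(\E(V^{\{n\}}\mid \widehat{X}^{\{n\}}_u)\big)\;\longrightarrow\;\V\!\big(\E(f_1(X^{*})\mid X^{*}_u)\big),
\]
since these, together with the denominator $\V(V^{\{n\}})\to\V(f_1(X^{*}))$, are exactly the building blocks of $\eta(X^{*},Df(\mu))=\eta(X^{*},f_1)$. (Equivalently, one could first pass to the Gaussian input $\mathcal{N}(\mu,\tfrac1n\Sigma)$, for which Propositions \ref{prop_linear1}--\ref{prop_linear2} already yield the limit $\eta(X^{*},Df(\mu))$, and then control only the non-Gaussianity; the direct route below handles both effects at once.)

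First I would dispose of the denominator. Because $U$ has a bounded density on $\R^{p}$, it charges no affine hyperplane, so $\Sigma=\V(U)\in S_p^{++}(\R)$; together with $Df(\mu)\neq0$ this gives $\V(f_1(X^{*}))=Df(\mu)\,\Sigma\,Df(\mu)^{T}>0$. This is also what makes formula \eqref{eq_V} applicable to the limiting Gaussian linear model and explains why the limit is stated with $X^{*}\sim\mathcal{N}(0,\Sigma)$. It then remains to treat the numerators, i.e. the conditional variances.

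The heart of the argument is a pick--freeze representation. For $Y\in L^{2}$ one has $\V(\E(Y\mid X_u))=\cov(Y,Y')$, where $Y'=f(X_u,X'_{-u})$ and $X'_{-u}$ is drawn from the conditional law of $X_{-u}$ given $X_u$, conditionally independent of $X_{-u}$; the bounded density guarantees that these conditional laws are well defined. This identity turns the awkward conditional-variance functional into an ordinary covariance of the pick--freeze pair. I would therefore let $(Z^{\{n\}},\tilde{Z}^{\{n\}})$ be the rescaled pick--freeze pair (shared $u$-block, conditionally independent $-u$-blocks) and set $\tilde{V}^{\{n\}}=g_n(\widehat{X}^{\prime\{n\}})$, so that $\V(\E(V^{\{n\}}\mid\widehat{X}^{\{n\}}_u))=\cov(V^{\{n\}},\tilde{V}^{\{n\}})$. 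The crucial step is to show $(Z^{\{n\}},\tilde{Z}^{\{n\}})\Rightarrow(X^{*},\tilde{X}^{*})$, the Gaussian pick--freeze pair. This is the delicate point, since conditioning is \emph{not} continuous under mere weak convergence: here I would invoke a local limit theorem which, thanks to the boundedness of the density of $U$, yields convergence (e.g. in $L^{1}$) of the density of $Z^{\{n\}}$ to the Gaussian density, hence convergence of the conditional densities of the $-u$-block given the $u$-block, and hence joint weak convergence of the pick--freeze pair. Combined with the local uniform convergence $g_n(\mu+\cdot/\sqrt{n})\to Df(\mu)\,\cdot$, this gives $(V^{\{n\}},\tilde{V}^{\{n\}})\Rightarrow(f_1(X^{*}),f_1(\tilde{X}^{*}))$.

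Finally I would upgrade these convergences in distribution to convergence of the relevant moments by establishing uniform integrability of $\{V^{\{n\}}\}$, $\{(V^{\{n\}})^{2}\}$ and $\{V^{\{n\}}\tilde{V}^{\{n\}}\}$. I would split on a fixed neighbourhood of $\mu$ and its complement: on the neighbourhood the $\mathcal{C}^{1}$ bound gives $|V^{\{n\}}|\le L\,\|Z^{\{n\}}\|$, whose moments are controlled by those of $U$ through the central limit theorem, while on the complement the subpolynomial bound $|f(x)|\le C(1+\|x\|^{k})$ together with the fast-decaying tail probability of $\{\|\widehat{X}^{\{n\}}-\mu\|>r\}$ and the hypothesis $\E(\|U\|^{4k})<+\infty$ controls the contribution (the $4k$-th moment being precisely what bounds the product of the two $k$-th-power tail factors). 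This yields a uniform bound on $\E|V^{\{n\}}\tilde{V}^{\{n\}}|^{1+\delta}$, whence $\cov(V^{\{n\}},\tilde{V}^{\{n\}})\to\V(\E(f_1(X^{*})\mid X^{*}_u))$ and likewise $\V(V^{\{n\}})\to\V(f_1(X^{*}))>0$; substituting into \eqref{Shapley} gives $\eta(\widehat{X}^{\{n\}},f)\to\eta(X^{*},Df(\mu))$. The main obstacle is the joint weak convergence of the pick--freeze pairs: because conditional expectations are discontinuous functionals of the law under weak convergence, one genuinely needs the bounded-density hypothesis to run a local limit theorem and obtain convergence of the conditional laws, the moment bookkeeping being then routine.
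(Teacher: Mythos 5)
Your proposal is correct, and it reaches the result by a route that is organisationally different from the paper's even though it rests on the same two pillars. The paper first invokes Proposition \ref{prop_linear1} to replace $\eta(X^{\{n\}},f)$ (Gaussian input $\mathcal{N}(\mu,\tfrac1n\Sigma)$) by $\eta(X^*,Df(\mu))$, and then devotes the whole proof to showing $\eta(\widehat{X}^{\{n\}},f)-\eta(X^{\{n\}},f)\to0$; after the same rescaling as yours ($f_n$, $\tilde{X}^{\{n\}}$), it works directly with the integrals defining $\E\bigl(\E[f_n(Z)\mid Z_u]^2\bigr)$, truncating to $[-K,K]^p$ uniformly in $n$ (using the subpolynomial bound, Rosenthal's inequality and tightness) and then comparing the truncated integrals for $\tilde{X}^{\{n\}}$ and $X^*$ through the uniform density convergence given by the local limit theorem, with explicit control of the conditional-density ratios $f_{Z}(x)/f_{Z_u}(x_u)$ on the compact where the Gaussian marginal is bounded below. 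You instead (i) go for the limit $\eta(X^*,f_1)$ in one step, and (ii) repackage $\V(\E(Y\mid X_u))$ as the pick--freeze covariance $\cov(Y,Y')$, reducing the problem to joint weak convergence of the pick--freeze pair plus uniform integrability of $\{V^{\{n\}}\tilde{V}^{\{n\}}\}$, for which the $L^2$-bound $\E((V^{\{n\}})^4)\leq C_{\sup}$ (exactly where $\E(\|U\|^{4k})<+\infty$ enters, as in the paper's Lemma \ref{lm_approx_gaussian_unif_bound}) suffices. You correctly identify the genuine difficulty --- conditioning is not continuous under weak convergence, so the bounded-density hypothesis and the local limit theorem are indispensable --- and your justification of the convergence of the conditional laws implicitly requires the same restriction to a compact set where the limiting marginal density is bounded below, which is precisely the paper's Part 3; be aware that this step is not automatic from $L^1$ convergence of densities alone and needs the compact truncation to avoid dividing by small marginal densities. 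What your route buys is a cleaner conceptual statement (weak convergence of a pair plus uniform integrability) at the cost of having to establish joint convergence of a bivariate law; what the paper's route buys is that everything stays at the level of explicit density-ratio estimates, never requiring a weak-convergence statement for the conditioned pair.
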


Proposition \ref{prop_approx_gaussien} justifies that $\eta(X^*,Df(\mu))$ is a good approximation of $\eta(\widehat{X}^{\{n\}},f)$. Furthermore, if $\mu$, $\Sigma$ and $Df(\mu)$ are unknown, the following corollary shows that they can be replaced by approximations. Let $(U^{\{l\}\prime})_{l\in [1:n']}$ and $(U^{\{l\}\prime \prime})_{l\in [1:n'']}$  be independent of $(U^{\{l\}})_{l\in [1:n]}$, composed of i.i.d. copies of $U$ and with $n'=n'(n)$ and $n'' = n''(n)$ such that $n', n'' \to \infty$ when $n \to \infty$. We can estimate $\mu$ (resp. $\Sigma$) by the empirical mean $\widehat{X}^{\{n'\}\prime}$ of $(U^{\{l\}\prime})_{l\in [1:n']}$ (resp. the empirical covariance matrix $\widehat{\Sigma}^{\{n''\}\prime \prime}$ of $(U^{\{l\}\prime \prime})_{l\in [1:n'']}$), and we can estimate $Df$ by a finite difference approximation. The next corollary guarantees that the error stemming from these additional estimations goes to $0$ as $n \to \infty$.

\begin{corol}\label{corol_approx_gaussien}
Assume that the assumptions of Proposition \ref{prop_approx_gaussien} hold and that $(h^{\{n\}})_{n\in \N}$ is a sequence of $(\R_+^*)^p$ converging to $0$. Let $X^{*n}$ be a random vector with distribution $\mathcal{N}(\mu,\widehat{\Sigma}^{\{n''\}\prime \prime})$ conditionally to $\widehat{\Sigma}^{\{n''\}\prime \prime}$. Then
    $$
    \left\|\eta(\widehat{X}^{\{n\}},f)- \eta(X^{*n},\tilde{f}_{1,h^{\{n\}},\widehat{X}^{\{n'\}\prime}}^{\{n\}})\right\| \overset{a.s.}{\underset{n \to +\infty}{\longrightarrow}}0,
    $$
where $\tilde{f}_{1,h^{\{n\}},\widehat{X}^{\{n'\}\prime}}^{\{n\}}$ is the linear approximation of $f$ at $\widehat{X}^{\{n'\}\prime}$ obtained from Equation \eqref{eq_diff_finie} by replacing $\mu^{\{n\}}$ by $\widehat{X}^{\{n'\}'}$.
\end{corol}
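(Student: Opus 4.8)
The plan is to combine Proposition \ref{prop_approx_gaussien} with a continuity argument for the Gaussian linear Shapley map. First I would split the quantity of interest by the triangle inequality:
\begin{align*}
\left\|\eta(\widehat{X}^{\{n\}},f)- \eta(X^{*n},\tilde{f}_{1,h^{\{n\}},\widehat{X}^{\{n'\}\prime}}^{\{n\}})\right\|
&\leq \left\|\eta(\widehat{X}^{\{n\}},f)- \eta(X^*,Df(\mu))\right\| \\
&\quad + \left\|\eta(X^*,Df(\mu)) - \eta(X^{*n},\tilde{f}_{1,h^{\{n\}},\widehat{X}^{\{n'\}\prime}}^{\{n\}})\right\|.
\end{align*}
The first term is a deterministic sequence that tends to $0$ by Proposition \ref{prop_approx_gaussien} (the Shapley effects of the fixed-distribution vectors $\widehat{X}^{\{n\}}$ and $X^*$ are deterministic numbers, so this convergence is in particular almost sure). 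It therefore only remains to show that the second term tends to $0$ almost surely.

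For the second term the key observation is that both Shapley vectors are values of the explicit Gaussian linear formulas \eqref{eq_varianceShapley} and \eqref{eq_V}. Precisely, for a Gaussian input with covariance matrix $\Gamma \in S_p^{++}(\R)$ and a linear model with gradient vector $\beta \neq 0$, these formulas define a map $\Phi(\beta,\Gamma)$ returning the vector of Shapley effects, which depends only on $(\beta,\Gamma)$ and not on the mean. I would record that $\eta(X^*,Df(\mu)) = \Phi(Df(\mu)^T,\Sigma)$ and $\eta(X^{*n},\tilde{f}_{1,h^{\{n\}},\widehat{X}^{\{n'\}\prime}}^{\{n\}}) = \Phi(\widehat{\beta}^{\{n\}},\widehat{\Sigma}^{\{n''\}\prime \prime})$, where $\widehat{\beta}^{\{n\}} := \widehat{D}_{h^{\{n\}}}f(\widehat{X}^{\{n'\}\prime})^T$ is the finite difference gradient. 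Then I would check that $\Phi$ is continuous on the open set $\{\beta \neq 0\}\times S_p^{++}(\R)$: each conditional variance in \eqref{eq_V} is continuous in $(\beta,\Gamma)$ because $\Gamma_{u,u}$ is invertible whenever $\Gamma$ is positive definite, and the normalising factor $\V(Y)=\beta^T\Gamma\beta$ stays bounded away from $0$, so $\Phi$ is a finite sum of ratios of continuous functions with nonvanishing denominator.

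Next I would establish the three almost-sure convergences feeding into $\Phi$. By the strong law of large numbers, $\widehat{X}^{\{n'\}\prime}\to\mu$ and $\widehat{\Sigma}^{\{n''\}\prime \prime}\to\Sigma$ almost surely (both only require $\E(\|U\|^2)<+\infty$, which holds). For the gradient, since $f$ is $\mathcal{C}^1$ on a neighbourhood of $\mu$, applying the mean value theorem pathwise to $t\mapsto f(\widehat{X}^{\{n'\}\prime}+t e_i)$ shows that each centred difference in \eqref{eq_diff_finie} equals $\partial_i f(\xi_i^{\{n\}})$ for some point $\xi_i^{\{n\}}$ lying between $\widehat{X}^{\{n'\}\prime}-e_i h_i^{\{n\}}$ and $\widehat{X}^{\{n'\}\prime}+e_i h_i^{\{n\}}$; as $\widehat{X}^{\{n'\}\prime}\to\mu$ almost surely and $h^{\{n\}}\to 0$, one has $\xi_i^{\{n\}}\to\mu$ and hence $\widehat{\beta}^{\{n\}}\to Df(\mu)^T$ almost surely by continuity of the partial derivatives. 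Intersecting these events into a single probability-one event, on it $(\widehat{\beta}^{\{n\}},\widehat{\Sigma}^{\{n''\}\prime \prime})\to(Df(\mu)^T,\Sigma)$; since $Df(\mu)\neq 0$ and $\Sigma\in S_p^{++}(\R)$, the limit lies in the domain of continuity of $\Phi$ and the iterates eventually lie there too, so $\Phi(\widehat{\beta}^{\{n\}},\widehat{\Sigma}^{\{n''\}\prime \prime})\to\Phi(Df(\mu)^T,\Sigma)$, giving the claim.

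I expect the main obstacle to be the careful justification that $\Phi$ is continuous on $\{\beta\neq 0\}\times S_p^{++}(\R)$, together with the verification that all three convergences can be realised simultaneously on one probability-one event. The latter is routine but must be stated carefully because $\widehat{\beta}^{\{n\}}$ is evaluated at the \emph{random} point $\widehat{X}^{\{n'\}\prime}$, so the mean value theorem argument has to be carried out pathwise and the eventual positive definiteness of $\widehat{\Sigma}^{\{n''\}\prime \prime}$ and nonvanishing of $\widehat{\beta}^{\{n\}}$ invoked only along the sequence.
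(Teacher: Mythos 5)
Your proof is correct and follows the same overall decomposition as the paper's: both isolate the deterministic convergence $\eta(\widehat{X}^{\{n\}},f)\to\eta(X^*,Df(\mu))$ given by Proposition \ref{prop_approx_gaussien}, and then show that the plug-in Gaussian linear Shapley effects converge almost surely to the same limit. Where you genuinely differ is in that second step. The paper reduces to deterministic sequences $x^{\{n\}}\to\mu$ and $\Sigma^{\{n\}}\to\Sigma$ and then re-invokes the small-variance machinery of Section \ref{section_approx_linear} (Propositions \ref{prop_linear1} and \ref{prop_linear2} and Corollary \ref{corol_linear_Dh1}) to obtain $\eta(X^{*n},\tilde{f}_{1,h^{\{n\}},x^{\{n\}}}^{\{n\}})\to\eta(X^*,f_1)$; strictly speaking this needs an implicit rescaling, since Corollary \ref{corol_linear_Dh1} is stated for covariances tending to zero whereas here $\Sigma^{\{n\}}\to\Sigma$. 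You instead observe that the plug-in quantity is the value of the explicit Gaussian linear map $\Phi(\beta,\Gamma)$ of \eqref{eq_varianceShapley}--\eqref{eq_V}, prove continuity of $\Phi$ on $\{\beta\neq 0\}\times S_p^{++}(\R)$, and feed in the strong law of large numbers together with a pathwise mean value theorem for the centred differences. This is more direct and self-contained and sidesteps the rescaling issue; the paper's route has the advantage of reusing already-proved results (Lemmas \ref{lm_linear_prop2} and \ref{lm_linear_Dh2} are in substance your continuity of $\Phi$). One small point you should make explicit: the positive definiteness of $\Sigma$, which you need for the limit $(Df(\mu)^T,\Sigma)$ to lie in the domain of continuity of $\Phi$, is not assumed directly (only $\V(U)\neq 0$ is), but it does follow from the hypothesis that $U$ admits a bounded density, which excludes support on a proper affine subspace.
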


\begin{rmk}\label{rmk_corol_approx_gaussien}
If $\mu$, $\Sigma$ or $Df$ is known, the previous corollary holds replacing $\widehat{X}^{\{n'\}\prime},\widehat{\Sigma}^{\{n''\}\prime \prime}$ or $\tilde{f}_{1,h^{\{n\}},\widehat{X}^{\{n'\}\prime}}^{\{n\}}$ by $\mu,\Sigma$ or $Df(\widehat{X}^{\{n'\}\prime})$ respectively. 
\end{rmk}

\begin{rmk}
The notation $\eta(X^{*n},\tilde{f}_{1,h^{\{n\}},\widehat{X}^{\{n'\}\prime}}^{\{n\}})$ is to be understood conditionally to $\widehat{\Sigma}^{\{n''\}\prime \prime}, \widehat{X}^{\{n'\}\prime}$. That is, conditionally to  $\widehat{\Sigma}^{\{n''\}\prime \prime}, \widehat{X}^{\{n'\}\prime}$, the Shapley effects $\eta(X^{*n},\tilde{f}_{1,h^{\{n\}},\widehat{X}^{\{n'\}\prime}}^{\{n\}})$ are defined with the fixed linear function $\tilde{f}_{1,h^{\{n\}},\widehat{X}^{\{n'\}\prime}}^{\{n\}}$ and the Gaussian distribution for $X^{*n}$.
\end{rmk}

\subsection{Application to the impact of individual estimation errors}\label{section_approx_gaussian_appli}

Let us show an example of application of the results of Section \ref{section_approx_gaussian_theore}. Let $U$ be a continuous random vector of $\R^p$, with a bounded density and with an unknown mean $\mu$. Assume that we observe an i.i.d. sample $(U^{(l)})_{l\in [1:n]}$ of $U$ and that we focus on the estimation of a parameter $\theta=f(\mu)$, where $f$ is $\mathcal{C}^3$. This parameter is estimated by  $f(\widehat{X}^{\{n\}})$ (which is asymptotically efficient by the delta-method), where $\widehat{X}^{\{n\}}$ is the empirical mean of $(U^{(l)})_{l\in [1:n]}$. The estimation error of each variable $\widehat{X}_i^{\{n\}}$ (for $i=1,\cdots,p$) propagates through $f$. To quantify the part of the estimation error of $Y=f(\widehat{X}^{\{n\}})$ caused by the individual estimation errors of each $\widehat{X}_i^{\{n\}}$ (for $i=1,\cdots,p$), one can estimate the Shapley effects $\eta(\widehat{X}^{\{n\}},f)=\eta(\widehat{X}^{\{n\}}-\mu,f(\cdot +\mu)-f(\mu))$ which assess the impact of individual errors on the global error.
To that end, Proposition \ref{prop_approx_gaussien} and Corollary \ref{corol_approx_gaussien} state that the Shapley effects can be estimated using a Gaussian linear approximation, with an error that vanishes as $n$ increases.\\

For example, let $f=\| \cdot \|^2$ and $p=5$. In this case, the derivative $Df$ is known and no finite difference approximation is required. To generate $U$ with a bounded density and with dependencies, we define $A_1\sim \mathcal{U}([5,10])$, $A_2\sim \mathcal{N}(0,4)$, $A_3$ with a symmetric triangular distribution $T(-1,8)$, $A_4 \sim 5 Beta(1,2)$ and $A_5\sim Exp(1)$. Then, we define
$$
\left\{ \begin{array}{cc}
U_1 &=A_1+2A_2 -0.5 A_3   \\
U_2 &= A_2+2A_1-0.5A_5\\
U_3 &= A_3+2 A_2 -0.5 A_5\\
U_4 &= A_4+2A_1-0.5A_2\\
U_5 &=A_5 +2 A_3 -0.5A_4 .
\end{array} \right.
$$
Since the mean $\mu$ and the covariance matrix $\Sigma$ are unknown, we need to estimate them (as in Corollary \ref{corol_approx_gaussien}). Using the notation of Section \ref{section_approx_gaussian_theore}, we choose $n=n'=n''$ and $(U^{(l)\prime})_{l\in [1:n']}=(U^{(l)\prime \prime})_{l\in [1:n']}$ (that is, we estimate the empirical mean and the empirical covariance matrix with the same sample). We estimate the Shapley effects $\eta(\widehat{X}^{\{n\}},f)$ by $\eta(X^{*n},Df(\widehat{X}^{\{n\}\prime}))$, where $X^{*n}$ is a random vector with distribution $\mathcal{N}(\mu,\widehat{\Sigma}^{\{n\}\prime \prime})$ conditionally to $\widehat{\Sigma}^{\{n\} \prime \prime}$. By Corollary \ref{corol_approx_gaussien} and Remark \ref{rmk_corol_approx_gaussien}, the difference between $\eta(\widehat{X}^{\{n\}},f)$ and $\eta(X^{*n},Df(\widehat{X}^{\{n\}\prime}))$ converges to 0 almost surely when $n$ goes to $+\infty$.

Here, we compute 1000 estimates of $\mu$ and $\Sigma$ and we compute the 1000 corresponding Shapley effects of the Gaussian linear approximation $\eta(X^{*n},Df(\widehat{X}^{\{n\}\prime}))$. To compare with these estimates, we also compute 1000 estimates given by the function "shapleySubsetMC" suggested in \cite{broto_variance_2018}, with parameters $N_{tot}=1000$, $N_i=3$ and with an i.i.d. sample of $\widehat{X}^{\{n\}}$ with size 1000. We plot the results on Figure \ref{fig:linear_general}.\\

\begin{figure}[ht]
    \centering
    \includegraphics[scale=0.4]{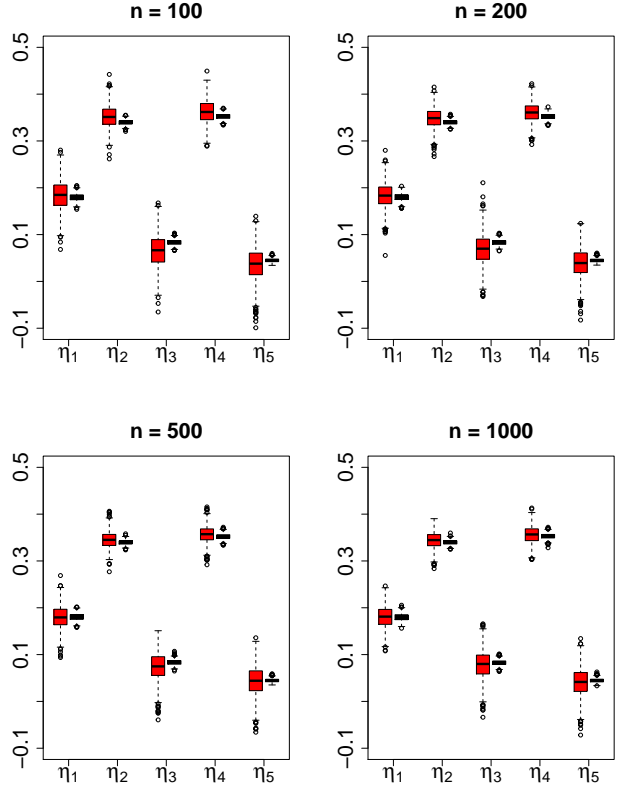}
    \caption{Boxplots of the estimates of the Shapley effects given by the general estimation function "shapleySubsetMC" (in red) and by the Gaussian linear approximation (in black).}
    \label{fig:linear_general}
\end{figure}

We observe that the estimates of the Shapley effects given by "shapleySubsetMC" and the Gaussian linear approximation are rather similar, even for $n=100$. However, the variance of the estimates given by the Gaussian linear approximation is smaller than the one of the general estimates given by "shapleySubsetMC". Moreover, each Gaussian linear estimation requires only a sample of $(U^{(l)\prime})_{l\in [1:n]}$ (to compute $\widehat{X}^{\{n\}\prime}$ and $\widehat{\Sigma}^{\{n\}\prime \prime}$) and takes around 0.007 second on a personal computer, whereas each general estimation with "shapleySubsetMC" requires here 1000 samples of $(U^{(l)\prime})_{l\in [1:n]}$ and takes around 11 seconds. Remark that this time difference can become more accentuated if the function $f$ is a costly computer code. Finally, the estimator of the Shapley effects given by the linear approximation converges almost surely when $n$ goes to $+\infty$, whereas the estimator of the Shapley effects given by "shapleySubsetMC" is only shown to converge in probability when the sample size and $N_{tot}$ go to $+\infty$ (see \cite{broto_variance_2018}).\\

To conclude, we have provided a framework where the theoretical results of Section \ref{section_approx_gaussian_theore} can be applied. We have illustrated this framework with numerical experiments on generated data. We have showed that, in this framework, to estimate the Shapley effects, the Gaussian linear approximation provides an estimator much faster and much more accurate than the general estimator given by "shapleySubsetMC".

\section{Conclusion}\label{section_conclu}
In this paper, we worked on the Gaussian linear framework approximation to estimate the Shapley effects, in order to take advantage of the simplicity brought by this framework. First, we focused on the case where the inputs are Gaussian variables converging to their means. This setting is motivated, in particular, by the case of uncertainties on physical quantities that are reduced by taking more and more measurements. We showed that, to estimate the Shapley effects, one can replace the true model $f$ by three possible linear approximations: the exact Taylor polynomial approximation, a finite difference approximation and a linear regression. We gave the rate of convergence of the difference between the Shapley effects of the linear approximations and the Shapley effects of the true model. These results are illustrated by a simulated application that highlights the accuracy of the approximations.
Then, we focused on the case where the inputs are given by an empirical mean. In this case, we proved that the instinctive idea to replace the empirical mean by a Gaussian vector and the true model by a linear approximation around the mean indeed gives good approximations of the Shapley effects. We highlighted the benefits of these estimators on numerical experiments.

Several questions remain open to future work. In particular, it would be valuable to obtain more insight on the choice between the general estimator of the Shapley effects for non-linear models and the estimators based on Gaussian linear approximations. Quantitative criteria for this choice, based for instance on the magnitude of the input uncertainties or on the number of input samples that are available, would be beneficial. Regarding the results on the impact of individual estimation errors in Section \ref{section_approx_gaussian_appli}, it would be interesting to obtain extensions to estimators of quantities of interest that are not only empirical means, for instance general M-estimators.

\section*{Acknowledgements}
We acknowledge the financial support of the Cross- Disciplinary Program on Numerical Simulation of CEA, the French Alternative Energies and Atomic Energy Commission. We would like to thank BPI France for co-financing this work, as part of the PIA (Programme d'Investissements d'Avenir) - Grand D\'{e}fi du Num\'{e}rique 2, supporting the PROBANT project. We acknowledge the Institut de Mathématiques de Toulouse.

\bibliographystyle{alpha}
\bibliography{biblio}

\clearpage

\appendixpage

We will write $C_{\sup}$ for a generic non-negative finite constant. The actual value of $C_{\sup}$ is of no interest and can change in the same sequence of equations. Similarly, we will write $C_{\inf}$ for a generic strictly positive constant.
Moreover, for all $u\subset [1:p]$, if $Z$ is a random vector in $\R^p$ and $g$ is a function from $\R^p$ to $\R$ such that  $\E(g(Z)^2)<+\infty$ and $\V(g(Z))>0$, let $S_u^{cl}(Z,g)$ be the closed Sobol index (see \cite{gamboa_statistical_2016} for example) for the input vector $Z$ and the model $g$, defined by:
$$
S_u^{cl}(Z,g)=\frac{\V(\E(g(Z)|Z_u))}{\V(g(Z))}.
$$

\section{Proofs for Section \ref{section_approx_linear}}

\textbf{Proof of Proposition \ref{prop_linear1}}

We divide the proof into several lemmas. We assume that the assumptions of Proposition \ref{prop_linear1} hold throughout this proof.

Let $\varepsilon \in ]0,1[$ be such that $f$ is $\mathcal{C}^3$ on $\overline{B}(\mu,\varepsilon)$ and such that, for all $x\in \overline{B}(\mu,\varepsilon)$, we have $Df(x)\neq 0$. Since $\mu^{\{n\}}$ converges to $\mu$, there exists $N\in \N$ such that, for all $n\geq N$, $\mu^{\{n\}}\in B(\mu,\varepsilon\slash 2)$. In the following, we assume that $n$ is larger than $N$.

\begin{lm}\label{lm_f2_f3}
For all $x\in B(\mu^{\{n\}}, \varepsilon\slash 2)$, we have
$$
|R_1^{\{n\}}(x)|\leq C_1 \|x-\mu^{\{n\}}\|^2,\;\; |R_2^{\{n\}}(x)|\leq C_1' \|x-\mu^{\{n\}}\|^3
$$
and for all $x \notin B(\mu^{\{n\}}, \varepsilon\slash 2)$, 
$$
|R_1^{\{n\}}(x)|\leq C_2 \|x-\mu^{\{n\}}\|^k, \;\; |R_2^{\{n\}}(x)|\leq C_2' \|x-\mu^{\{n\}}\|^k,
$$
where $C_1,C_1',C_2$ and $C_2'$ are positive constants that do not depend on $n$.
\end{lm}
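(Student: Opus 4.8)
The plan is to prove the two regimes of the statement by two genuinely different arguments, mirroring the case split in the conclusion: the inside-the-ball estimates come from Taylor's theorem applied on a fixed compact set, and the outside-the-ball estimates come from the subpolynomial growth of $f$ together with the convergence $\mu^{\{n\}}\to\mu$. Throughout, the only real point of care is that the four constants must not depend on $n$, and I secure this in every step by replacing quantities attached to the moving point $\mu^{\{n\}}$ with suprema over the \emph{fixed} compact ball $\overline{B}(\mu,\varepsilon)$.

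For the local bounds, fix $x\in B(\mu^{\{n\}},\varepsilon\slash 2)$ with $n\geq N$. Since $\mu^{\{n\}}\in B(\mu,\varepsilon\slash 2)$, the triangle inequality gives $x\in B(\mu,\varepsilon)$, and, by convexity, the whole segment $[\mu^{\{n\}},x]$ stays in $\overline{B}(\mu,\varepsilon)$, where $f$ is $\mathcal{C}^3$. Applying the integral (Taylor--Lagrange) form of the remainder, $|R_1^{\{n\}}(x)|$ is bounded by $\tfrac12\sup_{\xi\in[\mu^{\{n\}},x]}\|D^2f(\xi)\|\cdot\|x-\mu^{\{n\}}\|^2$ and $|R_2^{\{n\}}(x)|$ by $\tfrac16\sup_{\xi\in[\mu^{\{n\}},x]}\|D^3f(\xi)\|\cdot\|x-\mu^{\{n\}}\|^3$. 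As $D^2f$ and $D^3f$ are continuous on the compact set $\overline{B}(\mu,\varepsilon)$, they are bounded there by constants $M_2,M_3$ that are independent of $n$; taking $C_1=M_2\slash 2$ and $C_1'=M_3\slash 6$ yields the two inside-the-ball inequalities.

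For the global bounds, fix $x$ with $\|x-\mu^{\{n\}}\|\geq\varepsilon\slash 2$ and bound each term of $R_1^{\{n\}}(x)=f(x)-f(\mu^{\{n\}})-Df(\mu^{\{n\}})(x-\mu^{\{n\}})$ and of $R_2^{\{n\}}$ (which adds the term $\tfrac12 D^2f(\mu^{\{n\}})(x-\mu^{\{n\}})$) separately. From subpolynomiality and $\|x\|\leq\|x-\mu^{\{n\}}\|+\|\mu^{\{n\}}\|$, using that $\|\mu^{\{n\}}\|$ is bounded uniformly in $n$ (it converges to $\|\mu\|$), one gets $|f(x)|\leq C_{\sup}\|x-\mu^{\{n\}}\|^k$. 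The Taylor coefficients $|f(\mu^{\{n\}})|$, $\|Df(\mu^{\{n\}})\|$ and $\|D^2f(\mu^{\{n\}})\|$ converge as $n\to\infty$ by continuity of $f$ and its derivatives at $\mu$, hence are bounded uniformly in $n\geq N$; combined with $\|x-\mu^{\{n\}}\|\geq\varepsilon\slash 2$, every constant and every lower-degree factor $\|x-\mu^{\{n\}}\|^{j}$ is at most $(2\slash\varepsilon)^{k-j}\|x-\mu^{\{n\}}\|^{k}$. This absorption into the $k$-th power needs $k$ to dominate the degrees present, i.e.\ $k\geq 1$ for $R_1^{\{n\}}$ and $k\geq 2$ for $R_2^{\{n\}}$; since the subpolynomial bound with exponent $k$ continues to hold with any larger exponent, I may assume $k\geq 2$ without loss of generality. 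Summing the contributions gives the outside-the-ball inequalities with constants $C_2,C_2'$ independent of $n$.

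The step I expect to be the main obstacle is precisely this uniformity in $n$, rather than any single inequality: both regimes rely on the moving center $\mu^{\{n\}}$ being confined to the fixed ball $B(\mu,\varepsilon\slash 2)$ for $n\geq N$, so that all derivative bounds can be taken over the single compact set $\overline{B}(\mu,\varepsilon)$ and all coefficient bounds follow from $\mu^{\{n\}}\to\mu$. The only additional bookkeeping is the harmless enlargement of $k$ needed to convert the low-degree Taylor terms into the $k$-th power away from the ball.
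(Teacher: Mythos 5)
Your proof is correct and follows essentially the same route as the paper's: Taylor's theorem with the remainder bounded by suprema of $D^2f$ and $D^3f$ over the fixed compact ball $\overline{B}(\mu,\varepsilon)$ for the local estimates, and subpolynomial growth combined with the uniform boundedness of $\mu^{\{n\}}$ and absorption of the lower-degree Taylor terms into $\|x-\mu^{\{n\}}\|^k$ (using $\|x-\mu^{\{n\}}\|\geq\varepsilon/2$ and enlarging $k$ if necessary) for the global ones. The paper does exactly this, taking $k\geq 3$ without loss of generality where you take $k\geq 2$.
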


\begin{proof}
Using Taylor's theorem, for all $x\in B(\mu^{\{n\}}, \frac{\varepsilon}{2})$, there exist $\theta_2(n,x), \theta_3(n,x) \linebreak \in ]0,1[$ such that 
\begin{eqnarray*}
f(x) & = &f_0^{\{n\}}+f_1^{\{n\}}(x)+ \frac{1}{2}D^2f(\mu^{\{n\}}+\theta_2(n,x)(x-\mu^{\{n\}}))(x-\mu^{\{n\}})\\
&=& f_0^{\{n\}}+f_1^{\{n\}}(x)+f_2^{\{n\}}(x)\\
 & & + \frac{1}{6}D^3f(\mu^{\{n\}}+\theta_3(n,x)(x-\mu^{\{n\}}))(x-\mu^{\{n\}}).
\end{eqnarray*}
Let $C_1=\frac{1}{2}\max_{x \in \overline{B}(\mu,\varepsilon)}\| D^2f(x)\|$ and $C_1'=\frac{1}{6}\max_{x \in \overline{B}(\mu,\varepsilon)}\| D^3f(x)\|$, where $\|\cdot\|$ also means the operator norm of a multilinear form. Thus, for all $x\in B(\mu,\frac{\varepsilon}{2})$,
$$
|R_1^{\{n\}}(x)|\leq C_1 \|x-\mu^{\{n\}}\|^2,\;\;|R_2^{\{n\}}(x)|\leq C_1' \|x-\mu^{\{n\}}\|^3.
$$
Moreover, $f$ is subpolynomial, so $\exists k \geq 3$, and $C<+\infty$ such that, $\forall x \in \R^p$,
$$
|f(x)|\leq C(1+ \|x\|^k).
$$
Hence, taking $C'= C(2\|\mu\|+2)^k$, we have
$$
|f(x)|\leq   C(1+2^k\|x-\mu^{\{n\}}\|^k+2^k\|\mu^{\{n\}}\|^k)\leq C'(1+\|x-\mu^{\{n\}}\|^k).
$$
Hence, taking $C'':= C'+\max_{y\in \overline{B}(\mu,\varepsilon)}\|Df(y)\|$, we have
$$
|R_1^{\{n\}}(x)|\leq |f(x)|+ \max_{y\in \overline{B}(\mu,\varepsilon)}\|Df(y)\|\| x-\mu^{\{n\}}\| \leq C''(1+\|x-\mu^{\{n\}}\|^k).
$$
Now, taking $C_2:=C''\left(1+ (\frac{2}{\varepsilon})^k\right)$, we have, for all $x \notin B(\mu^{\{n\}}, \varepsilon\slash 2)$,
$$
|R_1^{\{n\}}(x)|\leq  C''+C''\|x-\mu^{\{n\}}\|^k \leq C_2 \| x - \mu^{\{n\}} \|^k.
$$
Similarly, there exists $C_2'<+\infty$ such that
$$
|R_2^{\{n\}}(x)|\leq C_2' \|x-\mu^{\{n\}}\|^k.
$$
\end{proof}

\begin{lm}\label{lm_cov_f1_f2}
We have
$$
 \cov(\E(f_1^{\{n\}}(X^{\{n\}})|X_u^{\{n\}}),f_2^{\{n\}}(X^{\{n\}})|X_u^{\{n\}}))=0.
$$
\end{lm}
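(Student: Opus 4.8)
The plan is to read the statement as the assertion that the two conditional expectations are uncorrelated, i.e. $\cov\bigl(\E(f_1^{\{n\}}(X^{\{n\}})\mid X_u^{\{n\}}),\,\E(f_2^{\{n\}}(X^{\{n\}})\mid X_u^{\{n\}})\bigr)=0$, and to exploit the odd/even parity of the linear and quadratic Taylor terms relative to the centered Gaussian input. First I would set $Z:=X^{\{n\}}-\mu^{\{n\}}$, so that $Z\sim\mathcal{N}(0,\Sigma^{\{n\}})$ is centered, and note that conditioning on $X_u^{\{n\}}$ is the same as conditioning on $Z_u:=X_u^{\{n\}}-\mu_u^{\{n\}}$. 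Writing $g:=Df(\mu^{\{n\}})^T$ and $H:=D^2f(\mu^{\{n\}})$, one has $f_1^{\{n\}}(X^{\{n\}})=g^T Z$ and $f_2^{\{n\}}(X^{\{n\}})=\tfrac12 Z^T H Z$, hence $f_1^{\{n\}}$ is an odd function of $Z$ and $f_2^{\{n\}}$ is an even function of $Z$. Since $Z$ is Gaussian with finite moments of all orders, both $g^TZ$ and $\tfrac12 Z^THZ$ are square integrable and the covariance is well defined.

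Next I would introduce $\phi(z):=\E(g^TZ\mid Z_u=z)$ and $\psi(z):=\E(\tfrac12 Z^THZ\mid Z_u=z)$, and show that $\phi$ is odd and $\psi$ is even. The key input is the distributional symmetry $Z\overset{d}{=}-Z$ of a centered Gaussian vector: the conditional law of $Z$ given $Z_u=-z$ equals the conditional law of $-Z$ given $Z_u=z$. Applying this to $\phi$ gives $\phi(-z)=\E(-g^TZ\mid Z_u=z)=-\phi(z)$, and to $\psi$ gives $\psi(-z)=\E(\tfrac12 Z^THZ\mid Z_u=z)=\psi(z)$, using that the quadratic form is invariant under $Z\mapsto-Z$. (Alternatively, one may just compute these conditional expectations explicitly from Gaussian conditioning: $\phi$ is a linear homogeneous function of $Z_u$ and $\psi$ is a quadratic function of $Z_u$ plus a constant, which makes the parities transparent.)

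Finally I would conclude with the parity argument. Since $\E(\phi(Z_u))=\E(g^TZ)=g^T\E(Z)=0$, the covariance reduces to $\E\bigl(\phi(Z_u)\psi(Z_u)\bigr)$. The product $\phi\psi$ is odd, and $Z_u\overset{d}{=}-Z_u$ because $Z_u$ is centered Gaussian, so the expectation of this odd function vanishes, giving $\cov\bigl(\phi(Z_u),\psi(Z_u)\bigr)=0$. I expect the only genuinely delicate point to be the rigorous justification of the parities of the conditional expectations via the symmetry $Z\overset{d}{=}-Z$ (handling the conditioning carefully); the integrability and the final odd-function cancellation are routine.
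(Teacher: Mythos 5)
Your proof is correct, and it takes a genuinely different route from the paper's. The paper proves the lemma by brute-force expansion: it writes $\E(f_1^{\{n\}}(X^{\{n\}})\mid X_u^{\{n\}})$ and $\E(f_2^{\{n\}}(X^{\{n\}})\mid X_u^{\{n\}})$ explicitly via Gaussian conditioning (using that $\E(A_{-u}\mid A_u)$ is linear in $A_u$ for the centered vector $A=X^{\{n\}}-\mu^{\{n\}}$), multiplies out, and observes that every resulting term is a linear combination of expectations of products of three jointly Gaussian zero-mean variables, which vanish; the one term involving $\E(A_{-u}^T\Gamma_{-u,-u}A_{-u}\mid A_u)$ is handled by the tower property and reduced to the same third-moment fact. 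Your argument packages the same underlying mechanism (the symmetry $Z\overset{d}{=}-Z$ of the centered Gaussian, which is exactly why its odd moments vanish) into a parity statement: the conditional expectation of the odd linear term is odd in $Z_u$, that of the even quadratic term is even, and an odd function integrates to zero against a symmetric law. This avoids the term-by-term bookkeeping entirely and, as a bonus, is valid for any input distribution symmetric about its mean with enough moments, not only the Gaussian; the price is the need to justify the parity of the conditional expectations up to null sets, which you correctly flag and which is handled cleanly either by the distributional identity $\mathcal{L}(Z\mid Z_u=-z)=\mathcal{L}(-Z\mid Z_u=z)$ or by your fallback of computing $\phi$ and $\psi$ explicitly from Gaussian conditioning (linear homogeneous, respectively quadratic plus constant). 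The integrability is immediate since $\phi\psi$ is cubic in a Gaussian. Both proofs are sound; yours is shorter and more conceptual, the paper's is more self-contained in its reliance on standard Gaussian moment computations.
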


\begin{proof}
Let $n\in \N$. To simplify notation, let $A=X^{\{n\}}-\mu^{\{n\}}$, $\beta \in \R^p$ be the vector of the linear application $Df(\mu^{\{n\}})$ and $\Gamma \in \mathcal{M}_p(\R)$ be symmetric the matrix of the quadratic form $\frac{1}{2}D^2f(\mu^{\{n\}})$. Then,
\begin{eqnarray*}
& & \cov(\E(f_1^{\{n\}}(X^{\{n\}})|X_u^{\{n\}}),\E(f_2^{\{n\}}(X^{\{n\}})|X_u^{\{n\}}))\\
&=& \cov(\E(\beta^T A)|A_u), \E(A^T \Gamma A |A_u))\\
&=& \E \left( \left[ \beta_u^T A_u+ \beta_{-u}^T\E(A_{-u}|A_u)\right] \left[A_u^T \Gamma_{u,u}A_u +2 A_u^T \Gamma_{u,-u} \E(A_{-u}|A_u)+ \E (A_{-u}^T \Gamma_{-u,-u} A_{-u}| A_u) \right] \right)\\
&=&\E \left( \left[ \beta_u^T A_u+ \beta_{-u}^T\E(A_{-u}|A_u)\right] \E (A_{-u}^T \Gamma_{-u,-u} A_{-u}| A_u) \right)
\end{eqnarray*}
since all the other terms are linear combinations of expectations of products of three zero-mean Gaussian variables. Indeed, the coefficients of $\E(A_{-u}|A_u)$ are linear combinations of the coefficients of $A_u$. Now,
\begin{eqnarray*}
\E \left( \beta_u^T A_u \times \E (A_{-u}^T \Gamma_{-u,-u} A_{-u}| A_u) \right)&=&\E \left( \E ( \beta_u^T A_u \times A_{-u}^T \Gamma_{-u,-u} A_{-u}| A_u) \right)\\
&=&  \E ( \beta_u^T A_u \times A_{-u}^T \Gamma_{-u,-u} A_{-u})\\
&=&0.
\end{eqnarray*}
Similarly, the term $\E \left( \beta_{-u}\E(A_{-u}|A_u) \E (A_{-u}^T \Gamma_{-u,-u} A_{-u}| A_u) \right)$ is equal to 0.
\end{proof}

\begin{lm}\label{lm_bound_var_cov}
There exists $C_{\sup}<+\infty$ such that, for all $u \subset [1:p]$,
$$
\V(\E(\sqrt{a^{\{n\}} }R_1^{\{n\}}(X^{\{n\}})|X^{\{n\}}_u))\leq\frac{C_{\sup}}{a^{\{n\}}},
$$
and
$$
\left|\cov(\E(\sqrt{a^{\{n\}} } f_1^{\{n\}}(X^{\{n\}})|X_u^{\{n\}}),\E(\sqrt{a^{\{n\}} } R_1^{\{n\}}(X^{\{n\}})|X_u^{\{n\}}))\right|\leq  \frac{C_{\sup}}{a^{\{n\}}}.
$$
\end{lm}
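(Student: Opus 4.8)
The plan is to treat the two bounds separately, using the rescaled Gaussian vector $Z^{\{n\}}:=\sqrt{a^{\{n\}}}(X^{\{n\}}-\mu^{\{n\}})\sim\mathcal{N}(0,a^{\{n\}}\Sigma^{\{n\}})$ as the central object. Under Assumption \ref{assum_approx_linear} the covariance $a^{\{n\}}\Sigma^{\{n\}}$ has eigenvalues bounded above and below away from $0$, so every moment $\E(\|Z^{\{n\}}\|^m)$ is finite and bounded uniformly in $n$, and the tail probabilities $\PP(\|Z^{\{n\}}\|>t)$ obey a Gaussian bound uniform in $n$. For the first bound I would use the conditional Jensen inequality $\V(\E(W|X_u^{\{n\}}))\leq\V(W)\leq\E(W^2)$ with $W=\sqrt{a^{\{n\}}}R_1^{\{n\}}(X^{\{n\}})$, reducing the claim to $\E\big(R_1^{\{n\}}(X^{\{n\}})^2\big)=O(1/(a^{\{n\}})^2)$. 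I would then split the expectation over $B(\mu^{\{n\}},\varepsilon/2)$ and its complement and invoke the two estimates of Lemma \ref{lm_f2_f3}. On the ball, $R_1^{\{n\}}(X^{\{n\}})^2\leq C_1^2\|X^{\{n\}}-\mu^{\{n\}}\|^4=C_1^2(a^{\{n\}})^{-2}\|Z^{\{n\}}\|^4$, whose expectation is $O(1/(a^{\{n\}})^2)$ since $\E(\|Z^{\{n\}}\|^4)$ is bounded. On the complement, written as $\{\|Z^{\{n\}}\|>\varepsilon\sqrt{a^{\{n\}}}/2\}$, the polynomial bound $\|X^{\{n\}}-\mu^{\{n\}}\|^{2k}=(a^{\{n\}})^{-k}\|Z^{\{n\}}\|^{2k}$ is multiplied by the indicator of an event of exponentially small probability, and since $k\geq 3$ this contribution is $o(1/(a^{\{n\}})^2)$.

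For the covariance bound, the naive route through Cauchy--Schwarz only yields $O(1/\sqrt{a^{\{n\}}})$, which is insufficient, and this is where the real work lies. The key is to exploit the exact cancellation of Lemma \ref{lm_cov_f1_f2}: decomposing $R_1^{\{n\}}=f_2^{\{n\}}+R_2^{\{n\}}$, the bilinearity of the covariance together with Lemma \ref{lm_cov_f1_f2} gives
$$
\cov\big(\E(\sqrt{a^{\{n\}}}f_1^{\{n\}}(X^{\{n\}})|X_u^{\{n\}}),\E(\sqrt{a^{\{n\}}}R_1^{\{n\}}(X^{\{n\}})|X_u^{\{n\}})\big)=a^{\{n\}}\,\cov\big(\E(f_1^{\{n\}}(X^{\{n\}})|X_u^{\{n\}}),\E(R_2^{\{n\}}(X^{\{n\}})|X_u^{\{n\}})\big),
$$
so that the dominant quadratic term is annihilated and only the cubic remainder $R_2^{\{n\}}$ survives.

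I would then bound this surviving covariance by Cauchy--Schwarz and control the two conditional variances by their unconditional counterparts. For the linear part, $\V(f_1^{\{n\}}(X^{\{n\}}))=Df(\mu^{\{n\}})\,\Sigma^{\{n\}}\,Df(\mu^{\{n\}})^T=O(1/a^{\{n\}})$, using that $a^{\{n\}}\Sigma^{\{n\}}$ has bounded eigenvalues and that $Df(\mu^{\{n\}})$ is bounded by continuity of $Df$ and $\mu^{\{n\}}\to\mu$. For the cubic remainder, the same ball/complement splitting as above, now with $|R_2^{\{n\}}(x)|\leq C_1'\|x-\mu^{\{n\}}\|^3$ from Lemma \ref{lm_f2_f3}, gives $\E\big(R_2^{\{n\}}(X^{\{n\}})^2\big)=O(1/(a^{\{n\}})^3)$. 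Combining, the prefactor $a^{\{n\}}$ multiplies $\sqrt{O(1/a^{\{n\}})\cdot O(1/(a^{\{n\}})^3)}=O(1/(a^{\{n\}})^2)$, which yields the desired $O(1/a^{\{n\}})$. The main obstacle is precisely the realization that Cauchy--Schwarz must be applied only \emph{after} the quadratic cross-term has been removed via Lemma \ref{lm_cov_f1_f2}; without this cancellation the extra half-power of $a^{\{n\}}$ is lost and the bound degrades to $O(1/\sqrt{a^{\{n\}}})$.
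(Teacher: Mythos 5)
Your proposal is correct and follows essentially the same route as the paper: conditional Jensen plus the ball/complement splitting of Lemma \ref{lm_f2_f3} for the variance bound, and the decomposition $R_1^{\{n\}}=f_2^{\{n\}}+R_2^{\{n\}}$ with the cancellation of Lemma \ref{lm_cov_f1_f2} followed by Cauchy--Schwarz for the covariance bound. The only cosmetic difference is bookkeeping: the paper works with the $\sqrt{a^{\{n\}}}$-rescaled quantities throughout and disposes of the complement term using only $k\geq 3$ rather than the exponential tail, but the estimates are identical.
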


\begin{proof}
Using Lemma \ref{lm_f2_f3}, we have,
\begin{eqnarray*}
\E(|\sqrt{a^{\{n\}} }R_1^{\{n\}}(X^{\{n\}})|^2)& =& \E(|\sqrt{a^{\{n\}} }R_1^{\{n\}}(X^{\{n\}})|^2 \mathds{1}_{\|X_n\| < \frac{\varepsilon}{2}}) + \E(|\sqrt{a^{\{n\}} }R_1^{\{n\}} (X^{\{n\}})|^2 \mathds{1}_{\|X_n\| \geq \frac{\varepsilon}{2}})\\
& \leq &  \frac{C_1^2}{a^{\{n\}}}\E(\|\sqrt{a^{\{n\}} } ( X^{\{n\}}-\mu^{\{n\}})\|^{4})\\
& & +\frac{C_2^2}{a^{\{n\}(k-1) }}\E(\|\sqrt{a^{\{n\}} } ( X^{\{n\}}-\mu^{\{n\}})\|^{2k})\\
& \leq & \frac{C_{\sup}}{a^{\{n\}}},
\end{eqnarray*}
since $a^{\{n\}}\Sigma^{\{n\}}$ is bounded.
Hence,
$$
\V(\sqrt{a^{\{n\}}}R_1^{\{n\}}(X^{\{n\}}))\leq \frac{C_{\sup}}{a^{\{n\}}}.
$$
Moreover, for all $u \subset[1:p]$,
$$
0\leq \V(\E(a^{\{n\}}R_1^{\{n\}}(X^{\{n\}})|X^{\{n\}}_u))\leq \V(a^{\{n\}}R_1^{\{n\}}(X^{\{n\}}))\leq \frac{C_{\sup}}{a^{\{n\}}}.
$$
For all $u\subset [1:p]$,
\begin{eqnarray*}
& & \cov(\E(\sqrt{a^{\{n\}}}f_1^{\{n\}}(X^{\{n\}})|X_u^{\{n\}}),\E(\sqrt{a^{\{n\}}}R_1^{\{n\}}(X^{\{n\}})|X_u^{\{n\}}))\\
&=&\cov(\E(\sqrt{a^{\{n\}}}f_1^{\{n\}}(X^{\{n\}})|X_u^{\{n\}}),\E(\sqrt{a^{\{n\}}}f_2^{\{n\}}(X^{\{n\}})|X_u^{\{n\}}))\\
&& + \cov(\E(\sqrt{a^{\{n\}}}f_1^{\{n\}}(X^{\{n\}})|X_u^{\{n\}}),\E(\sqrt{a^{\{n\}}}R_2^{\{n\}}(X^{\{n\}})|X_u^{\{n\}}))\\
&=& \cov(\E(\sqrt{a^{\{n\}}}f_1^{\{n\}}(X^{\{n\}})|X_u^{\{n\}}),\E(\sqrt{a^{\{n\}}}R_2^{\{n\}}(X^{\{n\}})|X_u^{\{n\}})),
\end{eqnarray*}
using Lemma \ref{lm_cov_f1_f2}. Now, by Cauchy-Schwarz inequality,
\begin{eqnarray*}
& & \left| \cov(\E(\sqrt{a^{\{n\}}}f_1^{\{n\}}(X^{\{n\}})|X_u^{\{n\}}),\E(\sqrt{a^{\{n\}}}R_2^{\{n\}}(X^{\{n\}})|X_u^{\{n\}})) \right|\\
& \leq & \sqrt{\V (\sqrt{a^{\{n\}}}f_1^{\{n\}}(X^{\{n\}})|X_u^{\{n\}})}\sqrt{\V(\sqrt{a^{\{n\}}}R_2^{\{n\}}(X^{\{n\}})|X_u^{\{n\}})}\\
& \leq &  \sqrt{\V (\sqrt{a^{\{n\}}}f_1^{\{n\}}(X^{\{n\}}))}\sqrt{\V(\sqrt{a^{\{n\}}}R_2^{\{n\}}(X^{\{n\}}))}.
\end{eqnarray*}
Now, by Lemma \ref{lm_f2_f3}, we have,
\begin{eqnarray*}
& & \E(|\sqrt{a^{\{n\}}}R_2^{\{n\}}(X^{\{n\}})|^2)\\
& =& \E(|\sqrt{a^{\{n\}}}R_2^{\{n\}}(X^{\{n\}})|^2 \mathds{1}_{\|X_n\| \leq \frac{\varepsilon}{2}}) + \E(|\sqrt{a^{\{n\}}}R_2^{\{n\}} (X^{\{n\}})|^2 \mathds{1}_{\|X_n\| \geq \frac{\varepsilon}{2}})\\
& \leq &  \frac{C_1^2}{a^{\{n\}2}}\E(\|\sqrt{a^{\{n\}}} ( X^{\{n\}}-\mu^{\{n\}})\|^{6})\\
& & +\frac{C_2^2}{a^{\{n\}(k-1) }}\E(\|\sqrt{a^{\{n\}}} ( X^{\{n\}}-\mu^{\{n\}})\|^{k\times 2})\\
& \leq & \frac{C_{\sup}}{a^{\{n\}2}}.
\end{eqnarray*}
Furthermore,
\begin{eqnarray*}
\V (\sqrt{a^{\{n\}}}f_1^{\{n\}}(X^{\{n\}}))
& \leq &  \max_{x \in \overline{B}(\mu^{\{n\}},\varepsilon\slash 2)} \| Df(x)\|\E\left( \|\sqrt{a^{\{n\}}}(X^{\{n\}}-\mu^{\{n\}})\|\right)\\
& \leq & C_{\sup}.
\end{eqnarray*}
Finally,
$$
\left| \cov(\E(\sqrt{a^{\{n\}}}f_1^{\{n\}}(X^{\{n\}})|X_u^{\{n\}}),\E(\sqrt{a^{\{n\}}}R_1^{\{n\}}(X^{\{n\}})|X_u^{\{n\}})) \right| \leq \frac{C_{\sup}}{a^{\{n\}}},
$$
that concludes the proof of Lemma \ref{lm_bound_var_cov}.
\end{proof}

\begin{lm}\label{lm_Su1}
For all $u\subset [1:p]$,
$$
S_u^{cl}(X^{\{n\}},f)=S_u^{cl}(X^{\{n\}},f_1^{\{n\}})+O\left( \frac{1}{a^{\{n\}}}\right).
$$

\end{lm}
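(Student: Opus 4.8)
The plan is to reduce the statement to the two estimates already supplied by Lemma \ref{lm_bound_var_cov}, using the fact that the closed Sobol index $S_u^{cl}$ is invariant under adding a constant to the model and under multiplying it by a nonzero scalar. Since $f=f_0^{\{n\}}+f_1^{\{n\}}+R_1^{\{n\}}$ with $f_0^{\{n\}}=f(\mu^{\{n\}})$ constant, I would first write $\sqrt{a^{\{n\}}}\,(f-f_0^{\{n\}})=g^{\{n\}}+r^{\{n\}}$, where $g^{\{n\}}:=\sqrt{a^{\{n\}}}\,f_1^{\{n\}}$ and $r^{\{n\}}:=\sqrt{a^{\{n\}}}\,R_1^{\{n\}}$. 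By scale and shift invariance, $S_u^{cl}(X^{\{n\}},f)=\V(\E(g^{\{n\}}+r^{\{n\}}\mid X_u^{\{n\}}))/\V(g^{\{n\}}+r^{\{n\}})$ and $S_u^{cl}(X^{\{n\}},f_1^{\{n\}})=\V(\E(g^{\{n\}}\mid X_u^{\{n\}}))/\V(g^{\{n\}})$, so the whole problem becomes a perturbation estimate for a quotient.

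Next I would expand numerator and denominator. For the numerator, $\V(\E(g^{\{n\}}+r^{\{n\}}\mid X_u^{\{n\}}))$ splits as $\V(\E(g^{\{n\}}\mid X_u^{\{n\}}))$ plus a cross term and $\V(\E(r^{\{n\}}\mid X_u^{\{n\}}))$; the latter two are $O(1/a^{\{n\}})$ directly by the two bounds of Lemma \ref{lm_bound_var_cov}. For the denominator, $\V(g^{\{n\}}+r^{\{n\}})=\V(g^{\{n\}})+2\cov(g^{\{n\}},r^{\{n\}})+\V(r^{\{n\}})$, where $\V(r^{\{n\}})\leq C_{\sup}/a^{\{n\}}$ is the intermediate bound on $\V(\sqrt{a^{\{n\}}}R_1^{\{n\}})$ established inside the proof of Lemma \ref{lm_bound_var_cov}, and $\cov(g^{\{n\}},r^{\{n\}})=O(1/a^{\{n\}})$ follows from the covariance bound of that lemma taken with $u=[1:p]$, since conditioning on all coordinates is the identity. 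Thus both the numerator and denominator of $S_u^{cl}(X^{\{n\}},f)$ equal the corresponding quantities of $S_u^{cl}(X^{\{n\}},f_1^{\{n\}})$ up to an additive $O(1/a^{\{n\}})$.

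The remaining and decisive ingredient is a uniform lower bound on the denominator $\V(g^{\{n\}})$. Writing $\beta^{\{n\}}$ for the gradient row vector $Df(\mu^{\{n\}})$, one has $\V(g^{\{n\}})=\beta^{\{n\}}\,(a^{\{n\}}\Sigma^{\{n\}})\,\beta^{\{n\}T}$, which by the eigenvalue lower bound in Assumption \ref{assum_approx_linear} is at least $C_{\inf}\|\beta^{\{n\}}\|^2$. Because $Df$ is continuous, $\mu^{\{n\}}\to\mu$ and $Df(\mu)\neq 0$, the norm $\|\beta^{\{n\}}\|$ is bounded below by a strictly positive constant for $n$ large, so $\V(g^{\{n\}})\geq C_{\inf}$; the eigenvalue upper bound similarly gives $\V(g^{\{n\}})\leq C_{\sup}$, and $\V(\E(g^{\{n\}}\mid X_u^{\{n\}}))\leq\V(g^{\{n\}})\leq C_{\sup}$.

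Finally I would combine these. Setting $A=\V(\E(g^{\{n\}}\mid X_u^{\{n\}}))$, $B=\V(g^{\{n\}})$ and the perturbations $\delta_1,\delta_2=O(1/a^{\{n\}})$ found above, the difference of the two indices is $(A+\delta_1)/(B+\delta_2)-A/B=(\delta_1 B-A\delta_2)/(B(B+\delta_2))$. The numerator is $O(1/a^{\{n\}})$ because $A$ and $B$ are bounded, and the denominator is bounded below because $B\geq C_{\inf}$ and $\delta_2\to 0$, which yields the claimed $O(1/a^{\{n\}})$. I expect the main obstacle to be precisely this lower bound on $\V(g^{\{n\}})$: without combining the eigenvalue lower bound of Assumption \ref{assum_approx_linear} with $Df(\mu)\neq 0$ and the continuity of $Df$, the denominator could degenerate and the quotient estimate would break down; everything else reduces to routine variance/covariance expansions feeding on Lemma \ref{lm_bound_var_cov}.
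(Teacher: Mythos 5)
Your proposal is correct and follows essentially the same route as the paper: decompose $f$ into its constant part, $f_1^{\{n\}}$ and $R_1^{\{n\}}$, rescale by $\sqrt{a^{\{n\}}}$, control the cross term and remainder variance via Lemma \ref{lm_bound_var_cov} (including the $u=[1:p]$ case for the denominator), and close the quotient estimate with the lower bound $\V(\sqrt{a^{\{n\}}}f_1^{\{n\}}(X^{\{n\}}))=Df(\mu^{\{n\}})(a^{\{n\}}\Sigma^{\{n\}})Df(\mu^{\{n\}})^T\geq C_{\inf}$ coming from Assumption \ref{assum_approx_linear} and $Df(\mu)\neq 0$. You correctly identify this lower bound as the decisive ingredient, exactly as in the paper's argument.
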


\begin{proof}
We have
$$
f(X^{\{n\}})=f(\mu^{\{n\}})+f_1^{\{n\}}(X^{\{n\}})+R_1^{\{n\}}(X^{\{n\}}).
$$
For all $u \subset[1:p]$, we have 
$$
\E(f(X^{\{n\}})|X_u^{\{n\}})=f(\mu^{\{n\}})+\E(f_1^{\{n\}}(X^{\{n\}})|X_u^{\{n\}})+\E(R_1^{\{n\}}(X^{\{n\}})|X_u^{\{n\}}),
$$
so,
\begin{eqnarray*}
& & a^{\{n\}}\V(\E(f(X^{\{n\}})|X_u^{\{n\}}))\\
&=& \V(\E(\sqrt{a^{\{n\}}} f_1^{\{n\}} (X^{\{n\}}) | X_u^{\{n\}})) + \V(\E(\sqrt{a^{\{n\}}}R_1^{\{n\}}(X^{\{n\}})|X_u^{\{n\}}))\\ & & + 2 \cov(\E(\sqrt{a^{\{n\}}}f_1^{\{n\}}(X^{\{n\}})|X_u^{\{n\}}),\E(\sqrt{a^{\{n\}}}R_1^{\{n\}}(X^{\{n\}})|X_u^{\{n\}}))\\
&=& \V(\E(\sqrt{a^{\{n\}}}f_1^{\{n\}}(X^{\{n\}})|X_u^{\{n\}}))+ O(\frac{1}{a^{\{n\}}}),
\end{eqnarray*}
by Lemma \ref{lm_bound_var_cov}.
Hence, for $u=[1:p]$, we have
$$
a^{\{n\}}\V(f(X^{\{n\}}))=\V(\sqrt{a^{\{n\}}}f_1^{\{n\}}(X^{\{n\}})) + O(\frac{1}{a^{\{n\}}}).
$$

Thus, for all $u\subset [1:p]$,
\begin{eqnarray*}
S_u^{cl}(X^{\{n\}},f)&=&\frac{\V(\E(f(X^{\{n\}})|X_u^{\{n\}}))}{\V(f(X^{\{n\}}))}\\
&=&\frac{a^{\{n\}}\V(\E(f(X^{\{n\}})|X_u^{\{n\}}))}{a^{\{n\}}\V(f(X^{\{n\}}))}\\
& =& \frac{\V(\E( \sqrt{a^{\{n\}}}f_1^{\{n\}}(X^{\{n\}})|X_u^{\{n\}}))+O(\frac{1}{a^{\{n\}}})}{\V( \sqrt{a^{\{n\}}} f_1^{\{n\}}(X^{\{n\}})) + O(\frac{1}{a^{\{n\}}})}\\
&=&\frac{\V(\sqrt{a^{\{n\}}}f_1^{\{n\}}(X^{\{n\}})|X_u^{\{n\}})}{\V(\sqrt{a^{\{n\}}} f_1^{\{n\}}(X^{\{n\}}))}+O(\frac{1}{a^{\{n\}}})\\
&=&S_u^{cl}(X^{\{n\}},f_1^{\{n\}})+O\left( \frac{1}{a^{\{n\}}}\right),
\end{eqnarray*}
where we used that, 
\begin{eqnarray*}
\V(\sqrt{a^{\{n\}}} f_1^{\{n\}}(X^{\{n\}}))&= & Df(\mu^{\{n\}})(a^{\{n\}}\Sigma^{\{n\}})Df(\mu^{\{n\}})^T \\
& \geq & \lambda_{\min}(a^{\{n\}}\Sigma^{\{n\}})\inf_{x\in \overline{B}(\mu,\varepsilon\slash 2)}\|Df(x)\|^2\\
&\geq & C_{inf}.
\end{eqnarray*}

\end{proof}

Now we have proved the convergence of the closed Sobol indices, we can prove Proposition \ref{prop_linear1} easily.

\begin{proof}
By Lemma \ref{lm_Su1} and applying the linearity of the Shapley effects with respect to the Sobol indices, we have
$$
\eta(X^{\{n\}},f)=\eta(X^{\{n\}},f_1^{\{n\}})+O(\frac{1}{a^{\{n\}}}).
$$
\end{proof}

\textbf{Proof of Remark \ref{rmk}}
\begin{proof}
Let  $X^{\{n\}}=(X^{\{n\}}_1,X^{\{n\}}_2)\sim \mathcal{N}(0,\frac{1}{a^{\{n\}}} I_2)$ and $Y^{\{n\}}=f(X^{\{n\}})=X_1^{\{n\}}+X_2^{\{n\}2}$, we have $f_1^{\{n\}}(X^{\{n\}})=X_1^{\{n\}}$ and $R_1^{\{n\}}(X^{\{n\}})=X_2^{\{n\}2}$. Thus, $\eta_1(X^{\{n\}},f_1^{\{n\}}) =1$ and $\eta_2(X^{\{n\}},f_1^{\{n\}})=0$.
Now, let us compute the Shapley effects $\eta(X^{\{n\}},f)$. We have
\begin{eqnarray*}
\V(f(X^{\{n\}}))&=&\V(X_1^{\{n\}})+\V(X_2^{\{n\}2})\\
&=&\V(X_1^{\{n\}})+\E(X_2^{\{n\}4})- \E(X_2^{\{n\}2})^2\\
&=&\frac{1}{a^{\{n\}}}+ \frac{3}{a^{\{n\}2}}-\frac{1}{a^{\{n\}2}}\\
&=&\frac{a^{\{n\}}+2}{a^{\{n\}2}}.
\end{eqnarray*}
Moreover,
$$
\V(\E(f(X^{\{n\}})|X_1^{\{n\}}))=\V(X_1^{\{n\}}+ \frac{1}{a^{\{n\}}})=\V(X_1^{\{n\}})=\frac{1}{a^{\{n\}}}
$$
and 
$$
\V(\E(f(X^{\{n\}})|X_2^{\{n\}}))=\V(X_2^{\{n\}2})=\E(X_2^{\{n\}4})- \E(X_2^{\{n\}2})^2=\frac{3-1}{a^{\{n\}2}}= \frac{2}{a^{\{n\}2}}.
$$
Hence,
\begin{eqnarray*}
\eta_1(X^{\{n\}},f)&=& \frac{a^{\{n\}2}}{(a^{\{n\}}+2) 2}\left( \frac{1}{a^{\{n\}}}+ \frac{a^{\{n\}}+2}{a^{\{n\}2}} - \frac{2}{a^{\{n\}2}}\right)= \frac{a^{\{n\}}}{a^{\{n\}}+2},
\end{eqnarray*}
and
$$
\eta_2(X^{\{n\}},f)=\frac{2}{a^{\{n\}}+2}.
$$
\end{proof}

\textbf{Proof of Proposition \ref{prop_linear2}}

As in the proof of Proposition \ref{prop_linear1}, we first prove the convergence for the closed Sobol indices. To simplify notation, let $\Gamma^{\{n\}}:=a^{\{n\}}\Sigma^{\{n\}}$.

\begin{lm}\label{lm_linear_prop2}
Under the assumptions of Proposition \ref{prop_linear2}, for all $u\subset[1:p]$, we have $$ 
S_u^{cl}(f_1^{\{n\}}(X^{\{n\}}))=S_u^{cl}(f_1(X^*))+O(\| \mu^{\{n\}}-\mu\|)+O(\|\Gamma^{\{n\}}-\Sigma\| ).
$$
\end{lm}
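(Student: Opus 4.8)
The plan is to reduce the lemma to a single quantitative continuity estimate for one fixed, explicit rational function. First I would record the closed Sobol index of a centered linear model under a Gaussian input: if $Z\sim\mathcal{N}(m,C)$ and $g(x)=b^T(x-m)$, then $\E(g(Z)\mid Z_u)=\big(b_u+C_{u,u}^{-1}C_{u,-u}b_{-u}\big)^T(Z_u-m_u)$ and $\V(g(Z))=b^TCb$, so that
$$
S_u^{cl}(Z,g)=1-\frac{b_{-u}^T\big(C_{-u,-u}-C_{-u,u}C_{u,u}^{-1}C_{u,-u}\big)b_{-u}}{b^TCb},
$$
the subtracted numerator being exactly the conditional variance \eqref{eq_V}. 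Two features of this expression are decisive: it does not depend on $m$, and it is invariant under the rescaling $C\mapsto\lambda C$ for any $\lambda>0$, since the numerator and the denominator of the fraction are both homogeneous of degree one in $C$. Applying it to $f_1^{\{n\}}$ with $(b,C)=\big(Df(\mu^{\{n\}})^T,\Sigma^{\{n\}}\big)$ and rescaling by $\lambda=a^{\{n\}}$, I may replace $\Sigma^{\{n\}}$ by $\Gamma^{\{n\}}$. Denoting by $\Phi_u(b,C)$ the right-hand side above, this yields $S_u^{cl}(X^{\{n\}},f_1^{\{n\}})=\Phi_u\big(Df(\mu^{\{n\}})^T,\Gamma^{\{n\}}\big)$ and $S_u^{cl}(X^*,f_1)=\Phi_u\big(Df(\mu)^T,\Sigma\big)$.

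The statement then reduces to a quantitative continuity estimate for $\Phi_u$ at the point $\big(Df(\mu)^T,\Sigma\big)$. I would prove that $\Phi_u$ is $\mathcal{C}^1$, hence locally Lipschitz, on a neighbourhood of this point, so that for $(b,C)$ close enough
$$
\big|\Phi_u(b,C)-\Phi_u(Df(\mu)^T,\Sigma)\big|\le C_{\sup}\big(\|b-Df(\mu)^T\|+\|C-\Sigma\|\big).
$$
Because $\mu^{\{n\}}\to\mu$ and $\Gamma^{\{n\}}\to\Sigma$, the pair $\big(Df(\mu^{\{n\}})^T,\Gamma^{\{n\}}\big)$ lies in this neighbourhood for $n$ large, and the bound becomes $O\big(\|Df(\mu^{\{n\}})-Df(\mu)\|\big)+O\big(\|\Gamma^{\{n\}}-\Sigma\|\big)$. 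The second term is the announced $O(\|\Gamma^{\{n\}}-\Sigma\|)$; the first is controlled by $O(\|\mu^{\{n\}}-\mu\|)$ upon bounding $\|Df(\mu^{\{n\}})-Df(\mu)\|$ through the local regularity of the gradient near $\mu$, which is the only way the mean affects the index.

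The real work, and the main obstacle, is to justify the regularity and non-degeneracy underlying the Lipschitz bound; this is exactly where the hypotheses $\Sigma\in S_p^{++}(\R)$, $Df(\mu)\neq0$ and Assumption \ref{assum_approx_linear} enter. At $\big(Df(\mu)^T,\Sigma\big)$ the denominator equals $Df(\mu)\,\Sigma\,Df(\mu)^T\ge\lambda_{\min}(\Sigma)\|Df(\mu)\|^2>0$, so it stays bounded away from $0$ on a neighbourhood and $\Phi_u$ has no pole there; moreover $\Sigma_{u,u}\in S_{|u|}^{++}(\R)$ is invertible, so $C\mapsto C_{u,u}^{-1}$ is smooth near $\Sigma$ and the Schur-complement numerator is a smooth function of $(b,C)$. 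Combining these two facts gives that $\Phi_u$ is $\mathcal{C}^1$ on a neighbourhood of $\big(Df(\mu)^T,\Sigma\big)$, which closes the argument. Finally, the degenerate index sets require no computation: $S_\emptyset^{cl}\equiv0$ and $S_{[1:p]}^{cl}\equiv1$ for every linear model, so the estimate is trivial there, and it suffices to treat $\emptyset\neq u\subsetneq[1:p]$, for which the formula for $\Phi_u$ above is well defined.
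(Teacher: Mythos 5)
Your proposal is correct and follows essentially the same route as the paper: both reduce the closed Sobol index to the explicit rational expression in $\big(Df(\mu^{\{n\}}),\Gamma^{\{n\}}\big)$ via $\V(\E(Y\mid X_u))=\V(Y)-\E(\V(Y\mid X_u))$ and the Gaussian Schur-complement formula, then conclude by Lipschitz continuity of that expression near $\big(Df(\mu)^T,\Sigma\big)$, using $Df(\mu)\neq 0$ and $\Sigma\in S_p^{++}(\R)$ to keep the denominator bounded away from zero. Your packaging of the argument as a single locally Lipschitz function $\Phi_u$ with explicit scale invariance is only a cosmetic reorganization of the paper's term-by-term expansion, and you share with the paper the same (harmless, since the paper makes it too) reliance on local Lipschitz continuity of $Df$ to convert $\|Df(\mu^{\{n\}})-Df(\mu)\|$ into $O(\|\mu^{\{n\}}-\mu\|)$.
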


\begin{proof}
We have
\begin{eqnarray*}
& & \V(\sqrt{a^{\{n\}}} f_1^{\{n\}}(X^{\{n\}}))-\V(f_1(X^*))\\
&=&Df(\mu^{\{n\}})\Gamma^{\{n\}} Df(\mu^{\{n\}})^T-Df(\mu)\Sigma Df(\mu )^T \\
&=& Df(\mu^{\{n\}})\Gamma^{\{n\}} \left[Df(\mu^{\{n\}})^T-Df(\mu)^T\right]+ Df(\mu^{\{n\}})\left[\Gamma^{\{n\}}- \Sigma\right] Df(\mu)^T   \\
 & &  \left[Df(\mu^{\{n\}})-Df(\mu)\right] \Sigma Df(\mu) ^T \\
&=& O(\| Df(\mu^{\{n\}})-Df(\mu)\|)+O(\|\Gamma^{\{n\}}-\Sigma\| )\\
&=&  O(\| \mu^{\{n\}}-\mu\|)+O(\|\Gamma^{\{n\}}-\Sigma\| ),
\end{eqnarray*}
using that $Df$ is Lipschitz continuous on a neighbourhood of $\mu$ (thanks to the continuity of $D^2f$).

Moreover, for all $\emptyset \varsubsetneq u\varsubsetneq[1:p]$, we have
\begin{eqnarray*}
& & \V(\E(\sqrt{a^{\{n\}}} f_1^{\{n\}}(X^{\{n\}})|X_u^{\{n\}}))-  \V(\E(f_1(X^*)|X^*_u))\\
&=&\V(\sqrt{a^{\{n\}}}f_1^{\{n\}}(X^{\{n\}}))-\E(\V(\sqrt{a^{\{n\}}}f_1(X^{\{n\}})|X_{-u}^{\{n\}})) -  \V(f_1(X^*))+ \E(\V(f_1(X^*)|X^*_u)) \\
&=&Df(\mu^{\{n\}})\Gamma^{\{n\}} Df(\mu^{\{n\}})^T - Df(\mu^{\{n\}})_{u}(\Gamma^{\{n\}}_{u,u}-\Gamma^{\{n\}}_{u,-u}\Gamma^{\{n\}-1}_{-u,-u}\Gamma^{\{n\}}_{-u,u}) Df(\mu^{\{n\}})_u^T\\
& &-  Df(\mu)\Sigma Df(\mu)^T - Df(\mu)_{u}(\Sigma_{u,u}-\Sigma_{u,-u}\Sigma_{-u,-u}^{-1}\Sigma_{-u,u}) Df(\mu)_u^T\\
&=& O(\| \mu^{\{n\}}-\mu\|)+O(\|\Gamma^{\{n\}}-\Sigma\| ),
\end{eqnarray*}
proceeding as previously and using the fact that the operator norm of a submatrix is smaller than the operator norm of the whole matrix.

Hence,
$$
S_u^{cl}(X^{\{n\}},f_1^{\{n\}})=S_u^{cl}(X^*,f_1)+O(\| \mu^{\{n\}}-\mu\|)+O(\|\Gamma^{\{n\}}-\Sigma\| ).
$$

\end{proof}

Now, we can easily prove Proposition \ref{prop_linear2}.
\begin{proof}
By Lemma \ref{lm_linear_prop2} and applying the linearity of the Shapley effects with respect to the Sobol indices, we have
$$
\eta(f_1^{\{n\}}(X^{\{n\}}))=\eta(f_1(X^*))+O(\| \mu^{\{n\}}-\mu\|)+O(\|\Gamma^{\{n\}}-\Sigma\| ).
$$
\end{proof}

\textbf{Proof of Proposition \ref{prop_linear_Dh}}

Under the assumption of Proposition \ref{prop_linear_Dh}, let $\varepsilon >0$ be such that $f$ is $\mathcal{C}^3$ on $\overline{B}(\mu,\varepsilon)$ and such that, for all $x\in \overline{B}(\mu,\varepsilon)$, we have $Df(x)\neq 0$. Since $\mu^{\{n\}}$ converges to $\mu$, there exists $N\in \N$ such that, for all $n\geq N$, $\mu^{\{n\}}\in B(\mu,\varepsilon\slash 2)$. In the following, we assume that $n$ is larger than $N$.

\begin{lm}\label{lm_linear_Dh1}
For all $x\in \overline{B}(\mu,\frac{\varepsilon}{2})$ and $h\in (\R_+^*)^p$ such that $\|h\| \leq \frac{\varepsilon}{2}$, we have
$$
\| \widehat{D}_h f(x)-Df(x)\| \leq \frac{1}{6}\max_{i\in [1:p]} \max_{y \in \overline{B}(\mu,\varepsilon)}|\partial_i^3 f(y)| \|h\|^2
$$
\end{lm}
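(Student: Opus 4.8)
The plan is to treat the finite-difference error one coordinate at a time, using a one-dimensional third-order Taylor expansion, and then recombine. Fix $x\in \overline{B}(\mu,\varepsilon/2)$ and $h\in (\R_+^*)^p$ with $\|h\|\leq \varepsilon/2$. First I would check that all the evaluation points stay in the region where $f$ is $\mathcal{C}^3$: since $h_i \leq \|h\|\leq \varepsilon/2$, both points $x\pm e_i h_i$ satisfy $\|x\pm e_i h_i-\mu\|\leq \|x-\mu\|+h_i\leq \varepsilon$, and by convexity the whole segment joining them lies in $\overline{B}(\mu,\varepsilon)$. Hence the restriction $t\mapsto f(x+t e_i)$ is $\mathcal{C}^3$ on $[-h_i,h_i]$, which is exactly what Taylor's theorem requires.

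Next I would apply Taylor's theorem with Lagrange remainder at order three to this restriction, for both $+h_i$ and $-h_i$:
$$
f(x\pm e_i h_i)=f(x)\pm \partial_i f(x)\,h_i+\tfrac{1}{2}\partial_i^2 f(x)\,h_i^2\pm \tfrac{1}{6}\partial_i^3 f(\xi_i^\pm)\,h_i^3,
$$
for suitable points $\xi_i^\pm$ on the respective segments. Subtracting the two expansions, the constant and second-order terms cancel while the first-order terms add, giving
$$
\frac{f(x+e_i h_i)-f(x-e_i h_i)}{2h_i}=\partial_i f(x)+\frac{h_i^2}{12}\bigl(\partial_i^3 f(\xi_i^+)+\partial_i^3 f(\xi_i^-)\bigr).
$$
Bounding each third derivative by $M_i:=\max_{y\in \overline{B}(\mu,\varepsilon)}|\partial_i^3 f(y)|$ then yields $\bigl|(\widehat{D}_h f(x))_i-\partial_i f(x)\bigr|\leq \tfrac{1}{6}M_i h_i^2$ for every $i$.

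Finally I would recombine the coordinates. Writing $M:=\max_i M_i$, each component of the error vector is bounded in absolute value by $\tfrac{1}{6}M h_i^2$, so
$$
\|\widehat{D}_h f(x)-Df(x)\|^2\leq \frac{M^2}{36}\sum_{i=1}^p h_i^4\leq \frac{M^2}{36}\Bigl(\sum_{i=1}^p h_i^2\Bigr)^2=\frac{M^2}{36}\|h\|^4,
$$
where the middle inequality uses $\sum_i h_i^4\leq (\sum_i h_i^2)^2$ (all summands being non-negative). Taking square roots gives precisely the claimed bound.

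The only genuine obstacle is the bookkeeping in the first step: one must verify that $x\pm e_i h_i$ and the intermediate evaluation points all remain inside $\overline{B}(\mu,\varepsilon)$, so that the third derivatives are defined and uniformly bounded there. This is exactly where the hypotheses $x\in\overline{B}(\mu,\varepsilon/2)$ and $\|h\|\leq \varepsilon/2$ are used. Everything else is the standard second-order accuracy of the centered difference, which is what produces the $O(\|h\|^2)$ rate rather than $O(\|h\|)$.
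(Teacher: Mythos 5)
Your proof is correct and follows essentially the same route as the paper: a coordinate-wise third-order Taylor expansion of $t\mapsto f(x+te_i)$, cancellation of the even-order terms in the centered difference to get the error term $\frac{h_i^2}{12}(\partial_i^3 f(\xi_i^+)+\partial_i^3 f(\xi_i^-))$, and then a recombination of the componentwise bounds into $\frac{1}{6}M\|h\|^2$. The only (immaterial) difference is the final aggregation step — you square and use $\sum_i h_i^4\leq(\sum_i h_i^2)^2$, while the paper bounds the Euclidean norm by the $\ell^1$ norm and sums $h_i^2$ directly — and your explicit check that all evaluation points stay in $\overline{B}(\mu,\varepsilon)$ is a welcome addition.
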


\begin{proof}
Let $x\in \overline{B}(\mu,\frac{\varepsilon}{2})$ and $h\in (\R_+^*)^p$ such that $\|h\| \leq \frac{\varepsilon}{2}$. For all $i\in [1:p]$, using Taylor's theorem, there exist $\theta_{x,h,i}^+,\theta_{x,h,i}^- \in ]0,1[$ such that 
$$
\frac{f\left( x+e_i h_i\right)-f\left( x-e_i h_i\right)}{2h_i}=\partial_if(x)+\frac{h_i^2}{12}\left( \partial_i^3 f(x+\theta_{x,h,i}^+h)+ \partial_i^3 f(x-\theta_{x,h,i}^-h)\right).
$$
Hence,
\begin{eqnarray*}
\| \widehat{D}_h f(x)-Df(x)\| & \leq & 
\sum_{i=1}^p \left| \left[\widehat{D}_h f(x)-Df(x)\right]_i \right| \\ &\leq &  \frac{1}{6} \max_{i\in [1:p]} \max_{y \in \overline{B}(\mu,\varepsilon)}| \partial_i^3 f(y)| \sum_{i=1}^p h_i^2\\
 & = &  \frac{1}{6}\max_{i\in [1:p]} \max_{y \in \overline{B}(\mu,\varepsilon)}|\partial_i^3 f(y)| \|h\|^2.
 \end{eqnarray*}
\end{proof}

\begin{lm}\label{lm_linear_Dh2}
For all linear functions $l_1$ and $l_2$ from $\R^p$ to $\R$, we have
$$
\left| \V(\E(l_1(X^{\{n\}})|X_u^{\{n\}})-
\V(\E(l_2(X^{\{n\}})|X_u^{\{n\}})\right| \leq \frac{C_{\sup}}{a^{\{n\}}}  \|l_1-l_2 \|.
$$
\end{lm}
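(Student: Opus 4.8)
The plan is to reduce the claim to a statement about the gradients of $l_1$ and $l_2$, and then to exploit the algebraic identity $a^2-b^2=(a-b)(a+b)$ together with the fact that conditional expectation is an $L^2$-contraction. First I would write $l_i(x)=c_i+\beta_i^T x$; since adding a constant does not change a variance, only the gradients $\beta_1,\beta_2$ matter, and I identify the operator norm $\|l_1-l_2\|$ with the Euclidean norm $\|\beta_1-\beta_2\|$. Setting $W_i:=\E(l_i(X^{\{n\}})\mid X_u^{\{n\}})$ and writing $\tilde W_i$ for the centered version, the quantity to bound is $\V(W_1)-\V(W_2)=\E(\tilde W_1^2)-\E(\tilde W_2^2)=\E\big((\tilde W_1-\tilde W_2)(\tilde W_1+\tilde W_2)\big)$, so by Cauchy--Schwarz and the triangle inequality in $L^2$,
$$
\left|\V(W_1)-\V(W_2)\right|\le \sqrt{\V(W_1-W_2)}\,\Big(\sqrt{\V(W_1)}+\sqrt{\V(W_2)}\Big).
$$

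The next step is to bound each factor by a multiple of $1/\sqrt{a^{\{n\}}}$. By linearity of conditional expectation, $W_1-W_2=\E((l_1-l_2)(X^{\{n\}})\mid X_u^{\{n\}})$, and the contraction property (the law of total variance, or Jensen) gives $\V(W_1-W_2)\le \V((l_1-l_2)(X^{\{n\}}))=(\beta_1-\beta_2)^T\Sigma^{\{n\}}(\beta_1-\beta_2)$. Assumption \ref{assum_approx_linear} upper-bounds the largest eigenvalue of $a^{\{n\}}\Sigma^{\{n\}}$, so $\lambda_{\max}(\Sigma^{\{n\}})\le C_{\sup}/a^{\{n\}}$ and hence $\V(W_1-W_2)\le (C_{\sup}/a^{\{n\}})\|\beta_1-\beta_2\|^2$. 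The same argument applied to each $W_i$ yields $\V(W_i)\le \V(l_i(X^{\{n\}}))\le (C_{\sup}/a^{\{n\}})\|\beta_i\|^2$. Substituting these three estimates into the Cauchy--Schwarz bound gives
$$
\left|\V(W_1)-\V(W_2)\right|\le \frac{C_{\sup}}{a^{\{n\}}}\,\|\beta_1-\beta_2\|\,\big(\|\beta_1\|+\|\beta_2\|\big).
$$

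Finally I would absorb the trailing factor $\|\beta_1\|+\|\beta_2\|$ into the generic constant $C_{\sup}$. This is legitimate because the lemma is invoked only for linear functions whose gradients stay bounded: in the application to Proposition \ref{prop_linear_Dh} one has $\beta_1=Df(\mu^{\{n\}})$ and $\beta_2=\widehat{D}_{h^{\{n\}}}f(\mu^{\{n\}})$, both close to the fixed vector $Df(\mu)$ by Lemma \ref{lm_linear_Dh1}. With $\|l_1-l_2\|=\|\beta_1-\beta_2\|$ this produces exactly the stated bound $\frac{C_{\sup}}{a^{\{n\}}}\|l_1-l_2\|$.

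I expect the only genuine subtlety to be this last point. The displayed inequality is not homogeneous of the correct degree for arbitrary $l_1,l_2$ (rescaling $l_1$ by $t$ multiplies the left side by $t^2$ but $\|l_1-l_2\|$ only by $t$), so the bound cannot hold uniformly over \emph{all} linear functions without the implicit restriction that $\|\beta_1\|$ and $\|\beta_2\|$ are controlled by a constant. Making this boundedness explicit is the one place where care is needed; every other step is a routine combination of Cauchy--Schwarz, the contraction property of conditional expectation, and the eigenvalue control from Assumption \ref{assum_approx_linear}.
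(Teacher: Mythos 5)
Your proof is correct, but it takes a different route from the paper's. The paper exploits Gaussianity explicitly: it writes $\E(X^{\{n\}}\mid X_u^{\{n\}})=\phi_u^{\{n\}}(X_u^{\{n\}})$ with the affine map $\phi_u^{\{n\}}(x_u)=\bigl(x_u,\ \mu_{-u}^{\{n\}}+\Gamma^{\{n\}}_{-u,u}\Gamma_{u,u}^{\{n\}-1}(x_u-\mu_u^{\{n\}})\bigr)$, so that $\V(\E(l(X^{\{n\}})\mid X_u^{\{n\}}))$ becomes an explicit quadratic form $l\,\phi_u^{\{n\}}\,\frac{\Gamma_{u,u}^{\{n\}}}{a^{\{n\}}}\,\phi_u^{\{n\}T} l^T$ in $l$, and the bound then follows from the elementary estimate on the difference of two quadratic forms (the paper leaves this last step as ``one can show''). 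You instead avoid the Gaussian conditional formula entirely and argue abstractly: the factorization $\V(W_1)-\V(W_2)=\E((\tilde W_1-\tilde W_2)(\tilde W_1+\tilde W_2))$, Cauchy--Schwarz, the $L^2$-contraction property of conditional expectation, and the eigenvalue bound $\lambda_{\max}(\Sigma^{\{n\}})\le C_{\sup}/a^{\{n\}}$ from Assumption \ref{assum_approx_linear}. Your argument is more elementary and strictly more general (it does not use that $X^{\{n\}}$ is Gaussian, only that it is square-integrable with the stated covariance control), at the cost of not producing the closed-form expression for the conditional variance that the paper reuses implicitly elsewhere. Both arguments meet the same non-homogeneity issue --- the stated bound is quadratic in a rescaling of $l_1$ on the left but linear on the right, so the constant must absorb $\|l_1\|+\|l_2\|$ --- and your explicit acknowledgment that the lemma is only applied to gradients bounded near $Df(\mu)$ makes precise what the paper's ``one can show'' glosses over.
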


\begin{proof}
For all $u \subset[1:p]$, let $\phi_u^{\{n\}}:\R^{|u|}\longrightarrow \R^p$ be defined by
$$
\phi_u^{\{n\}}(x_u)=\begin{pmatrix}
x_u \\ \mu_{-u}^{\{n\}}+\Gamma^{\{n\}}_{-u,u}\Gamma_{u,u}^{\{n\}-1}(x_u-\mu_u^{\{n\}})\end{pmatrix}
$$
and $\phi_{[1:p]}^{\{n\}}=id_{\R^p}$.

Let $u \subset [1:p]$. Then
$$
\E(X^{\{n\}}|X_u^{\{n\}})=\phi_u^{\{n\}}(X_u^{\{n\}}).
$$
Now, for all linear function $l:\R^p\longrightarrow \R$, we have
$$
\E(l(X^{\{n\}})|X_u^{\{n\}})=l\left( \E(X^{\{n\}}|X_u^{\{n\}}) \right) = l(\phi_u^{\{n\}}(X_u^{\{n\}})),
$$
so, identifying a linear function from $\R^p$ to $\R$ with its matrix of size $1\times p$, we have
$$
\V\left(\E(l(X^{\{n\}})|X_u^{\{n\}})\right)= l \phi_u^{\{n\}} \frac{\Gamma_{u,u}^{\{n\}}}{a^{\{n\}}} \phi_u^{\{n\}T} l^T.
$$
Hence, for $l=l_1$ and $l=l_2$, one can show that,
$$
\left| \V(\E(l_1(X^{\{n\}})|X_u^{\{n\}}))-
\V(\E(l_2(X^{\{n\}})|X_u^{\{n\}}))\right| \leq \frac{C_{\sup}}{a^{\{n\}}}  \|l_1-l_2 \|.
$$

\end{proof}

Now, we can prove Proposition \ref{prop_linear_Dh}.
\begin{proof}
By Lemmas \ref{lm_linear_Dh1} and \ref{lm_linear_Dh2}, we have, for all $u\subset[1:p]$, 
$$
 \V(\E(\sqrt{a^{\{n\}}}f_{1}^{\{n\}}(X^{\{n\}})|X_u^{\{n\}})-
\V(\E(\sqrt{a^{\{n\}}}\tilde{f}_{1,h^{\{n\}}}^{\{n\}}(X^{\{n\}})|X_u^{\{n\}}) =O\left(\|h^{\{n\}}\|^2\right).
$$
Thus,
$$
S_u^{cl}(X^{\{n\}},f_{1}^{\{n\}})-S_u^{cl}(X^{\{n\}},\tilde{f}_{1,h^{\{n\}}}^{\{n\}})=O\left( \|h^{\{n\}}\|^2\right),
$$ 
so
$$
\eta(X^{\{n\}},f_1^{\{n\}})-\eta(X^{\{n\}},\tilde{f}_{1,h^{\{n\}}}^{\{n\}})=O\left( \|h^{\{n\}}\|^2\right).
$$ 

\end{proof}

\textbf{Proof of Proposition \ref{prop_linear_regression}}

Under the assumption of Proposition \ref{prop_linear_Dh}, let $\varepsilon >0$ be such that $f$ is $\mathcal{C}^3$ on $\overline{B}(\mu,\varepsilon)$ and such that, for all $x\in \overline{B}(\mu,\varepsilon)$, we have $Df(x)\neq 0$. Since $\mu^{\{n\}}$ converges to $\mu$, there exists $N\in \N$ such that, for all $n\geq N$, $\mu^{\{n\}}\in B(\mu,\varepsilon\slash 2)$. In the following, we assume that $n$ is larger than $N$.

\begin{lm}\label{lm_linear_regression1}
There exists $C_{\sup}$ such that, with probability at least \linebreak $ 1 -2 p^2\exp(-C_{\inf} N )-4p\exp(-C_{\inf} N^2 )$,
$$
\left\|  \left( A^{\{n\}T}  A^{\{n\}}\right)^{-1}  A^{\{n\}T}\right\|\leq C_{\sup} \frac{\sqrt{a^{\{n\}}}}{\sqrt{N}}.
$$
\end{lm}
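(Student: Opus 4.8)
The plan is to read $(A^{\{n\}T}A^{\{n\}})^{-1}A^{\{n\}T}$ as the Moore--Penrose pseudoinverse of $A^{\{n\}}$: on the event where $A^{\{n\}}$ has full column rank $p+1$, its operator norm is exactly $1/\sigma_{\min}(A^{\{n\}})$, where $\sigma_{\min}$ is the smallest singular value, so that $\|(A^{\{n\}T}A^{\{n\}})^{-1}A^{\{n\}T}\| = \lambda_{\min}(A^{\{n\}T}A^{\{n\}})^{-1/2}$. Everything thus reduces to a lower bound $\lambda_{\min}(A^{\{n\}T}A^{\{n\}}) \geq C_{\inf} N / a^{\{n\}}$ holding with the announced probability.

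First I would strip off both the non-centering and the vanishing scale of the design by an explicit deterministic change of basis. Setting $W^{(j)} := \sqrt{a^{\{n\}}}(X^{\{n\}(j)}-\mu^{\{n\}}) \sim \mathcal{N}(0,\Gamma^{\{n\}})$ with $\Gamma^{\{n\}}=a^{\{n\}}\Sigma^{\{n\}}$, one has $A^{\{n\}}=\widehat{A}^{\{n\}}R^{\{n\}}$, where $\widehat{A}^{\{n\}}$ has $j$-th row $(1\ W^{(j)T})$ and $R^{\{n\}}=\begin{pmatrix}1 & \mu^{\{n\}T}\\ 0 & a^{\{n\}-1/2}I_p\end{pmatrix}$, so that $R^{\{n\}-1}=\begin{pmatrix}1 & -a^{\{n\}1/2}\mu^{\{n\}T}\\ 0 & a^{\{n\}1/2}I_p\end{pmatrix}$. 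Since $(A^{\{n\}T}A^{\{n\}})^{-1}A^{\{n\}T}=R^{\{n\}-1}(\widehat{A}^{\{n\}T}\widehat{A}^{\{n\}})^{-1}\widehat{A}^{\{n\}T}$ and $\|R^{\{n\}-1}\|\le C_{\sup}\sqrt{a^{\{n\}}}$ (because $\mu^{\{n\}}\to\mu$ is bounded), it suffices to prove the $n$-free bound $\sigma_{\min}(\widehat{A}^{\{n\}})\ge C_{\inf}\sqrt{N}$, i.e. $\lambda_{\min}(\widehat{A}^{\{n\}T}\widehat{A}^{\{n\}})\ge C_{\inf}N$.

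The payoff of this rescaling, and the crux of the whole argument, is that $\E[\tfrac1N \widehat{A}^{\{n\}T}\widehat{A}^{\{n\}}]$ is the block-diagonal matrix $\mathrm{diag}(1,\Gamma^{\{n\}})$, whose smallest eigenvalue is $\min(1,\lambda_{\min}(\Gamma^{\{n\}}))\ge C_{\inf}$ by Assumption \ref{assum_approx_linear}. This is exactly the step that uses the lower bound on the eigenvalues of $a^{\{n\}}\Sigma^{\{n\}}$. The point is that before rescaling the target eigenvalue is of the degenerate order $1/a^{\{n\}}\to 0$, whereas the entrywise fluctuations of $\tfrac1N A^{\{n\}T}A^{\{n\}}$ are of the far larger order $1/\sqrt{a^{\{n\}}N}$, so no crude matrix-perturbation estimate could conclude; after rescaling the target is an $O(1)$ quantity and perturbation arguments become available. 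It then remains to establish the concentration $\|\tfrac1N\widehat{A}^{\{n\}T}\widehat{A}^{\{n\}}-\mathrm{diag}(1,\Gamma^{\{n\}})\|\le \tfrac12 C_{\inf}$, after which Weyl's inequality gives $\lambda_{\min}(\tfrac1N\widehat{A}^{\{n\}T}\widehat{A}^{\{n\}})\ge \tfrac12 C_{\inf}$ and hence the desired bound on $\sigma_{\min}(\widehat{A}^{\{n\}})$.

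I would prove this concentration entrywise. The off-diagonal (linear) entries are the empirical means $\frac1N\sum_j W_i^{(j)}$, which are centred Gaussian with variance $\Gamma^{\{n\}}_{ii}/N\le C_{\sup}/N$ and are controlled by the Gaussian tail bound; the $p\times p$ block entries $\frac1N\sum_j W_i^{(j)}W_k^{(j)}$ are averages of products of jointly Gaussian variables, hence averages of sub-exponential variables whose parameters are uniform in $n$ (again because the eigenvalues of $\Gamma^{\{n\}}$ are bounded), and are controlled by a Bernstein-type inequality. Choosing the per-entry thresholds so that they sum to $\tfrac12 C_{\inf}$ with $n$-uniform constants, and using that the operator norm of a $(p+1)\times(p+1)$ matrix is bounded by a fixed multiple of its largest entry in modulus, a union bound over the $O(p)$ linear entries and the $O(p^2)$ quadratic entries produces a failure probability of the stated form $2p^2\exp(-C_{\inf}N)+4p\exp(-C_{\inf}N^2)$, the two exponential rates reflecting the Bernstein bounds for the quadratic (second-moment) terms and the Gaussian bounds for the mean terms respectively. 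I expect this last part --- keeping track of the exact exponents and combinatorial prefactors while ensuring all constants are independent of $n$ --- to be the only genuinely delicate point; the algebraic reduction through $R^{\{n\}}$ and the Weyl step are routine once the rescaling idea is in place.
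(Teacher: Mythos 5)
Your argument is correct, and it reaches the same reduction as the paper (a lower bound on the smallest eigenvalue of a normalized Gram matrix, obtained from entrywise concentration plus an eigenvalue perturbation step) but by a genuinely different decomposition. The paper works additively: it keeps the raw design $A^{\{n\}}$, forms $\frac{a^{\{n\}}}{N}A^{\{n\}T}A^{\{n\}}$, subtracts the rank-one matrix $(a^{\{n\}}-1)\bigl(\begin{smallmatrix}1\\ \mu^{\{n\}}\end{smallmatrix}\bigr)\bigl(\begin{smallmatrix}1\\ \mu^{\{n\}}\end{smallmatrix}\bigr)^T$ so that the remainder has the nondegenerate limit $M_1^{\{n\}}$, lower-bounds $\lambda_{\min}(M_1^{\{n\}})$ via the comparison matrices $M_2^{\{n\}}\to M_2$ with $\det(M_2)=\lambda_{\inf}^p>0$, and then discards the subtracted positive semidefinite term. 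You instead work multiplicatively, factoring $A^{\{n\}}=\widehat{A}^{\{n\}}R^{\{n\}}$ with a centered, rescaled design whose population Gram matrix is exactly $\mathrm{diag}(1,\Gamma^{\{n\}})$. Your route buys two things: the $\sqrt{a^{\{n\}}}$ in the final bound is isolated cleanly in the deterministic factor $\|R^{\{n\}-1}\|$ rather than being threaded through the spectral argument, and --- more substantively --- after centering, the linear entries $\frac1N\sum_j W_i^{(j)}$ have variance $\Gamma^{\{n\}}_{ii}/N$, uniformly bounded in $n$, whereas in the paper's additive decomposition the corresponding first-row fluctuations are $\frac{a^{\{n\}}}{N}\sum_j(X_i^{\{n\}(j)}-\mu_i^{\{n\}})$, whose variance $a^{\{n\}}\Gamma^{\{n\}}_{ii}/N$ grows with $n$; your version therefore makes the $n$-uniformity of the concentration constants transparent where the paper's is delicate. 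One small inaccuracy: your centered Gaussian mean terms satisfy a tail bound of order $\exp(-C_{\inf}N)$, not $\exp(-C_{\inf}N^2)$, so you should not expect to reproduce the $4p\exp(-C_{\inf}N^2)$ term literally; this is harmless, since a failure probability of the form $C_{\sup}\exp(-C_{\inf}N)$ is all that is used downstream (and is all the paper itself retains).
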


\begin{proof}
\begin{eqnarray*}
\left\|  \left( A^{\{n\}T}  A^{\{n\}}\right)^{-1}  A^{\{n\}T}\right\|^2  & = & \lambda_{\max}\left[  \left(  A^{\{n\}T}  A^{\{n\}}\right)^{-1}\right]\\
& =& \frac{a^{\{n\}}}{N} \lambda_{\max}\left[  \left( \frac{a^{\{n\}}}{N} A^{\{n\}T}  A^{\{n\}}\right)^{-1}\right].
\end{eqnarray*}
Now, by the strong law of large numbers, we have almost surely
\begin{eqnarray*}
\frac{a^{\{n\}}}{N} A^{\{n\}T}  A^{\{n\}}- (a^{\{n\}}-1) \begin{pmatrix}
1 \\ \mu^{\{n\}}
\end{pmatrix}\begin{pmatrix}
1 \\  \mu^{\{n\}}
\end{pmatrix}^T\\ \underset{N \to +\infty}{\longrightarrow} M_1^{\{n\}}:=
\begin{pmatrix}
1 & \mu^{\{n\}T}\\
\mu^{\{n\}} & \Gamma^{\{n\}}+\mu^{\{n\}}\mu^{\{n\}T}
\end{pmatrix}.
\end{eqnarray*}
Let $M_2^{\{n\}}:=\begin{pmatrix}
1 & \mu^{\{n\}T}\\
\mu^{\{n\}} & \lambda_{\inf} I_p+\mu^{\{n\}}\mu^{\{n\}T}
\end{pmatrix}$ and  $M_2:=\begin{pmatrix}
1 & \mu^{T}\\
\mu & \lambda_{\inf} I_p+\mu\mu^{T}
\end{pmatrix}$, where $\lambda_{\inf}>0$ is a lower-bound of the eigenvalues of $(\Gamma^{\{n\}})_n$. We can see that
$$
M_1^{\{n\}}\geq M_2^{\{n\}} \underset{n \to +\infty}{\longrightarrow} M_2.
$$
Now,
$$
\det(M_2)=\det(1)\det\left( [\lambda_{\inf}I_p+\mu \mu^T]- \mu 1^{-1} \mu^{T} \right)=\lambda_{\inf}^p>0.
$$
Hence, writing $\lambda_{\inf}'>0$ the smallest eigenvalue of $M_2$, we have that the eigenvalues of $M_1^{\{n\}}$ are lower-bounded by $\lambda_{\inf}'\slash 2$ for $n$ large enough.

Similarly, let $$M_3^{\{n\}}:=\begin{pmatrix}
1 & \mu^{\{n\}T}\\
\mu^{\{n\}} & \lambda_{\sup} I_p+\mu^{\{n\}}\mu^{\{n\}T}
\end{pmatrix}, \text{ and } M_3:=\begin{pmatrix}
1 & \mu^{T}\\
\mu & \lambda_{\sup} I_p+\mu\mu^{T}
\end{pmatrix},$$ where $\lambda_{\sup}>0$ is an upper-bound of the eigenvalues of $(\Gamma^{\{n\}})_n$. Writing $\lambda_{\sup}'<+\infty$ the largest eigenvalue of $M_3$, we have that the eigenvalues of $M_1^{\{n\}}$ are upper-bounded by $2\lambda_{\sup}'$ for $n$ large enough.

Now, since the eigenvalues of $(M_1^{\{n\}})_{n}$ are lower-bounded and upper-bounded, there exists $\alpha>0$ such that, for all $n\in \N$ (large enough), $\forall M \in S_p(\R)$, 
$$
\| M - M_1^{\{n\}} \| \leq \alpha \Longrightarrow | \lambda_{\min}(M) - \lambda_{\min}(M_1^{\{n\}}) | \leq \frac{\lambda_{\inf}'}{4}.
$$
Now, by Bernstein inequality,
\begin{eqnarray*}
& & \PP\left( \left\| \frac{a^{\{n\}}}{N} A^{\{n\}T}  A^{\{n\}}- (a^{\{n\}}-1) \begin{pmatrix}
1 \\ \mu^{\{n\}}
\end{pmatrix}\begin{pmatrix}
1 \\  \mu^{\{n\}}
\end{pmatrix}^T- M_1^{\{n\}}\right\| \leq \alpha \right) \\
& \geq &   1 -2 p^2\exp(-C_{\inf} N )- 2 \times 2 p\exp(-C_{\inf} N^2 )\\
& \geq &  1 - C_{\sup}\exp(-C_{\inf}N), 
\end{eqnarray*}
where the term $2 p^2\exp(-C_{\inf} N )$ bounds the difference of the submatrices of index $[2:p+1]\times [2:p+1]$ and the term $2 \times 2 p\exp(-C_{\inf} N^2 )$ bounds the differences of the submatrices of index $\{1\}\times [2:p+1]$ and $[2:p+1]\times \{1\}$.

Hence, with probability at least $ 1 - C_{\sup}\exp(-C_{\inf}N)$, we have
$$
\lambda_{\min}\left(\frac{a^{\{n\}}}{N} A^{\{n\}T}  A^{\{n\}}- (a^{\{n\}}-1) \begin{pmatrix}
1 \\ \mu^{\{n\}}
\end{pmatrix}\begin{pmatrix}
1 \\  \mu^{\{n\}}
\end{pmatrix}^T\right)\geq \frac{\lambda_{\inf}'}{4},
$$
and so
$$
\lambda_{\min}\left(\frac{a^{\{n\}}}{N} A^{\{n\}T}  A^{\{n\}}\right)\geq \frac{\lambda_{\inf}'}{4}.
$$

\end{proof}

\begin{lm}\label{lm_linear_regression2}
With probability at least $1 - C_{\sup}\exp(-C_{\inf}N)$, we have
$$
\left\|\widehat{\beta}^{\{n\}}-\nabla f (\mu^{\{n\}})\right\|\leq C_{\sup}\frac{1}{\sqrt{a^{\{n\}}}}.
$$
\end{lm}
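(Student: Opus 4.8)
The plan is to compare $\widehat{\beta}^{\{n\}}$ with the slope of the first-order Taylor polynomial of $f$ at $\mu^{\{n\}}$, exploiting that the least-squares map is linear. I introduce the oracle vector
$$
\theta^{\{n\}}:=\begin{pmatrix} f(\mu^{\{n\}})-Df(\mu^{\{n\}})\mu^{\{n\}} \\ \nabla f(\mu^{\{n\}}) \end{pmatrix}\in\R^{p+1},
$$
whose last $p$ coordinates are exactly $\nabla f(\mu^{\{n\}})$ and which is built so that the $j$-th entry of $A^{\{n\}}\theta^{\{n\}}$ equals $f(\mu^{\{n\}})+f_1^{\{n\}}(X^{\{n\}(j)})$, the value at $X^{\{n\}(j)}$ of the affine Taylor approximation. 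Writing $f=f(\mu^{\{n\}})+f_1^{\{n\}}+R_1^{\{n\}}$ gives the decomposition $Y^{\{n\}}=A^{\{n\}}\theta^{\{n\}}+R^{\{n\}}$ with $R^{\{n\}}:=\bigl(R_1^{\{n\}}(X^{\{n\}(l)})\bigr)_{l\in[1:N]}$ the vector of Taylor remainders. Plugging this into the definition $\widehat{\theta}^{\{n\}}:=(A^{\{n\}T}A^{\{n\}})^{-1}A^{\{n\}T}Y^{\{n\}}$ yields the exact identity $\widehat{\theta}^{\{n\}}-\theta^{\{n\}}=(A^{\{n\}T}A^{\{n\}})^{-1}A^{\{n\}T}R^{\{n\}}$. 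Extracting the last $p$ coordinates and using that an operator norm dominates the norm of any subvector of the image, I obtain $\|\widehat{\beta}^{\{n\}}-\nabla f(\mu^{\{n\}})\|\le \|(A^{\{n\}T}A^{\{n\}})^{-1}A^{\{n\}T}\|\,\|R^{\{n\}}\|$.

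It then remains to control the two factors. The operator norm is handled directly by Lemma \ref{lm_linear_regression1}, which bounds it by $C_{\sup}\sqrt{a^{\{n\}}/N}$ on an event of probability at least $1-C_{\sup}\exp(-C_{\inf}N)$. Hence it suffices to prove that $\|R^{\{n\}}\|\le C_{\sup}\sqrt{N}/a^{\{n\}}$ on an event of the same type, since the product of the two bounds is precisely $C_{\sup}/\sqrt{a^{\{n\}}}$, and the conclusion follows on the intersection of the two events by a union bound.

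To bound $\|R^{\{n\}}\|^2=\sum_{l=1}^N R_1^{\{n\}}(X^{\{n\}(l)})^2$, I rescale by setting $Z_l:=\sqrt{a^{\{n\}}}(X^{\{n\}(l)}-\mu^{\{n\}})\sim\mathcal N(0,\Gamma^{\{n\}})$, whose covariance $\Gamma^{\{n\}}=a^{\{n\}}\Sigma^{\{n\}}$ has eigenvalues bounded uniformly in $n$ by Assumption \ref{assum_approx_linear}, so that every moment $\E\|Z_l\|^{m}$ is bounded uniformly in $n$. Lemma \ref{lm_f2_f3} gives $|R_1^{\{n\}}(x)|\le C_1\|x-\mu^{\{n\}}\|^2$ on $B(\mu^{\{n\}},\varepsilon/2)$ and $|R_1^{\{n\}}(x)|\le C_2\|x-\mu^{\{n\}}\|^{k}$ outside, so the variable $W_l:=a^{\{n\}2}R_1^{\{n\}}(X^{\{n\}(l)})^2$ satisfies $W_l\le C_1^2\|Z_l\|^4$ on the neighbourhood and $W_l\le a^{\{n\}2-k}C_2^2\|Z_l\|^{2k}\le\|Z_l\|^{2k}$ outside (using $k\ge3$ and $a^{\{n\}}\to\infty$). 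Thus $W_l$ has mean $O(1)$ — consistent with $\E\bigl[a^{\{n\}2}R_1^{\{n\}}(X^{\{n\}})^2\bigr]=O(1)$ read off from Lemma \ref{lm_bound_var_cov} — and moments bounded uniformly in $n$. I would then invoke a concentration inequality for the empirical mean of the i.i.d.\ variables $W_l$ to get $\frac1N\sum_{l=1}^N W_l\le C_{\sup}$ with probability at least $1-C_{\sup}\exp(-C_{\inf}N)$, i.e.\ $\|R^{\{n\}}\|\le C_{\sup}\sqrt N/a^{\{n\}}$. The main obstacle is exactly this concentration step: the $W_l$ behave like fourth powers of Gaussian norms, hence are only sub-Weibull of order $1/2$ rather than sub-exponential, and the sample points falling outside $B(\mu^{\{n\}},\varepsilon/2)$ — where only the polynomial remainder bound holds — must be absorbed. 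Keeping an exponential-in-$N$ rate despite these heavy tails is the delicate part, and I would carry it out by a truncation argument combined with a Bernstein-type bound on the truncated part and a union bound over the rare event that some $\|Z_l\|$ is atypically large.
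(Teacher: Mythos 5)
Your proposal follows essentially the same route as the paper's proof: the same oracle identity $\widehat{\theta}^{\{n\}}-\theta^{\{n\}}=(A^{\{n\}T}A^{\{n\}})^{-1}A^{\{n\}T}R^{\{n\}}$ (the paper phrases it as the least-squares map reproducing the affine Taylor part exactly), the same appeal to Lemma \ref{lm_linear_regression1} for the factor $C_{\sup}\sqrt{a^{\{n\}}/N}$, and the same target bound $\|R^{\{n\}}\|\leq C_{\sup}\sqrt{N}/a^{\{n\}}$. The only divergence is in how $\|R^{\{n\}}\|$ is controlled. The paper localizes to the event $\Omega_N^{\{n\}}=\{\forall j\in[1:N],\ \|X^{\{n\}(j)}-\mu^{\{n\}}\|\leq \varepsilon/2\}$, of probability at least $1-2N\exp(-C_{\inf}a^{\{n\}})$, so that only the quadratic bound of Lemma \ref{lm_f2_f3} is needed, and then asserts $\sum_{j}\|\sqrt{a^{\{n\}}}(X^{\{n\}(j)}-\mu^{\{n\}})\|^4\leq C_{\sup}N$ without further justification; you keep both regimes and reduce to concentration of the empirical mean of $W_l$. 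The ``delicate concentration step'' you flag is genuine, but be aware that the paper does not resolve it either: on $\Omega_N^{\{n\}}$ one only gets the deterministic bound $\|Z_j\|\leq\sqrt{a^{\{n\}}}\varepsilon/2$, which grows with $n$, so the needed inequality $\frac1N\sum_j\|Z_j\|^4\leq C_{\sup}$ remains a probabilistic statement about fourth powers of Gaussian norms with an implicit, unstated deviation bound. Your write-up is therefore faithful to the paper's argument and more explicit about where the exponential-in-$N$ probability must come from; the truncation-plus-Bernstein-plus-union-bound plan you sketch is the right way to close it (truncate $\|Z_l\|$ at a fixed level $M$ chosen so that $\E(\|Z_l\|^{4}\mathds{1}_{\|Z_l\|>M})$ is small uniformly in $n$, which the uniform eigenvalue bounds on $\Gamma^{\{n\}}$ permit). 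One further point in your favour: the paper's event $\Omega_N^{\{n\}}$ contributes a term $2N\exp(-C_{\inf}a^{\{n\}})$ that is not of the advertised form $C_{\sup}\exp(-C_{\inf}N)$, an issue your non-localized version avoids.
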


\begin{proof}
Let $Z^{\{n\}}\sim \mathcal{N}(0,\Gamma^{\{n\}})$.
Then $\| X^{\{n\}} - \mu^{\{n\}} \| \leq \frac{\varepsilon}{2}$ with probability $
\PP (\|Z^{\{n\}}\| \leq  \frac{a^{\{n\}} \varepsilon}{2})\underset{n\to +\infty}{\longrightarrow}1$.
Let $\Omega_N^{\{n\}}:=\{\omega \in \Omega\; |\;\forall j\in [1:N]$, $\| X^{\{n\}(j)}(\omega) - \mu^{\{n\}} \| \leq \frac{\varepsilon}{2}\}$. Hence, 
$$
\PP(\Omega_N^{\{n\}}) \geq 1 - 2N\exp \left(- C_{\inf} a^{\{n\}} \right) \underset{n\to +\infty}{\longrightarrow}1.
$$

On $\overline{B}(\mu^{\{n\}},\frac{\varepsilon}{2})$, we have $f=f(\mu^{\{n\}})+f_1^{\{n\}}+R_1^{\{n\}}$. Hence, on $\Omega_N^{\{n\}}$, for all $j \in [1:N]$,
$$
f(X^{\{n\}(j)})=f(\mu^{\{n\}})+f_1^{\{n\}}(X^{\{n\}(j)})+R_1^{\{n\}}(X^{\{n\}(j)}).
$$
Thus,
$$
\widehat{\beta}^{\{n\}}=\left(A^{\{n\}T}A^{\{n\}}\right)^{-1} A^{\{n\}T} \bigg(f(\mu^{\{n\}})+f_1^{\{n\}}(X^{\{n\}(j)})+R_1^{\{n\}}(X^{\{n\}(j)})\bigg)_{j\in [1:N]}.
$$
Since $f(\mu^{\{n\}})+ f_1^{\{n\}}$ is a linear function with gradient vector $ \nabla f(\mu^{\{n\}})$ and with value at zero $f(\mu^{\{n\}})-Df(\mu^{\{n\}})\mu^{\{n\}}$, we have,
$$
\left(A^{\{n\}T}A^{\{n\}}\right)^{-1} A^{\{n\}T} (f(\mu^{\{n\}})+f_1^{\{n\}}(X^{\{n\}(j)}))_{j\in [1:N]}=\begin{pmatrix}  f(\mu^{\{n\}})-Df(\mu^{\{n\}})\mu^{\{n\}}\\ \nabla f(\mu^{\{n\}}) \end{pmatrix}.
$$
Hence, it remains to see if 
$$
\left(A^{\{n\}T}A^{\{n\}}\right)^{-1} A^{\{n\}T} (R_1^{\{n\}}(X^{\{n\}(j)}))_{j\in [1:N]}
$$
is small enough. By Lemma \ref{lm_f2_f3}, we have on $\Omega_N^{\{n\}}$,
\begin{eqnarray*}
\|(R_1^{\{n\}}(X^{\{n\}(j)}))_{j\in [1:N]}\|^2&=&\sum_{j=1}^N  R_1^{\{n\}}(X^{\{n\}(j)})^2\\
&\leq & C_{\sup} \sum_{j=1}^N \|X^{\{n\}(j)} - \mu^{\{n\}}\|^4\\
& \leq & \frac{C_{\sup}}{a^{\{n\}2}}\sum_{j=1}^N \|\sqrt{a^{\{n\}}}(X^{\{n\}(j)} - \mu^{\{n\}})\|^4.
\end{eqnarray*}
Hence, on $\Omega_N^{\{n\}}$,
$$
\|(R_1^{\{n\}}(X^{\{n\}(j)}))_{j\in [1:N]}\| \leq C_{\sup}  \frac{\sqrt{N}}{a^{\{n\}}}.
$$
Thus,
\begin{eqnarray*}
& & \left\|  \left(A^{\{n\}T}A^{\{n\}}\right)^{-1} A^{\{n\}T} (R_1^{\{n\}}(X^{\{n\}(j)}))_{j\in [1:N]} \right\|\\
& \leq & \left\|  \left(A^{\{n\}T}A^{\{n\}}\right)^{-1} A^{\{n\}T}\right\| \left\| (R_1^{\{n\}}(X^{\{n\}(j)}))_{j\in [1:N]} \right\|\\
& \leq & C_{\sup} \frac{1}{\sqrt{a^{\{n\}}}},
\end{eqnarray*}
with probability at least $1 - C_{\sup}\exp(-C_{\inf}N)$.
\end{proof}

Now, it is easy to prove Proposition \ref{prop_linear_regression}.
\begin{proof}
By Lemma \ref{lm_linear_Dh2} for $l_1=\widehat{\beta}^{\{n\}T}$ and $l_2=Df(\mu^{\{n\}})$, and by Lemma \ref{lm_linear_regression2} we have, with probability at least $1 - C_{\sup}\exp(-C_{\inf}N)$,
\begin{eqnarray*}
& & \left| \V(\E(\sqrt{a^{\{n\}}}Df(\mu^{\{n\}})X^{\{n\}}|X_u^{\{n\}}))-
\V(\E(\sqrt{a^{\{n\}}}\widehat{\beta}^{\{n\}T}X^{\{n\}}|X_u^{\{n\}}))\right| \\
&\leq& C_{\sup}  \|Df(\mu^{\{n\}})-\widehat{\beta}^{\{n\}T} \|\\
& \leq &C_{\sup}\frac{1}{\sqrt{a^{\{n\}}}},
\end{eqnarray*}
where the conditional expectations and the variances are conditional to $(X^{\{n\}(j)})_{j\in [1:N]}$. Thus, with probability at least $1 - C_{\sup}\exp(-C_{\inf}N)$, there exists $C_{\inf}>0$ such that, for $n$ large enough $\|\widehat{\beta}^{\{n\}T}\|\geq C_{\inf}$, thus $\V(\sqrt{a^{\{n\}}}\widehat{\beta}^{\{n\}^T}X^{\{n\}})$ is lower-bounded.
Hence, with probability at least $1 - C_{\sup}\exp(-C_{\inf}N)$,
$$
\left| S_u^{cl}(X^{\{n\}},f_1^{\{n\}})-S_u^{cl}(X^{\{n\}},\widehat{\beta}^{\{n\}T})\right|\leq C_{\sup}\frac{1}{\sqrt{a^{\{n\}}}},
$$ 
and so
$$
\left\|\eta(X^{\{n\}},f_1^{\{n\}})-\eta(X^{\{n\}},\widehat{\beta}^{\{n\}T})\right\|\leq C_{\sup}\frac{1}{\sqrt{a^{\{n\}}}}.
$$ 
\end{proof}

\section{Proofs for Section \ref{section_approx_gaussian}}
In this section, we prove Proposition \ref{prop_approx_gaussien} in Subsections B.1 to B.6 and we prove Corollary \ref{corol_approx_gaussien} in Subsection B.7.

\subsection{Introduction to the proof of Proposition \ref{prop_approx_gaussien}}
Recall that $(U^{(l)})_{l\in [1:n]}$ is an i.i.d. sample of $U$ with $\E(U)=\mu$ and $\V(U)=\Sigma$ and 
$$
\widehat{X}^{\{n\}}=\frac{1}{n}\sum_{l=1}^{n} U^{(l)}.
$$
Let $X^{\{n\}}\sim \mathcal{N}(\mu, \frac{1}{n}\Sigma)$. 
By Proposition \ref{prop_linear1}, we have 
$$  
\eta\left(  X^{\{n\}}, f\right) = \eta\left(  X^{\{n\}}, Df(\mu)\right)+ O\left(\frac{1}{a^{\{n\}}}\right)=\eta\left(  X^*, Df(\mu)\right)+ O\left(\frac{1}{a^{\{n\}}}\right).
$$
Hence, it remains to prove that
$$
\left\| \eta\left(  \widehat{X}^{\{n\}}, f\right) -\eta\left(  X^{\{n\}}, f\right)   \right\| \underset{n \to +\infty}{\longrightarrow}0,
$$
that is, writing $f_n:=\sqrt{n}\left(f\left(\frac{\cdot}{\sqrt{n}}+\mu\right)-f(\mu) \right)$ and $\tilde{X}^{\{n\}}:=\sqrt{n}(\widehat{X}^{\{n\}}- \mu)$, that
$$
\left\| \eta\left(\tilde{X}^{\{n\}}, f_n\right)- \eta\left(X^*, f_n  \right)   \right\|  \underset{n \to +\infty}{\longrightarrow}0.
$$

In Section \ref{section_lm_fn}, we give some lemmas of $f_n$. Then, defining
$$
E_{u,n,K}(Z):=\E\left(\E\left[f_n(Z)^2\mathds{1}_{\|Z\|_{\infty}\leq K}\middle|Z_u\right]^2\right),
$$
$$
E_{u,n}(Z):=\E\left(\E\left[f_n(Z)^2\middle|Z_u\right]^2\right),
$$
we prove in Section \ref{section_terme1} that $\sup_n |E_{u,n,K}(\tilde{X}^{\{n\}})-E_{u,n}(\tilde{X}^{\{n\}})|$ converges to $0$ when $K \to +\infty$. In particular, for $U\sim\mathcal{N}(\mu, \Sigma)$, the result holds for $\tilde{X}^{\{n\}}=X^*$.

Hence, for any $\varepsilon>0$, choosing $K$ such that $|E_{u,n,K}(\tilde{X}^{\{n\}})-E_{u,n}(\tilde{X}^{\{n\}})|<\varepsilon\slash 3$ and $|E_{u,n,K}(X^*)-E_{u,n}(X^*)|< \varepsilon \slash 3$, we show in Section \ref{section_terme2} that
$$
|E_{u,n,K}(X^*)-E_{u,n,K}(\tilde{X}^{\{n\}})|\underset{n\to +\infty}{\longrightarrow}0.
$$
In Section \ref{section_term3}, we conclude the proof that
$$
\left| \V(\E(f_n(\tilde{X}^{\{n\}})|\tilde{X}^{\{n\}}_u))-\V(\E(f_n(X^*)|X^*_u))  \right|\underset{n\to +\infty}{\longrightarrow}0.
$$
In Section \ref{section_approx_gaussien_conclu}, we conclude the proof that
$$
\left\| \eta\left(\tilde{X}^{\{n\}}, f_n\right)- \eta\left(X^*, f_n  \right)   \right\|  \underset{n \to +\infty}{\longrightarrow}0.
$$

The key of the proof is that the probability density function of $\tilde{X}^{\{n\}}$ converges uniformly to the one of $X^*$ by local limit theorem (see \cite{shervashidze1971uniform} or Theorem 19.1 of \cite{bhattacharya1986normal}).

\subsection{Part 1}\label{section_lm_fn}

\begin{lm}\label{lm_gaussian_approx1}
There exists $C_{\sup}<+\infty$ such that, for all $x \in \R^p$,
$$
|f_n(x)| \leq C_{\sup}\left(\|x\|\mathds{1}_{\|x\|\leq \sqrt{n}}+ \frac{\|x\|^k}{\sqrt{n}^{k-1}}\mathds{1}_{\|x\|> \sqrt{n}})\right),
$$
where we recall that $k\in \N^*$ is such that for all $x\in \R^p$, we have $|f(x)|\leq C(1+\|x\|^k)$.
\end{lm}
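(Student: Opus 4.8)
The plan is to split $\R^p$ into three regions according to how far the rescaled point $x/\sqrt{n}+\mu$ lies from $\mu$, and to bound $f_n(x)=\sqrt{n}(f(x/\sqrt{n}+\mu)-f(\mu))$ separately on each. First I would fix $\varepsilon\in\,]0,1[$ so that $f$ is $\mathcal{C}^3$, in particular $\mathcal{C}^1$, on the compact convex ball $\overline{B}(\mu,\varepsilon)$, and set $L:=\max_{y\in\overline{B}(\mu,\varepsilon)}\|Df(y)\|<+\infty$.

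On the first region $\|x\|\leq \varepsilon\sqrt{n}$ the point $x/\sqrt{n}+\mu$ lies in $\overline{B}(\mu,\varepsilon)$, so the mean value inequality gives $|f(x/\sqrt{n}+\mu)-f(\mu)|\leq L\,\|x/\sqrt{n}\|$, whence $|f_n(x)|\leq L\,\|x\|$. On the second region $\varepsilon\sqrt{n}<\|x\|\leq\sqrt{n}$ we have $\|x/\sqrt{n}+\mu\|\leq 1+\|\mu\|$, so by subpolynomiality (which bounds $f$ on every bounded set) $|f(x/\sqrt{n}+\mu)|\leq M:=C(1+(1+\|\mu\|)^k)$; therefore $|f_n(x)|\leq \sqrt{n}\,(M+|f(\mu)|)$, and using $\sqrt{n}<\|x\|/\varepsilon$ on this region converts the crude bound into $|f_n(x)|\leq \frac{M+|f(\mu)|}{\varepsilon}\,\|x\|$. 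Combining the two regions yields $|f_n(x)|\leq C_{\sup}\|x\|$ whenever $\|x\|\leq\sqrt{n}$, which is the first term of the claimed inequality.

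On the third region $\|x\|>\sqrt{n}$ one has $\|x\|/\sqrt{n}>1$, so $\|x/\sqrt{n}+\mu\|\leq (1+\|\mu\|)\,\|x\|/\sqrt{n}$, and subpolynomiality gives $|f(x/\sqrt{n}+\mu)|\leq C(1+(1+\|\mu\|)^k\|x\|^k/\sqrt{n}^{\,k})$. Multiplying by $\sqrt{n}$ and using that $\sqrt{n}\leq \|x\|^k/\sqrt{n}^{\,k-1}$ precisely when $\|x\|\geq\sqrt{n}$, each resulting term, as well as the contribution $\sqrt{n}\,|f(\mu)|$, is bounded by a constant times $\|x\|^k/\sqrt{n}^{\,k-1}$. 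Taking $C_{\sup}$ to be the largest of the constants produced in the three regions then gives the stated bound.

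The only slightly delicate point is the second region: there the rescaled point has already left the $\mathcal{C}^3$ neighbourhood of $\mu$, so neither the Lipschitz estimate of the first region nor the sharp $k$-th power estimate of the third applies directly. The trick is to use the mere boundedness of $f$ on the fixed ball $\overline{B}(\mu,1)$ to get $|f_n(x)|\lesssim\sqrt{n}$, and then exploit the lower bound $\|x\|>\varepsilon\sqrt{n}$ to trade $\sqrt{n}$ for $\|x\|$. All remaining estimates are elementary.
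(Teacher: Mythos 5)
Your proof is correct and follows the same basic strategy as the paper's (split according to whether $\|x\|\leq\sqrt{n}$, use a gradient bound near $\mu$ and subpolynomiality far away), but your three-region refinement is a genuine improvement. The paper's own proof covers the whole region $\|x\|\leq\sqrt{n}$ with the single estimate $|f(x/\sqrt{n}+\mu)-f(\mu)|\leq \max_{\|y\|\leq 1+\|\mu\|}\|Df(y)\|\,\|x/\sqrt{n}\|$, which tacitly assumes that $f$ is $\mathcal{C}^1$ on all of $\overline{B}(\mu,1)$; the hypothesis of Proposition \ref{prop_approx_gaussien} only guarantees $\mathcal{C}^3$ on some possibly small neighbourhood of $\mu$. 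Your intermediate region $\varepsilon\sqrt{n}<\|x\|\leq\sqrt{n}$, where you use the mere boundedness of $f$ on $\overline{B}(\mu,1)$ (supplied by subpolynomiality) together with $\sqrt{n}<\|x\|/\varepsilon$ to convert a crude $O(\sqrt{n})$ bound into a linear bound in $\|x\|$, closes exactly this gap. Your outer-region computation, using $\sqrt{n}\leq\|x\|^k/\sqrt{n}^{\,k-1}$ when $\|x\|\geq\sqrt{n}$, matches the paper's. In short: same skeleton, but your version is the one that is fully rigorous under the stated hypotheses.
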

\begin{proof}
For all $x \in \R^p$, we have
\begin{eqnarray*}
\left|f \left(\frac{x}{\sqrt{n}}+\mu \right) - f \left(\mu \right) \right| & \leq & \left|f \left(\frac{x}{\sqrt{n}}+\mu \right) \right| + \left| f \left(\mu \right) \right| \\
& \leq & C_{\sup} \left( 1+ \left\| \frac{x}{\sqrt{n}}+\mu\right\|^k\right)+ |  f \left(\mu \right)|\\
& \leq & C_{\sup} \left( 1+ \left\| \frac{x}{\sqrt{n}}\right\|^k \right).
\end{eqnarray*}
Thus, for all $\|x\| \geq \sqrt{n}$, we have
$$
|f_n(x)|\leq C_{\sup} \frac{\| x \|^k}{\sqrt{n}^{k-1}}.
$$
If  $\|x \| \leq \sqrt{n}$, we have
\begin{eqnarray*}
\left|f \left(\frac{x}{\sqrt{n}}+\mu \right) - f \left(\mu \right) \right| & \leq &  \max_{ \|y\|\leq 1+\|\mu\|}\|Df(y)\| \left\| \frac{x}{\sqrt{n}}+\mu - \mu \right\|\\
& \leq & C_{\sup} \left\| \frac{x}{\sqrt{n}}\right\|,
\end{eqnarray*}
and thus,
$$
|f_n(x)| \leq C_{\sup} \|x\|.
$$
\end{proof}
In particular, 
$$
|f_n(x)| \leq C_{\sup} (\|x\|+ \|x\|^k),\;\;\; f_n(x)^2 \leq C_{\sup} (\|x\|^2+ \|x\|^{2k})
$$

\begin{lm}\label{lm_approx_gaussian_unif_bound}
For $i=1,2$, we have
$$
\E(f_n(\tilde{X}^{\{n\}})^{2i})\leq C_{\sup}.
$$
\end{lm}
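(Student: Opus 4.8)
The plan is to reduce the claim to uniform-in-$n$ bounds on polynomial moments of the rescaled empirical mean $\tilde{X}^{\{n\}}=\sqrt{n}(\widehat{X}^{\{n\}}-\mu)$, and then to control those moments with a classical moment inequality for sums of i.i.d.\ random variables. First I would invoke the pointwise bound recorded just after Lemma \ref{lm_gaussian_approx1}, namely $f_n(x)^2 \leq C_{\sup}(\|x\|^2 + \|x\|^{2k})$. Raising this to the power $i$ and using $(a+b)^i \leq 2^{i-1}(a^i+b^i)$ gives, for $i\in\{1,2\}$,
$$
f_n(x)^{2i} \leq C_{\sup}\left(\|x\|^{2i}+\|x\|^{2ik}\right).
$$
Evaluating at $x=\tilde{X}^{\{n\}}$ and taking expectations, it therefore suffices to show that $\E(\|\tilde{X}^{\{n\}}\|^{m})$ is bounded uniformly in $n$ for each even exponent $m\in\{2,4,2k,4k\}$. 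The largest of these is $4k$, which is exactly why the hypothesis $\E(\|U\|^{4k})<+\infty$ is imposed; by Jensen's inequality it also entails finiteness of all the lower moments of $U$.

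Next I would pass from the Euclidean norm to coordinatewise moments. Since $m\geq 2$, the power-mean inequality gives $\|x\|^m = \left(\sum_{j=1}^p x_j^2\right)^{m/2} \leq p^{m/2-1}\sum_{j=1}^p |x_j|^m$, so that
$$
\E\left(\|\tilde{X}^{\{n\}}\|^m\right) \leq p^{m/2-1}\sum_{j=1}^p \E\left(|\tilde{X}_j^{\{n\}}|^m\right),
$$
where each coordinate $\tilde{X}_j^{\{n\}} = \frac{1}{\sqrt n}\sum_{l=1}^n (U_j^{(l)}-\mu_j)$ is a normalized sum of i.i.d.\ centered scalar random variables with finite $m$-th moment.

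The main step, and the only nontrivial one, is the uniform bound on each scalar moment. Here I would use Rosenthal's inequality (equivalently the Marcinkiewicz--Zygmund inequality): for i.i.d.\ centered $V_l$ with $\E|V_1|^m<+\infty$ and $m\geq 2$,
$$
\E\left|\sum_{l=1}^n V_l\right|^m \leq C_{\sup}\left(n\,\E|V_1|^m + \left(n\,\E(V_1^2)\right)^{m/2}\right).
$$
Dividing by $n^{m/2}$ and using that $n^{1-m/2}\leq 1$ because $m\geq 2$, we obtain
$$
\E\left|\frac{1}{\sqrt n}\sum_{l=1}^n V_l\right|^m \leq C_{\sup}\left(\E|V_1|^m + \E(V_1^2)^{m/2}\right),
$$
a bound that is finite and independent of $n$. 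Applying this with $V_l = U_j^{(l)}-\mu_j$ for each $j\in[1:p]$ and each $m\in\{2,4,2k,4k\}$, then collecting the estimates, yields $\E(\|\tilde{X}^{\{n\}}\|^m)\leq C_{\sup}$ uniformly in $n$; together with the first display this proves $\E(f_n(\tilde{X}^{\{n\}})^{2i})\leq C_{\sup}$. The only delicate point is that the constant must not degrade with $n$, and this is precisely what the sign of the exponent $1-m/2$ guarantees, so no further care is required.
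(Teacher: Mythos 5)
Your proof is correct and follows essentially the same route as the paper's: bound $f_n(x)^{2i}$ by $C_{\sup}(\|x\|^{2i}+\|x\|^{2ik})$ using the subpolynomial/Lipschitz estimate, reduce to coordinatewise moments of $\tilde{X}^{\{n\}}_j$, and apply Rosenthal's inequality to the centered i.i.d.\ sum, with the exponent $1-m/2\leq 0$ ensuring the bound is uniform in $n$. The only cosmetic difference is that you pass from the Euclidean norm to coordinate moments via the power-mean inequality where the paper uses the equivalence of $\ell_2$ and $\ell_m$ norms, which is the same estimate.
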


\begin{proof}
We have
\begin{eqnarray*}
\E(f_n(\tilde{X}^{\{n\}})^{2i})& \leq & C_{\sup} \left( \E(\|\tilde{X}^{\{n\}}\|^{2 i  k})+\E(\|\tilde{X}^{\{n\}}\|^{2 i}) \right)\\
& \leq & C_{\sup} \left( \E(\|\tilde{X}^{\{n\}}\|_{2ik}^{2ik})+\E(\|\tilde{X}^{\{n\}}\|_{2i}^{2i}) \right).
\end{eqnarray*}
Now, by Rosenthal inequality \cite{rosenthal1970subspaces}, we have 
\begin{eqnarray*}
\E(|\tilde{X}_j|^{2ik}) &=& \frac{1}{n^{ik}}\E\left[\left(\sum_{l=1}^n U_j^{(l)}-\mu_j\right)^{2ik}\right] \\
&\leq & \frac{C_{\sup}}{n^{ik}}\max\left(  n \E([U_j^{(1)}-\mu_j]^{2ik}), \left[ n \E([U_j^{(1)}-\mu_j]^2) \right]^{ik}\right) \\
&\leq& C_{\sup}.
\end{eqnarray*}

\end{proof}

\begin{lm}\label{lm_approx_gaussian_indicatrice}
For all $v \subset [1:p]$, $v\neq \emptyset$ and for $i=1,2$, we have
$$
\sup_{n} \E\left(f_n(\tilde{X}^{\{n\}})^i\mathds{1}_{\tilde{X}^{\{n\}}_v \notin [-K,K]^{|v|} }\right)\underset{K \to +\infty}{\longrightarrow}0.
$$
\end{lm}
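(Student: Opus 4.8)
The plan is to decouple the magnitude of $f_n$ from the tail event by the Cauchy--Schwarz inequality, and then to control the tail event uniformly in $n$ using the fact that the coordinatewise second moment of $\tilde{X}^{\{n\}}$ does not depend on $n$.

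First, for any $v\neq \emptyset$ and $i\in\{1,2\}$, I would apply Cauchy--Schwarz to write
$$
\E\left(f_n(\tilde{X}^{\{n\}})^i\mathds{1}_{\tilde{X}^{\{n\}}_v \notin [-K,K]^{|v|}}\right)
\leq \sqrt{\E\left(f_n(\tilde{X}^{\{n\}})^{2i}\right)}\,\sqrt{\PP\left(\tilde{X}^{\{n\}}_v \notin [-K,K]^{|v|}\right)}.
$$
By Lemma \ref{lm_approx_gaussian_unif_bound}, the first factor is bounded by $\sqrt{C_{\sup}}$ uniformly in $n$ (this is exactly why that lemma was stated for both $i=1$ and $i=2$), so it remains to show that $\sup_n \PP(\tilde{X}^{\{n\}}_v \notin [-K,K]^{|v|})$ tends to $0$ as $K\to +\infty$.

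Next, I would bound the tail probability by a union bound over the coordinates of $v$, namely
$$
\PP\left(\tilde{X}^{\{n\}}_v \notin [-K,K]^{|v|}\right)\leq \sum_{j\in v}\PP\left(|\tilde{X}^{\{n\}}_j|>K\right).
$$
The key observation is that $\tilde{X}^{\{n\}}_j=\frac{1}{\sqrt{n}}\sum_{l=1}^n (U_j^{(l)}-\mu_j)$ is a normalized sum of i.i.d. centred variables, so $\V(\tilde{X}^{\{n\}}_j)=\V(U_j)$ does not depend on $n$ (and is finite since $\E(\|U\|^2)<+\infty$). Chebyshev's inequality then gives $\PP(|\tilde{X}^{\{n\}}_j|>K)\leq \V(U_j)/K^2$, whence
$$
\sup_n \PP\left(\tilde{X}^{\{n\}}_v \notin [-K,K]^{|v|}\right)\leq \frac{1}{K^2}\sum_{j\in v}\V(U_j)\leq \frac{C_{\sup}}{K^2}.
$$
Combining the two bounds yields $\sup_n \E(f_n(\tilde{X}^{\{n\}})^i\mathds{1}_{\tilde{X}^{\{n\}}_v \notin [-K,K]^{|v|}})\leq C_{\sup}/K$, which converges to $0$ as $K\to +\infty$, as required.

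The argument is essentially routine; the only point that genuinely matters—and the reason the supremum over $n$ may be taken before letting $K\to+\infty$—is the uniform-in-$n$ tail control of $\tilde{X}^{\{n\}}_j$, which holds precisely because the rescaling $\sqrt{n}(\widehat{X}^{\{n\}}-\mu)$ stabilizes the variance at $\V(U_j)$, independently of $n$. One could instead bound $f_n$ pointwise via Lemma \ref{lm_gaussian_approx1} and integrate against the tail, but the Cauchy--Schwarz route is cleaner since the needed moment bound is already supplied by Lemma \ref{lm_approx_gaussian_unif_bound}.
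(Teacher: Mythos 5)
Your proof is correct and follows essentially the same route as the paper: the same Cauchy--Schwarz decoupling, the same appeal to Lemma \ref{lm_approx_gaussian_unif_bound} for the uniform moment bound, and then a uniform-in-$n$ tail bound for $\tilde{X}^{\{n\}}_v$. The only difference is in that last step, where the paper deduces $\sup_n \PP(\|\tilde{X}^{\{n\}}_v\|\geq K)\to 0$ from tightness of the weakly convergent sequence $(\tilde{X}^{\{n\}}_v)_n$, while you obtain it by Chebyshev's inequality using $\V(\tilde{X}^{\{n\}}_j)=\V(U_j)$; both are valid, and yours has the minor advantage of being quantitative (an explicit $C_{\sup}/K$ rate) and of not invoking the central limit theorem.
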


\begin{proof}
We have
\begin{eqnarray*}
& & \E\left(f_n(\tilde{X}^{\{n\}})^i\mathds{1}_{\tilde{X}^{\{n\}}_v \notin [-K,K]^{|v|} }\right)\\
& \leq & \sqrt{ \E\left(f_n(\tilde{X}^{\{n\}})^{2i} \right)}  \sqrt{\PP( \tilde{X}^{\{n\}}_v \notin [-K,K]^{|v|} )}.
\end{eqnarray*}
By Lemma \ref{lm_approx_gaussian_unif_bound}, $\sup_n \sqrt{ \E\left(f_n(\tilde{X}^{\{n\}})^{2i} \right)} $ is bounded.

Now, since $(\tilde{X}^{\{n\}}_v)_n$ converges in distribution, it is a tight sequence, hence
$$
 \sup_n\PP\left( \tilde{X}_v^{\{n\}}\notin [-K,K]^{|v|}\right)\leq \sup_n \PP(\|\tilde{X}_v^{\{n\}}\|\geq K) \underset{K \to +\infty}{\longrightarrow} 0.
$$
\end{proof}

\begin{lm}\label{lm_approx_gaussian_pointwise}
The sequence $(f_n)_n$ converges pointwise to $Df(\mu)$.
\end{lm}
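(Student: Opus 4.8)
The plan is to prove this directly from the differentiability of $f$ at $\mu$, since $f_n$ is, up to the factor $\sqrt{n}$, exactly a rescaled increment of $f$ around $\mu$. Recall that $f_n(x)=\sqrt{n}\left(f\left(\frac{x}{\sqrt{n}}+\mu\right)-f(\mu)\right)$, so pointwise convergence to $Df(\mu)$ should reduce to the first-order expansion of $f$ at $\mu$.

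First I would fix an arbitrary $x\in\R^p$. Since $f$ is $\mathcal{C}^1$ (indeed $\mathcal{C}^3$) on a neighbourhood of $\mu$ and $\frac{x}{\sqrt{n}}\to 0$, there exists $N$ (depending on $x$) such that for all $n\geq N$ the point $\frac{x}{\sqrt{n}}+\mu$ lies in that neighbourhood and the differential $Df$ makes sense there. I would then invoke the definition of differentiability of $f$ at $\mu$: writing $h_n:=\frac{x}{\sqrt{n}}$, we have
$$
f(\mu+h_n)-f(\mu)=Df(\mu)(h_n)+\|h_n\|\,\varepsilon(h_n),
$$
where $\varepsilon(h)\longrightarrow 0$ as $h\to 0$.

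Multiplying this identity by $\sqrt{n}$ and using the linearity of $Df(\mu)$, so that $\sqrt{n}\,Df(\mu)\!\left(\frac{x}{\sqrt{n}}\right)=Df(\mu)(x)$, together with $\sqrt{n}\,\|h_n\|=\|x\|$, I would obtain
$$
f_n(x)=Df(\mu)(x)+\|x\|\,\varepsilon\!\left(\tfrac{x}{\sqrt{n}}\right).
$$
Since $\frac{x}{\sqrt{n}}\to 0$ for fixed $x$, the remainder term $\|x\|\,\varepsilon\!\left(\frac{x}{\sqrt{n}}\right)$ tends to $0$, and hence $f_n(x)\to Df(\mu)(x)$. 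As $x$ was arbitrary, this gives pointwise convergence of $(f_n)_n$ to the linear function $Df(\mu)$.

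There is essentially no serious obstacle here: the statement is precisely the observation that the rescaled increment $\sqrt{n}\left(f(\mu+x/\sqrt{n})-f(\mu)\right)$ is a difference quotient converging to the directional derivative $Df(\mu)(x)$, i.e.\ it is the defining property of the differential. The only point requiring a line of care is verifying that $\frac{x}{\sqrt{n}}+\mu$ enters the differentiability neighbourhood of $\mu$ for $n$ large, which is immediate since $\mu^{\{n\}}$ plays no role here and $x$ is held fixed while $\frac{1}{\sqrt{n}}\to 0$.
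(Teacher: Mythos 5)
Your proof is correct and follows essentially the same route as the paper: expand $f$ at $\mu$, multiply by $\sqrt{n}$, and observe that the remainder vanishes for fixed $x$. The only (immaterial) difference is that the paper uses the $O(\|h\|^2)$ Taylor remainder available from the $\mathcal{C}^2$ regularity, giving the explicit rate $O(\|x\|^2/\sqrt{n})$, whereas you use the weaker $o(\|h\|)$ form from the definition of differentiability, which still suffices for pointwise convergence.
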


\begin{proof}
For all $x \in \R$,
$$
 f \left(\frac{x}{\sqrt{n}} +\mu \right)- f (\mu) =Df\left( \mu\right) \frac{x}{\sqrt{n}}+ O\left( \left\| \frac{x}{\sqrt{n}}\right\|^2 \right),
$$
so,
$$
f_n(x)=Df\left( \mu \right) x+ O\left( \frac{\|x\|^2}{\sqrt{n}} \right).
$$
\end{proof}

\subsection{Part 2}\label{section_terme1}
We want to prove that, for all $u \subset[1:p]$, $u\neq \emptyset$, we have
$$
\sup _n |E_{u,n,K}(\tilde{X}^{\{n\}})-E_{u,n}(\tilde{X}^{\{n\}})| \underset{K \to +\infty}{\longrightarrow}0.
$$

We will prove this result for $\emptyset \varsubsetneq u \varsubsetneq [1:p]$, since it is easier for $u= [1:p]$ (see Remark \ref{rmk_u=[1:p]}).\\

We have
\begin{eqnarray*}
& & \Bigg| \int_{\R^{|u|}}\left( \int_{\R^{|-u|}}  f_n(x)  d\PP_{\tilde{X}^{\{n\}}_{-u}|\tilde{X}^{\{n\}}_u=x_u}(x_{-u}) \right)^2 d\PP_{\tilde{X}^{\{n\}}_u}(x_u) \\
& & -  \int_{[-K,K]^{|u|}}\left( \int_{[-K,K]^{|-u|}}  f_n(x) d\PP_{\tilde{X}^{\{n\}}_{-u}|\tilde{X}^{\{n\}}_u=x_u}(x_{-u}) \right)^2 d\PP_{\tilde{X}^{\{n\}}_u}(x_u) \Bigg| \\
&\leq & \int_{([-K,K]^{|u|})^c} \left( \int_{\R^{|-u|}}  f_n(x)  d\PP_{\tilde{X}^{\{n\}}_{-u}|\tilde{X}^{\{n\}}_u=x_u}(x_{-u}) \right)^2 d\PP_{\tilde{X}^{\{n\}}_u}(x_u) \\
& &+ \int_{[-K,K]^{|u|}}\Bigg|\left( \int_{\R^{|-u|}}  f_n(x) d\PP_{\tilde{X}^{\{n\}}_{-u}|\tilde{X}^{\{n\}}_u=x_u}(x_{-u}) \right)^2\\
 & & -\left( \int_{[-K,K]^{|-u|}}  f_n(x) d\PP_{\tilde{X}^{\{n\}}_{-u}|\tilde{X}^{\{n\}}_u=x_u}(x_{-u}) \right)^2  \Bigg|  d\PP_{\tilde{X}^{\{n\}}_u}(x_u). 
\end{eqnarray*}
We have to bound the two summands of the previous upper-bound.

The first term converges to $0$ by Lemma \ref{lm_approx_gaussian_indicatrice}. Let us bound the second term.
By mean-value inequality with the square function, we have
\begin{eqnarray*}
& &\int_{[-K,K]^{|u|}} \Bigg|\left( \int_{\R^{|-u|}}  f_n(x) d\PP_{\tilde{X}^{\{n\}}_{-u}|\tilde{X}^{\{n\}}_u=x_u}(x_{-u}) \right)^2\\
& & -\left( \int_{[-K,K]^{|-u|}}  f_n(x) d\PP_{\tilde{X}^{\{n\}}_{-u}|\tilde{X}^{\{n\}}_u=x_u}(x_{-u}) \right)^2  \Bigg| d\PP_{\tilde{X}^{\{n\}}_u}(x_u) \\
& \leq & 2 \int_{[-K,K]^{|u|}} \left( \int_{\R^{|-u|}}  |f_n(x)| d\PP_{\tilde{X}^{\{n\}}_{-u}|\tilde{X}^{\{n\}}_u=x_u}(x_{-u}) \right) \\
& & \left| \int_{\R^{|-u|}} \mathds{1}_{x_{-u}\notin [-K,K]^{|-u|}}  f_n(x) d\PP_{\tilde{X}^{\{n\}}_{-u}|\tilde{X}^{\{n\}}_u=x_u}(x_{-u})  \right| d\PP_{\tilde{X}^{\{n\}}_u}(x_u)\\
& \leq &2 \int_{[-K,K]^{|u|}} \left( \int_{\R^{|-u|}}  |f_n(x)| d\PP_{\tilde{X}^{\{n\}}_{-u}|\tilde{X}^{\{n\}}_u=x_u}(x_{-u}) \right)\\
& & \times \left(\int_{\R^{|-u|}} \mathds{1}_{x_{-u}\notin [-K,K]^{|-u|}}  |f_n(x)| d\PP_{\tilde{X}^{\{n\}}_{-u}|\tilde{X}^{\{n\}}_u=x_u}(x_{-u}) \right)  d\PP_{\tilde{X}^{\{n\}}_u}(x_u)\\
& \leq & 2\sqrt{\E(\E(|f_n(\tilde{X}^{\{n\}})| \;| \tilde{X}^{\{n\}}_u)^2) }\\ & &  
\times \sqrt{\int_{\R^{|u|}}\left(\int_{\R^{|-u|}} \mathds{1}_{x_{-u}\notin [-K,K]^{|-u|}} |f_n(x)| d\PP_{\tilde{X}^{\{n\}}_{-u}|\tilde{X}^{\{n\}}_u=x_u}(x_{-u})  \right)^2 d\PP_{\tilde{X}^{\{n\}}_u}(x_u)}.
\end{eqnarray*}
Now, $\E(\E(|f_n(\tilde{X}^{\{n\}})| \;| \tilde{X}^{\{n\}}_u)^2) \leq \E(f_n(\tilde{X}^{\{n\}})^2) $ which is bounded by Lemma \ref{lm_approx_gaussian_unif_bound} and the other term converges to $0$ uniformly on $n$ by Lemma \ref{lm_approx_gaussian_indicatrice}. 

\begin{rmk}\label{rmk_u=[1:p]}
In the case where $u=[1:p]$, it is much simpler, since
\begin{eqnarray*}
\E(f_n(\tilde{X}^{\{n\}})^2)-\E(f_n(\tilde{X}^{\{n\}})^2\mathds{1}_{\tilde{X}^{\{n\}} \in [-K,K]^p})= \E(f_n(\tilde{X}^{\{n\}})^2\mathds{1}_{\tilde{X}^{\{n\}} \notin [-K,K]^p}),
\end{eqnarray*}
which converges to 0 uniformly on $n$ when $K\to +\infty$ by Lemma \ref{lm_approx_gaussian_indicatrice}.
\end{rmk}

\subsection{Part 3}\label{section_terme2}
Let $K \in \R_+^*$ and $u\subset [1:p]$ such that $u\neq \emptyset$.
We want to prove that 
$$
|E_{u,n,K}(X^*)-E_{u,n,K}(\tilde{X}^{\{n\}})|\underset{n \to +\infty}{\longrightarrow} 0.
$$

The case $u=[1:p]$ is much easier (see Remark \ref{rmk_u=[1:p]2}), hence, assume that $\emptyset \varsubsetneq u \varsubsetneq [1:p]$.
Since $K$ is fixed, the probability density function $f_{X^*}$ of $X^*$ is lower-bounded by $a>0$ on $[-K,K]^p$. Let $\varepsilon_n:=\max_{\emptyset \varsubsetneq u\subset[1:p]}\sup_{x\in \R^p}|f_{X^*}(x)-f_{\tilde{X}^{\{n\}}}(x)|$. Using local limit theorem (see Theorem 19.1 of \cite{bhattacharya1986normal} or \cite{shervashidze1971uniform}), $\varepsilon_n \underset{n \to +\infty}{\longrightarrow}0$. We assume that $n$ is large enough such that $\varepsilon_n\leq \frac{a}{2}$. Let $b< +\infty$ be the maximum of $f_{X^*}$.

We have
\begin{eqnarray*}
& & |E_{u,n,K}(X^*)-E_{u,n,K}(\tilde{X}^{\{n\}})|\\
& \leq & \int_{[-K,K]^{|u|}} \Bigg| \left( \int_{[-K,K]^{|-u|}} f_n(x) \frac{f_{X^*}(x)}{f_{X_u^*}(x_u)}dx_{-u} \right)^2 \\
& & - \left( \int_{[-K,K]^{|-u|}} f_n(x) \frac{f_{\tilde{X}^{\{n\}}}(x)}{f_{\tilde{X}_u^{\{n\}}}(x_u)}dx_{-u} \right)^2 \Bigg| f_{X_u^*}(x_u) dx_u\\
&  &+  \int_{[-K,K]^{|u|}}  \left( \int_{[-K,K]^{|-u|}} f_n(x) \frac{f_{\tilde{X}^{\{n\}}}(x)}{f_{\tilde{X}_u^{\{n\}}}(x_u)}dx_{-u} \right)^2 |f_{X_u^*}(x_u)-f_{\tilde{X}_u^{\{n\}}}(x_u)| dx_u .
\end{eqnarray*}
Hence, we have to prove the convergence of the two summands in the previous upper-bound. For the second term, it suffices to remark that
$$
|f_{X_u^*}(x_u)-f_{\tilde{X}_u^{\{n\}}}(x_u)|\leq \varepsilon_n\leq \frac{2\varepsilon_n}{a}f_{\tilde{X}_u^{\{n\}}}(x_u).
$$
Hence, the second term is smaller than $\frac{2\varepsilon_n}{a}\E(f_n(\tilde{X}^{\{n\}})^2)$ that converges to 0. It remains to prove that the first term converges to $0$. By mean-value inequality, we have 
\begin{eqnarray*}
& & \int_{[-K,K]^{|u|}} \Bigg| \left( \int_{[-K,K]^{|-u|}} f_n(x) \frac{f_{X^*}(x)}{f_{X_u^*}(x_u)}dx_{-u} \right)^2 \\
& & - \left( \int_{[-K,K]^{|-u|}} f_n(x) \frac{f_{\tilde{X}^{\{n\}}}(x)}{f_{\tilde{X}^{\{n\}}_u}(x_u)}dx_{-u} \right)^2 \Bigg| f_{X_u^*}(x_u) dx_u\\
& \leq & 2\int_{[-K,K]^{|u|}}\left( \int_{[-K,K]^{|-u|}} |f_n(x)| \max\left(\frac{f_{X^*}(x)}{f_{X_u^*}(x_u)}, \frac{f_{\tilde{X}^{\{n\}}}(x)}{f_{\tilde{X}^{\{n\}}_u}(x_u)}\right)dx_{-u} \right)\\
& & \times \left( \int_{[-K,K]^{|-u|}} |f_n(x)|  \left|\frac{f_{X^*}(x)}{f_{X_u^*}(x_u)}- \frac{f_{\tilde{X}^{\{n\}}}(x)}{f_{\tilde{X}^{\{n\}}_u}(x_u)}\right|dx_{-u} \right)f_{X_u^*}(x_u) dx_u.
\end{eqnarray*}
Now,
\begin{eqnarray*}
\left|\frac{f_{X^*}(x)}{f_{X_u^*}(x_u)}- \frac{f_{\tilde{X}^{\{n\}}}(x)}{f_{\tilde{X}^{\{n\}}_u}(x_u)}\right| & \leq & \frac{|f_{X^*}(x)-f_{\tilde{X}^{\{n\}}}(x) |}{f_{X_u^*}(x_u)}+ f_{\tilde{X}^{\{n\}}}(x) \left|\frac{1}{f_{X_u^*}(x_u)}- \frac{1}{ f_{\tilde{X}^{\{n\}}_u}(x_u)}\right|\\
& \leq & \frac{|f_{X^*}(x)-f_{\tilde{X}^{\{n\}}}(x) |}{f_{X_u^*}(x_u)}+ f_{\tilde{X}^{\{n\}}}(x) \frac{4}{a^2} \left| f_{X_u^*}(x_u)-  f_{\tilde{X}^{\{n\}}_u}(x_u)\right|\\
& \leq & \frac{\varepsilon_n}{f_{X_u^*}(x_u)}+f_{\tilde{X}^{\{n\}}}(x) \frac{4}{a^2} \varepsilon_n\\
& \leq & \frac{\varepsilon_n}{f_{X_u^*}(x_u)}+f_{X^*}(x) \frac{8}{a^2} \varepsilon_n\\
& \leq & \frac{\varepsilon_n}{a}\frac{f_{X^*}(x)}{f_{X_u^*}(x_u)}+ \frac{8b}{a^2}\varepsilon_n \frac{f_{X^*}(x)}{f_{X_u^*}(x_u)}\\
&\leq & C_{\sup} \varepsilon_n  \frac{f_{X^*}(x)}{f_{X_u^*}(x_u)}.
\end{eqnarray*}
Hence, for $n$ large enough such that $ C_{\sup} \varepsilon_n \leq 1$, we have
\begin{eqnarray*}
& & \int_{[-K,K]^{|u|}} \Bigg| \left( \int_{[-K,K]^{|-u|}} f_n(x) \frac{f_{X^*}(x)}{f_{X_u^*}(x_u)}dx_{-u} \right)^2 \\
& & - \left( \int_{[-K,K]^{|-u|}} f_n(x) \frac{f_{\tilde{X}^{\{n\}}}(x)}{f_{\tilde{X}^{\{n\}}_u}(x_u)}dx_{-u} \right)^2 \Bigg| f_{X_u^*}(x_u) dx_u\\
& \leq & 2\int_{[-K,K]^{|u|}}\left( \int_{[-K,K]^{|-u|}} |f_n(x)| 2 \frac{f_{X^*}(x)}{f_{X_u^*}(x_u)}dx_{-u} \right)\\
& & \times \left( \int_{[-K,K]^{|-u|}} |f_n(x)|   C_{\sup} \varepsilon_n  \frac{f_{X^*}(x)}{f_{X_u^*}(x_u)}  dx_{-u} \right)f_{X_u^*}(x_u) dx_u\\ \\
& \leq & C_{\sup} \varepsilon_n \E(f_n(X^*)^2),
\end{eqnarray*}
that converges to $0$.

\begin{rmk}\label{rmk_u=[1:p]2}
If $u=[1:p]$, it suffices to remark that 
$$
|f_{X^*}(x)-f_{\tilde{X}^{\{n\}}}(x)|\leq \varepsilon_n\leq \frac{\varepsilon_n}{a}f_{X^*}(x).
$$
Thus,
\begin{eqnarray*}
& & |E_{u,n,K}(X^*)-E_{u,n,K}(\tilde{X}^{\{n\}})|\\
& \leq & \int_{[-K,K]^p} f_n(x)^2 |f_{X^*}(x)-f_{\tilde{X}^{\{n\}}}(x)| dx\\
& \leq & \frac{\varepsilon_n}{a} \E(f_n(X^*)^2)\\
& \leq & C_{\sup} \varepsilon_n.
\end{eqnarray*}
\end{rmk}

\subsection{Part 4}\label{section_term3}
Let us prove that
$$
\E(f_n(\tilde{X}^{\{n\}}))-\E(f_n(X^*))\longrightarrow 0
$$
By lemma \ref{lm_approx_gaussian_indicatrice}, we have
$$
\sup_n\left| \E(f_n(\tilde{X}^{\{n\}})- \E(f_n(\tilde{X}^{\{n\}})\mathds{1}_{\tilde{X}^{\{n\}} \in [-K,K]^p}) \right| \underset{K \to \infty}{\longrightarrow}0
$$
Let $\varepsilon>0$ and let $K$ such that
$$
\sup_n\left| \E(f_n(\tilde{X}^{\{n\}})- \E(f_n(\tilde{X}^{\{n\}})\mathds{1}_{\tilde{X}^{\{n\}} \in [-K,K]^p}) \right|< \frac{\varepsilon}{3}
$$
and
$$
\sup_n\left| \E(f_n({X^*})- \E(f_n({X^*})\mathds{1}_{X^* \in [-K,K]^p}) \right| < \frac{\varepsilon}{3}.
$$
By local limit theorem, we have
$$
\left| \E(f_n(\tilde{X}^{\{n\}})\mathds{1}_{\tilde{X}^{\{n\}} \in [-K,K]^p}) - \E(f_n({X^*})\mathds{1}_{X^* \in [-K,K]^p})\right| \underset{n \to +\infty}{\longrightarrow}0.
$$

Thus, for all $u \subset [1:p]$, we have
$$
\V(\E(f_n(\tilde{X}^{\{n\}})|\tilde{X}^{\{n\}}_u))- \V(\E(f_n(X^*)|X^*_u))\underset{n \to +\infty}{\longrightarrow}0.
$$

\subsection{Conclusion}\label{section_approx_gaussien_conclu}

To prove the convergence of the Shapley effects, it suffices to prove the $\V(f_n(X^*))$ is lower-bounded. Hence, we show that $\V(f_n(X^*))$ converges to $\V(Df(\mu) X^*)$. Let $i=1,2$ and let $\varepsilon>0$. By Lemma \ref{lm_approx_gaussian_indicatrice}, let $K$ such that 
$$
\sup_n\E(f_n(X^*)^i\mathds{1}_{X^* \notin [-K,K]^p})\leq \frac{\varepsilon}{3},\;\;\E([Df(\mu)X^*]^i\mathds{1}_{X^* \notin [-K,K]^p})\leq \frac{\varepsilon}{3}.
$$
By Lemmas \ref{lm_gaussian_approx1} and \ref{lm_approx_gaussian_pointwise} and by dominated convergence theorem, we have :
$$
\E(f_n(X^*)^i\mathds{1}_{X^* \in [-K,K]^p})\underset{n\to +\infty}{\longrightarrow}\E([Df(\mu)X^*]^i\mathds{1}_{X^* \in [-K,K]^p}).
$$
Hence, $\V(f_n(X^*))$ converges to $\V(Df(\mu) X^*)$. Thus, for all $u\subset [1:p]$
$$
S_u^{cl}(\tilde{X}^{\{n\}},f_n)- S_u(X^*,f_n) \underset{n \to +\infty}{\longrightarrow}0.
$$
Hence,
$$
\left\| \eta(\tilde{X}^{\{n\}},f_n)- \eta(X,f_n)  \right\|\underset{n \to +\infty}{\longrightarrow}0.
$$
\subsection{Proof of Corollary \ref{corol_approx_gaussien}}

Since $\widehat{X}^{\{n'\}\prime}\overset{a.s}{\underset{n\to +\infty}{\longrightarrow}}\mu$ and $\widehat{\Sigma}^{\{n''\}\prime}\overset{a.s}{\underset{n\to +\infty}{\longrightarrow}}\Sigma$, it suffices to prove that, if $(x^{\{n\}})_n$ converges to $\mu$, and $(\Sigma^{\{n\}})_n$ converges to $\Sigma$, we have
$$
\left\|\eta(\widehat{X}^{\{n\}},f)- \eta(X^{*n},\tilde{f}_{1,h^{\{n\}},x^{\{n\}}}^{\{n\}})\right\| \underset{n \to +\infty}{\longrightarrow}0,
$$
where $X^{*n}$ is a random vector with distribution $\mathcal{N}(\mu,\Sigma^{\{n\}})$.
Let $(x^{\{n\}})_n$ and $(\Sigma^{\{n\}})_n$ be such sequences. Recall that
$$
\left\| \eta(\tilde{X}^{\{n\}},f_n)- \eta(X^*,f_n)  \right\|\underset{n \to +\infty}{\longrightarrow}0,
$$
where $X^*\sim \mathcal{N}(0,\Sigma)$, that is
$$
\left\| \eta\left(  \widehat{X}^{\{n\}}, f\right) -\eta\left(  X^{\{n\}}, f\right)   \right\| \underset{n \to +\infty}{\longrightarrow}0,
$$
where $X^{\{n\}}\sim \mathcal{N}(\mu, \frac{1}{n}\Sigma)$.
Hence, we have to prove that
$$
\left\|\eta(X^{\{n\}},f)- \eta(X^{*n},\tilde{f}_{1,h^{\{n\}},x^{\{n\}}}^{\{n\}})\right\| \underset{n \to +\infty}{\longrightarrow}0.
$$
By Propositions \ref{prop_linear1} and Proposition \ref{prop_linear2}, remark that $\eta(X^{\{n\}},f)$ converges to $\eta(X^*,f_1)$. Moreover, 
$$
\eta(X^{*n},\tilde{f}_{1,h^{\{n\}},x^{\{n\}}}^{\{n\}})= \eta(X^{*n}+x^{\{n\}}-\mu^{\{n\}},\tilde{f}_{1,h^{\{n\}},x^{\{n\}}}^{\{n\}})\underset{n\to +\infty}{\longrightarrow}\eta(X^*,f_1),
$$
by Corollary \ref{corol_linear_Dh1}, that concludes the proof.
\end{document}